\newtheorem{theorem}{Theorem}[section]
\newtheorem{corollary}[theorem]{Corollary}
\newtheorem{lemma}[theorem]{Lemma}
\newtheorem{proposition}[theorem]{Proposition}
\theoremstyle{definition}
\newtheorem{definition}[theorem]{Definition}
\newtheorem{remark}[theorem]{Remark}
\newtheorem{example}[theorem]{Example}
\theoremstyle{remark}
\renewcommand{\theclaim}{\textup{\theclaim}}
\newtheorem*{acknowledgements}{Acknowledgements}
\numberwithin{equation}{section}
\def\openone
\newbox\ipbox
\newcommand{\ip}[2]{\left\langle #1\, , \,#2\right\rangle}
\newcommand{\diracb}[1]{\left\langle #1\mathrel{\mathchoice

{\setbox\ipbox=\hbox{$\displaystyle \left\langle\mathstrut
#1\right.$}

\vrule height\ht\ipbox width0.25pt depth\dp\ipbox}

{\setbox\ipbox=\hbox{$\textstyle \left\langle\mathstrut
#1\right.$}

\vrule height\ht\ipbox width0.25pt depth\dp\ipbox}

{\setbox\ipbox=\hbox{$\scriptstyle \left\langle\mathstrut
#1\right.$}

\vrule height\ht\ipbox width0.25pt depth\dp\ipbox}

{\setbox\ipbox=\hbox{$\scriptscriptstyle \left\langle\mathstrut
#1\right.$}

\vrule height\ht\ipbox width0.25pt depth\dp\ipbox}

}\right. }
\newcommand{\dirack}[1]{\left. \mathrel{\mathchoice

{\setbox\ipbox=\hbox{$\displaystyle \left.\mathstrut
#1\right\rangle$}

\vrule height\ht\ipbox width0.25pt depth\dp\ipbox}

{\setbox\ipbox=\hbox{$\textstyle \left.\mathstrut
#1\right\rangle$}

\vrule height\ht\ipbox width0.25pt depth\dp\ipbox}

{\setbox\ipbox=\hbox{$\scriptstyle \left.\mathstrut
#1\right\rangle$}

\vrule height\ht\ipbox width0.25pt depth\dp\ipbox}

{\setbox\ipbox=\hbox{$\scriptscriptstyle \left.\mathstrut
#1\right\rangle$}

\vrule height\ht\ipbox width0.25pt depth\dp\ipbox}

} #1\right\rangle}
\newcommand{\beq}{\begin{equation}}
\newcommand{\eeq}{\end{equation}}
\newcommand{\nul}{\textbf{null}}
\newcommand{\cj}[1]{\overline{#1}}
\newcommand{\bz}{\mathbb{Z}}
\newcommand{\br}{\mathbb{R}}
\newcommand{\bc}{\mathbb{C}}
\newcommand{\bn}{\mathbb{N}}
\newcommand{\arr}[1]{\stackrel{#1}{\rightarrow}}
\def\blfootnote{\xdef\@thefnmark{}\@footnotetext}
\renewcommand{\mod}{\operatorname{mod}}
\newcommand{\Span}{\overline{\operatorname*{span}}}
\def\N{\mathbb{N}}
\def\-{^{-1}}
\def\ty{\emptyset}
\def\C{\mathbb{C}}
\def\Z{\mathbb{Z}}
\begin{document}

\title[Cuntz algebras and random walks]{Representations of Cuntz algebras associated to random walks on graphs}
\author{Dorin Dutkay}
\address{[Dorin Ervin Dutkay] University of Central Florida\\
	Department of Mathematics\\
	4000 Central Florida Blvd.\\
	P.O. Box 161364\\
	Orlando, FL 32816-1364\\
U.S.A.\\} \email{Dorin.Dutkay@ucf.edu}

\author{Nicholas Christoffersen}
\address{[Nicholas Christoffersen] Louisiana State University\\
	Department of Mathematics\\
    Baton Rouge, LA 70803-4918\\
U.S.A.\\} \email{nchri15@lsu.edu}
\thanks{}
\subjclass[2010]{46L05,47A67,05C81}
\keywords{Cuntz algebras, random walks}

\begin{abstract}
Motivated by the harmonic analysis of self-affine measures, we introduce a class of representations of the Cuntz algebra associated to random walks on graphs. The representations are constructed using the dilation theory of row coisometries. We study these representations, their commutant and the intertwining operators. 
\end{abstract}
\maketitle \tableofcontents
\section{Introduction}

\begin{definition}\label{de1.1}
Let $\Lambda$ be a finite set of cardinality $|\Lambda|=N$, $N\in\bn$, $N\geq 2$. A representation of the {\it Cuntz algebra} $\mathcal O_N$, is a family of $N$ isometries $(S_{\lambda})_{\lambda\in\Lambda}$ on a Hilbert space $\mathcal H$, such that the isometries have mutually orthogonal ranges whose sum is the entire space $\mathcal H$. These properties are expressed in the Cuntz relations 
\begin{equation}
S_{\lambda}^*S_{\lambda'}=\delta_{\lambda\lambda'}I_{\mathcal H}, \quad(\lambda,\lambda'\in\Lambda),\quad \sum_{\lambda\in\Lambda}S_{\lambda}S_{\lambda}^*=I_{\mathcal H}.
\label{eqde1.1.1}
\end{equation}
\end{definition}

The Cuntz algebra was introduced in \cite{Cun77} as a simple, purely infinite $C^*$-algebra. The representation theory of Cuntz algebras is extremely rich, unclassifiable even \cite{Gli60,Gli61}. 
The representations of Cuntz algebras have been proved to have applications in mathematical physics \cite{BrJo02, Bur04, GN07, AK08, Kaw03, Kaw06, Kaw09, KHL09,MR2821778,MR3024468,MR3428850,MR4065254}, in wavelets \cite{MR1743534,MR2063755,MR2277210,DuJo08a, DuJo07a, Jor06, Jor01}, harmonic analysis \cite{ DuJo9a, DuJo07b,DuJo12}, and in fractal geometry \cite{DuJo06a, DuJo11a}. 

We will introduce here a class of representations of the Cuntz algebra, associated to a random walk. The construction will follow two major steps: from a random walk, one constructs a row coisometry; then, from the row coisometry, using dilation theory, one obtains a representation of the Cuntz algebra.

\begin{definition}\label{de1.2}
A family of operators $(V_{\lambda})_{\lambda\in\Lambda}$ on a Hilbert space $\mathcal K$ is called a {\it row coisometry}, if 
\begin{equation}
\sum_{\lambda\in\Lambda}V_{\lambda}V_{\lambda}^*=I_{\mathcal K}.
\label{eqde1.2.1}
\end{equation}
\end{definition}

It is known \cite{Pop89,BJK00} that every row coisometry can be dilated to a representation of the Cuntz algebra. More precisely

\begin{theorem}\label{th1.1}\cite[Theorem 5.1]{BJK00}
Let $\mathcal K$ be a Hilbert space and $(V_{\lambda})_{\lambda\in\Lambda}$ be a row coisometry on $\mathcal K$. Then $\mathcal K$ can be embedded into a larger Hilbert space $\mathcal H=\mathcal H_V$ carrying a representation $(S_{\lambda})_{\lambda\in\Lambda}$ of the Cuntz algebra $\mathcal O_N$ such that, if $P:\mathcal H\rightarrow\mathcal K$ is the projection onto $\mathcal K$, we have
\begin{equation}
V_{\lambda}^*=S_{\lambda}^*P,
\label{eqth1.1.1}
\end{equation}
(i.e., $S_{\lambda}^*\mathcal K\subset \mathcal K$ and $S_{\lambda}^*P=PS_{\lambda}^*P=V_{\lambda}^*$) and $\mathcal K$ is cyclic for the representation. 

The system $(\mathcal H, (S_{\lambda})_{\lambda\in\Lambda},P)$ is unique up to unitary equivalence, and if $\bm\sigma:\mathcal B(\mathcal K)\rightarrow\mathcal B(\mathcal K)$ is defined by 
\begin{equation}
\bm\sigma(A)=\sum_{\lambda\in\Lambda}V_{\lambda}AV_{\lambda}^*,
\label{eqth1.1.2}
\end{equation}
then the commutant of the representation $\{S_\lambda\}_{\lambda\in\Lambda}'$ is isometrically order isomorphic to the fixed point set $\mathcal B(\mathcal K)^{\bm \sigma}=\{A\in\mathcal B(\mathcal K) : \bm\sigma(A)=A\}$, by the map $A'\mapsto PA'P$. More generally, if $(W_{\lambda})_{\lambda\in\Lambda}$ is another row coisometry on the space $\mathcal K'$, and $(T_{\lambda})_{\lambda\in\Lambda}$ is the corresponding Cuntz dilation, then there exists an isometric linear isomorphism between the intertwiners $U:\mathcal H_V\rightarrow\mathcal H_W$, i.e., operators satisfying 
\begin{equation}
US_{\lambda}=T_{\lambda}U,
\label{eqth1.1.3}
\end{equation}
and operators $V\in\mathcal B(\mathcal K,\mathcal K')$ such that 
\begin{equation}
\sum_{\lambda\in\Lambda}W_{\lambda}VV_{\lambda}^*=V,
\label{eq:th1.1.4}
\end{equation} 
given by the map $U\mapsto V=P_{\mathcal K'}UP_{\mathcal K}$. 
\end{theorem}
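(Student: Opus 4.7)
The plan is to carry out the proof in four stages, following the four assertions of the theorem.

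\textbf{Existence.} I would construct $(\mathcal{H}, (S_\lambda), P)$ by a Popescu-style dilation. One concrete model is to take the inductive limit of $\mathcal K \xrightarrow{J_1} \bigoplus_{\lambda\in\Lambda}\mathcal K \xrightarrow{J_2} \bigoplus_{\omega\in\Lambda^2}\mathcal K \xrightarrow{} \cdots$ where the connecting map $J_{n+1}$ sends the $\omega$-coordinate of a vector to its $(\omega\lambda)$-coordinate via $V_\lambda^*$. The hypothesis $\sum_\lambda V_\lambda V_\lambda^*=I$ makes each $J_n$ an isometry. On the limit $\mathcal H$, define $S_\lambda$ as the shift that promotes a vector at level $n$, supported at word $\omega$, to level $n+1$ supported at $\lambda\omega$. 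A direct check shows $S_\lambda^*S_{\lambda'}=\delta_{\lambda\lambda'}I$, and the Cuntz relation $\sum_\lambda S_\lambda S_\lambda^*=I$ again reduces to the row coisometry identity on $\mathcal K$ after transport through the limit. The identification of $\mathcal K$ with level zero is the projection $P$, and by construction $S_\lambda^*P=V_\lambda^*P$; cyclicity of $\mathcal K$ is automatic.

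\textbf{Uniqueness.} Because $\mathcal K$ is cyclic, $\mathcal H$ is the closed span of $\{S_\omega k:\omega\in\Lambda^*, k\in\mathcal K\}$. The inner products $\langle S_\omega k, S_{\omega'}k'\rangle = \langle k, S_\omega^*S_{\omega'}k'\rangle$ can be reduced via the Cuntz relations to products of $V_\lambda$'s and $V_\mu^*$'s applied within $\mathcal K$, using $S_\lambda^* P=V_\lambda^* P$ repeatedly. Hence two dilations $(\mathcal H,S_\lambda,P)$ and $(\mathcal H',S_\lambda',P')$ are identified by $S_\omega k\mapsto S_\omega' k$, which extends to a unitary.

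\textbf{Commutant.} If $A\in\{S_\lambda\}'$, then $A$ commutes with $S_\lambda^*$ as well, hence
\[
A=\Big(\sum_\lambda S_\lambda S_\lambda^*\Big)A=\sum_\lambda S_\lambda AS_\lambda^*.
\]
Compressing by $P$ and using $S_\lambda^*P=V_\lambda^*P$ together with its adjoint $PS_\lambda=V_\lambda P$ (which gives $PS_\lambda P=V_\lambda$ and $PS_\lambda^*P=V_\lambda^*$), the compression $A_0:=PAP$ satisfies $\sum_\lambda V_\lambda A_0 V_\lambda^*=A_0$, i.e. $\boldsymbol\sigma(A_0)=A_0$. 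Conversely, given $A_0$ with $\boldsymbol\sigma(A_0)=A_0$, define $\widetilde A$ on the cyclic dense set by $\widetilde A(S_\omega k):=S_\omega A_0 k$. Iterating the fixed-point condition gives $A_0=\sum_{|\omega|=n}V_\omega A_0 V_\omega^*$, and this, combined with the orthogonality $S_\omega^*S_{\omega'}=\delta_{\omega\omega'}I$ for words of the same length, yields the key identity needed to verify that $\widetilde A$ is well-defined, bounded by $\|A_0\|$, and commutes with every $S_\lambda$. The maps $A\mapsto PAP$ and $A_0\mapsto \widetilde A$ are mutual inverses, both contractive and positivity-preserving, hence give the claimed isometric order isomorphism.

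\textbf{Intertwiners.} The general intertwiner statement is obtained by exactly the same argument, with $\mathcal K'$ in place of $\mathcal K$ for the codomain. If $U$ intertwines $S_\lambda$ and $T_\lambda$, then $U=\sum_\lambda T_\lambda U S_\lambda^*$, which compressed by $P_{\mathcal K'}$ on the left and $P_\mathcal K$ on the right gives $\sum_\lambda W_\lambda V V_\lambda^*=V$ for $V=P_{\mathcal K'}UP_\mathcal K$. The reverse direction extends a solution $V$ by $\widetilde U(S_\omega k):=T_\omega Vk$ and checks well-definedness via the iterated intertwining relation $\sum_{|\omega|=n}W_\omega V V_\omega^*=V$. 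The commutant case is recovered by taking $(W_\lambda)=(V_\lambda)$. \textbf{The main obstacle} throughout is precisely this boundedness/well-definedness argument in the reverse direction: one must reduce every inner product $\langle \widetilde A\sum c_i S_{\omega_i}k_i,\sum d_j S_{\omega_j'}k_j'\rangle$ to inner products inside $\mathcal K$ via the Cuntz relations and then dominate it using the iterated fixed-point (resp. intertwining) identity — a careful but routine bookkeeping argument once the right identity is in hand.
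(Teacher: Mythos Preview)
The theorem is quoted from \cite{BJK00} and is not proved in the paper, so there is no proof here to compare your attempt against directly. That said, the paper's Proposition~2.2 records the mechanism the cited proof actually uses for the inverse of $A\mapsto PAP$, and this is precisely where your sketch goes wrong.

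Your existence and uniqueness arguments are fine and standard, and the forward direction of the commutant/intertwiner correspondence is correct. The gap is in the converse. You propose to define $\widetilde A$ on the cyclic span by $\widetilde A(S_\omega k)=S_\omega A_0 k$, and you assert that well-definedness follows from the iterated identity $A_0=\sum_{|\omega|=n}V_\omega A_0V_\omega^*$. It does not. Take $k\in\mathcal K$ and write it two ways: as $S_\emptyset k$, and (via $\sum_\lambda S_\lambda S_\lambda^*=I$ together with $S_\lambda^*|_{\mathcal K}=V_\lambda^*$) as $\sum_\lambda S_\lambda V_\lambda^*k$. Your rule gives two values,
\[
A_0k\qquad\text{and}\qquad \sum_{\lambda}S_\lambda A_0 V_\lambda^*k.
\]
Projecting the second expression to $\mathcal K$ yields $\sum_\lambda V_\lambda A_0V_\lambda^*k=\boldsymbol\sigma(A_0)k=A_0k$, but its component in $\mathcal K_1\ominus\mathcal K$ is $\sum_\lambda (I-P)S_\lambda A_0V_\lambda^*k$, and nothing in the hypothesis $\boldsymbol\sigma(A_0)=A_0$ forces this to vanish. (Unwinding further, consistency across levels would force $V_\lambda^*A_0=A_0V_\lambda^*$, i.e.\ $A_0$ commutes with every $V_\lambda$, which is strictly stronger than being $\boldsymbol\sigma$-fixed.) Equivalently: your $\widetilde A$ would send $\mathcal K$ into $\mathcal K$, whereas the genuine commutant element need not preserve $\mathcal K$, only satisfy $PAP=A_0$. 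This is exactly why Proposition~2.4 of the paper has to introduce a new product $T\ast T'$ on $\mathcal B(\mathcal K)^{\boldsymbol\sigma}$ via a limit rather than simply using $TT'$.

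The correct inverse, which is what \cite{BJK00} does and what Proposition~2.2 of the present paper makes explicit, is to set
\[
A\;=\;\mathrm{SOT}\text{-}\lim_{m\to\infty}\ \sum_{|\omega|=m}S_\omega A_0 S_\omega^{*}\;=\;\mathrm{SOT}\text{-}\lim_{m\to\infty}P_{\mathcal K_m}AP_{\mathcal K_m},
\]
and to verify that this limit exists, is bounded by $\|A_0\|$, commutes with each $S_\lambda$, and compresses back to $A_0$. The same limit construction handles the intertwiner case. Your identification of the ``main obstacle'' is accurate, but the bookkeeping you outline cannot close it; you need the limit.
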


\begin{definition}\label{def1.2}
    Given a row coisometry $(V_{\lambda})_{\lambda\in\Lambda}$ on the Hilbert space $\mathcal K$, we call the representation $(S_{\lambda})_{\lambda\in\Lambda}$ of the Cuntz algebra $\mathcal{O}_{\left| \Lambda \right|}$ in Theorem \ref{th1.1}, {\it the Cuntz dilation of the row coisometry} $(V_{\lambda})_{\lambda\in\Lambda}$.
\end{definition}

\begin{definition}\label{def1.3}
Let $\mathcal G=(\mathcal V,\mathcal E)$ be a directed graph, where the set of vertices $\mathcal V$ is finite or countable. For each edge $e\in\mathcal E$ we assume we have a label $\lambda(e)$ chosen from a finite set of labels $\Lambda$, $|\Lambda|=N$. 

We assume in addition, that given a vertex $i$, two different edges $e_1\neq e_2$ from $i$ have different labels $\lambda(e_1)\neq \lambda(e_2)$ and their end vertices are different. We use the notation $i\stackrel{\lambda}{\rightarrow}j$ to indicate that there is an edge from $i$ to $j$ with label $\lambda$, and, in this case we also write $i\cdot \lambda=j$. Thus, for each vertex $i$, there is at most one edge leaving $i$ with label $\lambda$. 

We also assume that for each vertex $i$ there is at most one edge coming into the vertex $i$ with label $\lambda$. 

For each vertex $i$ and each label $\lambda$ we assume that we have an associated complex number $\alpha_{i,\lambda}$, $\alpha_{i,\lambda}=0$ in case there is no edge from $i$ with label $\lambda$, and we assume that 
\begin{equation}
\sum_{\lambda\in\Lambda}|\alpha_{i,\lambda}|^2=1,\quad (i\in\mathcal V).
\label{eqde1.3.1}
\end{equation}

Thus, we have a random walk on the graph $\mathcal G$, the probability of transition from $i$ to $i\cdot\lambda$ with label $\lambda$ being $|\alpha_{i,\lambda}|^2$.

Here is the way we define the representation of the Cuntz algebra $\mathcal O_N$. 

First, we define the row coisometry $(V_\lambda)_{\lambda\in \Lambda}$ on $\mathcal K:=\ell^{2}[\mathcal V]$, 
\begin{equation}
V_\lambda^*(\vec i)=\left\{\begin{array}{cc}
	\alpha_{i,\lambda}\vec j,&\mbox{ if }i\stackrel{\lambda}{\rightarrow}j\\
	0,&\mbox{ otherwise.}
\end{array}\right.
\label{eqde1.3.2}
\end{equation}
(Here, the vertices $i$ in $\mathcal{V}$ are considered as the canonical basis vectors $\vec i:=\delta_i$ for $l^2[\mathcal V]$).

Then, we use Theorem \ref{th1.1}, to construct the Cuntz dilation $(S_\lambda)_{\lambda\in\Lambda}$. We call it {\it the representation of the Cuntz algebra associated to the random walk}.
\end{definition}

The paper is structured as follows: in section 2, we present some motivation from the harmonic analysis of self-affine measures and we show how our representations of the Cuntz algebra appear in that context. In section 3, we present some general properties of the Cuntz representations obtained from dilations of row coisometries. Of particular interest, it is the following fact: the commutant of the Cuntz dilation is an algebra, it has a product structure, by composition. The isometric correspondence between the commutant and the space $\mathcal B(\mathcal K)^{\bm \sigma}$, implies that we get a product structure also on this space. We make this product structure explicit in Proposition \ref{pr2.4}.

Since the general dilation theorem is a bit abstract, in section 4 we present an explicit construction of the Cuntz dilation associated to a random walk on a graph. In section 5, we study the intertwining operators between the Cuntz dilations associated to two random walks on {\it finite} graphs. Of course, in particular, this covers the case of the commutant of the Cuntz dilation. It turns out that these operators are completely determined by the {\it balanced minimal invariant sets}. Here are the definitions. 

The Cuntz representations studied in this paper fit into the more general class of isometric dilations of non-commuting finite rank $N$-tuples, investigated by Davidson, Kribs and Shpigel in \cite{DKS01}, where the authors describe the structure of the free semigroup algebra generated by the dilation and the unitary and similarity invariants of the associated representations. Here we will describe these invariants directly in terms of the properties of the random walk.

\begin{definition}\label{def1.1}
Suppose we have two directed finite graphs $(\mathcal V,\mathcal E)$ and $(\mathcal V',\mathcal E')$, with labels from the same set $\Lambda$, and with weights $(\alpha_{i,\lambda})_{i\in\mathcal V,\lambda\in\Lambda}$ and $(\alpha_{i',\lambda}')_{i'\in\mathcal V',\lambda\in\Lambda}$ respectively.

Given two pairs of vertices $(i,i'),(j,j')\in \mathcal V\times\mathcal V'$, we say that {\it the transition from $(i,i')$ to $(j,j')$ is possible, with word $\lambda=\lambda_1\dots\lambda_n$}, and we write $(i,i')\arr{\lambda} (j,j')$, if there are pairs of vertices $(i,i')=(i_0,i_0'),(i_1,i_1'),\dots,(i_n,i_n')=(j,j')$ such that the transitions $(i_{k-1},i_{k-1}')\arr{\lambda_k}(i_k,i_k')$ is possible, meaning $\alpha_{i_{k-1},\lambda_k}\neq 0$, $\alpha_{i_{k-1}',\lambda_k}'\neq 0$, for all $k=1,\dots,n$. In other words, the transition from $(i,i')$ to $(j,j')$ is possible exactly means that there are paths from $i$ to $i'$ and $j$ to $j'$ with non-zero weights, which have the same length and label. We use also the notations 
$$i\cdot (\lambda_1\dots\lambda_n)=(\dots((i\cdot\lambda_1)\cdot\lambda_2)\cdot\dots)\cdot\lambda_n,$$
$$\alpha_{i,\lambda}=\alpha_{i,\lambda_1}\alpha_{i\cdot\lambda_1,\lambda_2}\dots\alpha_{i\cdot\lambda_1\lambda_2\dots\lambda_{n-1},\lambda_n},$$
and $i\arr\lambda j$ if $j=i\cdot \lambda$ with $\alpha_{i,\lambda}\neq 0$. 

So, the transition $(i,i')\arr{\lambda}(j,j')$ is possible, with $\lambda=\lambda_1\dots\lambda_n$ if $i\arr\lambda j$, $i'\arr\lambda j'$, $\alpha_{i,\lambda}\neq 0$, and $\alpha_{i',\lambda}'\neq 0$.

A non-empty subset $\mathcal M$ of $\mathcal V\times\mathcal V'$ is called {\it invariant}, if for any $(i,i')\in\mathcal M$, and $\lambda=\lambda_1\dots\lambda_n$, if the transition $(i,i')\arr\lambda(j,j')$ is possible, then $(j,j')\in\mathcal M$. The invariant set $\mathcal M$ is called {\it minimal} if it has no proper invariant subsets.

The {\it orbit} $\mathcal O(i,i')$ of a pair of vertices $(i,i')$ is the smallest invariant set that contains $(i,i')$. 
\end{definition}

\begin{definition}\label{def4.9}
For each minimal invariant set $\mathcal M$, pick a point $(i_{\mathcal M},i'_{\mathcal M})$ in $\mathcal M$. For $(i,i')\in\mathcal V\times \mathcal V'$, define the set $F(i,i')$ of paths/words that arrive at one of the points $(i_{\mathcal M},i'_{\mathcal M})$ for the first time; that is $\lambda=\lambda_1\dots\lambda_n$, $n\geq 1$, is in $F(i,i')$ if and only if $(i\cdot\lambda,i'\cdot\lambda)=(i_{\mathcal M},i'_{\mathcal M})$ for some minimal invariant set $\mathcal M$, and $(i\cdot \lambda_1\dots\lambda_k,i'\cdot\lambda_1\dots\lambda_k)\neq (i_{\mathcal N},i'_{\mathcal N})$ for all $1\leq k<n$ and all minimal invariant sets $\mathcal N$.

We say that a minimal invariant set $\mathcal M$ is {\it balanced} if 
\begin{enumerate}
	\item[(i)] For all $(i,i')\in \mathcal M$, and for all $\lambda\in\Lambda$, $|\alpha_{i,\lambda}|=|\alpha_{i',\lambda}'|$.
	\item[(ii)] For all $(i,i')\in \mathcal M$, and all loops $\lambda=\lambda_1\dots\lambda_n$ at $(i,i')$, i.e., $(i,i')\arr\lambda (i,i')$, one has $\alpha_{i,\lambda}=\alpha_{i',\lambda}'$.
\end{enumerate}

For reasons that will be apparent in section 4, we also use the notation $(i,\ty):=\vec i=\delta_i$, for the orthonormal basis of the space $\mathcal K$.
\end{definition}

According to Theorem \ref{th1.1}, the operators that intertwine the two Cuntz dilations associated to the two random walks are in bijective correspondence with the operators in the space $B(\mathcal K,\mathcal K')^{\bm\sigma}$ which is the space of operators $T:\mathcal K\rightarrow\mathcal K'$ with 
$$
T=\sum_{\lambda\in\Lambda}V_\lambda'TV_\lambda^*=:\bm \sigma(T).
$$
The main result of section 5 describes the space $B(\mathcal K,\mathcal K')^{\bm\sigma}$ in terms of balanced minimal invariant sets, thus describing also the intertwining operators between the two Cuntz dilations.

\begin{theorem}\label{th4.12}
 Let 
$$\mathcal C=\left\{ (i_{\mathcal M},i_{\mathcal M}') : \mathcal M\mbox{ balanced minimal invariant set}\right\},$$
and let $\bc[\mathcal C]$ be the space of complex valued functions on $\mathcal C$.

Let $\mathcal B(\mathcal K,\mathcal K')^{\bm\sigma}$ be the space of operators $T:\mathcal K\rightarrow\mathcal K'$ with 
\begin{equation}
T=\sum_{\lambda\in\Lambda}V_\lambda'TV_\lambda^*=:\bm \sigma(T).
\label{eq4.12.1}
\end{equation}
Then there is a linear isomorphism between $\bc[\mathcal C]$ and $\mathcal B(\mathcal K,\mathcal K')^{\bm \sigma}$, $\bc[\mathcal C]\ni c\mapsto T\in\mathcal B(\mathcal K,\mathcal K')^{\bm\sigma}$, defined by 
\begin{equation}
\ip{T(i,\ty)}{(i',\ty)}=\sum_{\lambda\in F(i,i')}\alpha_{i,\lambda}\cj \alpha'_{i',\lambda} c(i\cdot\lambda,i'\cdot \lambda).
\label{eq4.12.2}
\end{equation}
The inverse of this map is $\mathcal B(\mathcal K,\mathcal K')^{\bm\sigma}\ni T\mapsto c\in \bc[\mathcal C]$, 
\begin{equation}
c(i_{\mathcal M},i_{\mathcal M}')=\ip{T(i_{\mathcal M},\ty)}{(i'_{\mathcal M},\ty)},\quad (\mathcal M\in\mathcal C).
\label{eq4.12.3}
\end{equation}
\end{theorem}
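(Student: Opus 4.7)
Writing $t(i,i') := \langle T\vec i, \vec{i'}\rangle$ for the matrix entries of $T$ and using $V_\lambda^* \vec i = \alpha_{i,\lambda}\vec{i\cdot\lambda}$ (interpreted as $0$ if no outgoing $\lambda$-edge exists at $i$), the fixed point equation $T = \bm\sigma(T)$ translates into the one-step recursion
\[ t(i,i') = \sum_{\lambda \in \Lambda}\alpha_{i,\lambda}\overline{\alpha'_{i',\lambda}}\, t(i\cdot\lambda, i'\cdot\lambda), \]
which by induction holds with $\lambda$ ranging over words of any fixed length. I view this as a sub-Markov chain on the finite set $\mathcal V \times \mathcal V'$ with complex weights $\alpha_{i,\lambda}\overline{\alpha'_{i',\lambda}}$; by Cauchy-Schwarz the row sums of the probabilities $|\alpha_{i,\lambda}||\alpha'_{i',\lambda}|$ are at most $1$, with equality at $(i,i')$ if and only if $|\alpha_{i,\lambda}| = |\alpha'_{i',\lambda}|$ for every $\lambda$, which is precisely condition \textup{(i)} of \emph{balanced}.

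The plan is to iterate $n$ times and decompose each length-$n$ path from $(i,i')$ according to its first visit to the set $\mathcal C^{\mathrm{all}} := \{(i_\mathcal N, i'_\mathcal N) : \mathcal N \text{ minimal invariant}\}$. Applying the Markov property at the first hitting time yields
\[ t(i,i') = \sum_{\mu \in F(i,i'),\ |\mu|\leq n}\alpha_{i,\mu}\overline{\alpha'_{i',\mu}}\, t(i\cdot\mu, i'\cdot\mu) + R_n(i,i'), \]
where $R_n$ collects the length-$n$ paths that have not yet entered $\mathcal C^{\mathrm{all}}$. Because $\mathcal V \times \mathcal V'$ is finite, every orbit contains at least one minimal invariant set; inside each minimal $\mathcal M$ the chain either hits $(i_\mathcal M, i'_\mathcal M)$ almost surely (if \textup{(i)} holds, by irreducibility of a finite Markov chain on $\mathcal M$) or loses sub-Markov mass exponentially (if \textup{(i)} fails somewhere in $\mathcal M$); either way $R_n(i,i') \to 0$ as $n \to \infty$.

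The main obstacle is to show $t(i_\mathcal M, i'_\mathcal M) = 0$ for every non-balanced minimal invariant $\mathcal M$. Since the orbit of $(i_\mathcal M, i'_\mathcal M)$ stays in $\mathcal M$ and meets $\mathcal C^{\mathrm{all}}$ only at itself, the limit formula at that point reduces to $t(i_\mathcal M, i'_\mathcal M) = \beta_\mathcal M\, t(i_\mathcal M, i'_\mathcal M)$, with
\[ \beta_\mathcal M := \sum_{\mu \in F(i_\mathcal M, i'_\mathcal M)}\alpha_{i_\mathcal M,\mu}\overline{\alpha'_{i'_\mathcal M,\mu}}. \]
When \textup{(i)} fails, strict Cauchy-Schwarz gives $|\beta_\mathcal M| < 1$. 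When \textup{(i)} holds but \textup{(ii)} fails, Markov recurrence forces $\sum_\mu|\alpha||\alpha'| = 1$, so $|\beta_\mathcal M| \leq 1$ with equality only when all summands share a common argument, and $\beta_\mathcal M = 1$ only when they are all real positive, equivalently when $\alpha_{i_\mathcal M,\mu} = \alpha'_{i'_\mathcal M,\mu}$ on every first-return loop $\mu$. A conjugation argument (using a nonzero-weight path $\pi$ from $(i_\mathcal M, i'_\mathcal M)$ to any $(i,i') \in \mathcal M$ and a return path $\pi'$; the loops $\pi\pi'$ and $\pi\mu\pi'$ both decompose into first-return loops) then shows this is equivalent to condition \textup{(ii)}. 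Hence $\beta_\mathcal M \neq 1$ whenever $\mathcal M$ is not balanced, and $(1 - \beta_\mathcal M)\, t(i_\mathcal M, i'_\mathcal M) = 0$ gives the claim.

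Taking $n \to \infty$ in the decomposition and dropping the vanishing contributions at non-balanced endpoints recovers \eqref{eq4.12.2} with $c$ given by \eqref{eq4.12.3}, proving injectivity and the inverse formula. For the reverse direction, given $c \in \mathbb C[\mathcal C]$ (extended by zero to non-balanced chosen points), I define $T_c$ by \eqref{eq4.12.2}; the sum converges absolutely since $\sum_{\mu \in F(i,i')}|\alpha_{i,\mu}||\alpha'_{i',\mu}| \leq 1$. To verify $T_c = \bm\sigma(T_c)$, I expand the right-hand side using \eqref{eq4.12.2} and match terms by decomposing each relevant first-hitting path as $\mu = \lambda\nu$ with $\nu \in F(i\cdot\lambda, i'\cdot\lambda)$; the length-$1$ contribution reduces to the self-consistency $t_c(i_\mathcal M, i'_\mathcal M) = c(i_\mathcal M, i'_\mathcal M)$, which holds for balanced $\mathcal M$ because $\beta_\mathcal M = 1$ (by \textup{(ii)} each first-return summand equals $|\alpha_{i_\mathcal M,\mu}|^2$, and these sum to $1$ by recurrence) and trivially for non-balanced $\mathcal M$ where both sides vanish.
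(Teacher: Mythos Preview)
Your proof is correct and parallels the paper's assembly of Lemma~\ref{lem4.3}, Theorem~\ref{th4.4}, Lemma~\ref{lem4.10}, and Lemma~\ref{lem4.11}: iterate the recursion, decompose by first hitting of the chosen points, show the remainder $R_n\to 0$, eliminate the non-balanced contributions, and verify that the inverse map is a fixed point of $\bm\sigma$. The one substantive difference is how the non-balanced case is handled. The paper (Lemma~\ref{lem4.10}) argues contrapositively via a maximum-modulus trick: assuming some $T_{i,i'}\neq 0$ on $\mathcal M$, it selects $(i,i')\in\mathcal M$ maximizing $|T_{i,i'}|$, and equality in the Cauchy--Schwarz step of \eqref{eq4.4.4} then forces condition~(i) together with the propagation relation \eqref{eq4.10.1}; condition~(ii) is extracted afterward from \eqref{eq4.4.2} and the recurrence identity \eqref{eq4.5.1.1}. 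You instead isolate the scalar self-consistency $t(i_\mathcal M,i'_\mathcal M)=\beta_\mathcal M\,t(i_\mathcal M,i'_\mathcal M)$ and show $\beta_\mathcal M\neq 1$ directly, via sub-Markov mass loss for~(i) and a loop-conjugation argument for~(ii). Both routes are valid; the paper's yields the structural relation \eqref{eq4.10.1} as a byproduct, while yours stays anchored at the basepoint and is arguably more transparent. One caveat: your phrase ``strict Cauchy--Schwarz gives $|\beta_\mathcal M|<1$'' is compressed---the strict one-step inequality occurs at some $(j,j')\in\mathcal M$ possibly different from $(i_\mathcal M,i'_\mathcal M)$, and you need that $(j,j')$ is reachable from the basepoint without prior return (e.g.\ along a shortest path in $\mathcal M$) so that first-return mass is genuinely lost; this is implicit in your sub-Markov remarks but deserves a sentence. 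Likewise, your sketch of $R_n\to 0$ is correct but relies on standard finite-chain absorption facts where the paper gives an explicit submultiplicative bound (Lemma~\ref{lem4.3}).
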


We end the paper with some examples, in section 6. 

\section{A motivation from harmonic analysis}
The study of orthogonal Fourier series on fractal measures began with the paper \cite{JP98}, where Jorgensen and Pedersen proved that, for the Cantor measure $\mu_4$ with scale 4 and digits 0 and 2, the set of exponential functions 
$$\left\{e^{2\pi  i\lambda x} : \lambda=\sum_{k=0}^n 4^kl_k, n\in\bn, l_k\in\{0,1\}\right\},$$
is an orthonormal basis in $L^2(\mu_4)$. Such a measure, which possesses an orthonormal Fourier basis of exponential functions is called a {\it spectral measure}. Many more examples of spectral measures have been constructed since, see, e.g., \cite{Str00, DHL19}. For the classical Middle Third Cantor measure, Jorgensen and Pedersen proved that this construction is not possible. Strichartz \cite{Str00} posed the question, if this measure, has a frame of exponential functions. The question is still open at the time of writing this article. 
 In \cite{PiWe17}, Picioroaga and Weber, trying to construct frames of exponential functions for Cantor measures, introduced a new idea: to use Cuntz dilations to obtain orthonormal bases in spaces larger than the $L^2$-space of the given measure and then project them to construct Parseval frames. Even though the idea did not apply to the Middle Third Cantor set, a new class of Parseval frames was constructed for certain Cantor measures. The ideas were extended in \cite{DR18,DR16} and we present some of them here, in the context of Cuntz representations.

First, one needs to define the ground space: the Cantor measure, or more generally the self-affine measure. It is associated to a scale $R$ and a digit set $B$. For the Middle Third Cantor set $R=3$ and $B=\{0,2\}$. For the Jorgensen-Pedersen example in \cite{JP98}, $R=4$, $B=\{0,2\}$. 
\begin{definition}\label{defns}

For a given integer $R\geq 2$ and a finite set of integers $B$ with cardinality $|B|=:N,$ we define the \emph{affine iterated function system} (IFS) $\tau_{b}(x)=R^{-1}(x+b), x \in \br, b \in B.$ The \emph{self-affine measure} (with equal weights) is the unique probability measure $\mu = \mu(R, B)$ satisfying 
\begin{equation}\label{self-affine}
\int f\,d\mu=\frac1N\sum_{b\in B}\int f\circ\tau_b\,d\mu,\quad (f\in C_c(\br)).
\end{equation}
This measure is supported on the {\it attractor} $X_B$ which is the unique compact set that satisfies
$$
X_B= \bigcup_{b\in B} \tau_b(X_B).
$$
The set $X_B$ is also called the {\it self-affine set} associated with the IFS, and it can be described as 
$$X_B=\left\{\sum_{k=1}^\infty R^{-k}b_k : b_k\in B\right\}.$$

One can refer to \cite{Hut81} for a detailed exposition of the theory of iterated function systems. We say that $\mu = \mu(R,B)$ satisfies the {\it no overlap condition} if
$$
\mu(\tau_{b}(X_B)\cap \tau_{b'}(X_B))=0, \ \forall~b\neq b'\in B.
$$
For $\lambda\in \mathbb{R}$, define
$$e_\lambda(x)=e^{2\pi i\lambda x},~~ (x\in \mathbb{R}).$$

A \emph{frame} for a Hilbert space $H$ is a family $\{e_i\}_{i\in I}\subset H$ such that there exist constants $A,B>0$ such that for all $v\in H$,
$$A\|v\|^2\leq \sum_{i\in I}|\ip{v}{e_i}|^2\leq B\|v\|^2.$$
The largest $A$ and smallest $B$ which satisfy these inequalities are called the \emph{frame bounds}. The frame is called a \emph{Parseval} frame if both frame bounds are $1$. 
\end{definition}

Next, to construct the Parseval frame of exponential functions, one needs the dual set $L$ which acts as the starting point for the construction of the frequencies associated to the Fourier series. We make the following assumptions.

{\bf Assumptions 2.1.} Suppose that there exists a finite set $L \subset \mathbb{Z}$ with $0 \in L, |L|=:M$ and non-zero complex numbers $\left( \alpha_l \right)_{l \in L}$ such that the following properties are satisfied:
\begin{enumerate}
\item $\alpha_{0}=1.$
\item The matrix
\begin{equation}\label{matrix_T}
T := \frac{1}{\sqrt{N}} \left( e^{2 \pi i R^{-1}l \cdot b}\alpha_l \right)_{l \in L, b \in B}
\end{equation}
is an isometry, i.e., $T T^{*}=I_{N},$ i.e., its columns are orthonormal, which means that 
\begin{equation}
\frac{1}{N}\sum_{l\in L}|\alpha_l|^2e^{2\pi i R^{-1}l\cdot (b-b')}=\delta_{b,b'},\quad (b,b'\in B).
\label{eq2.2}
\end{equation}
\item The measure $\mu(R,B)$ has no overlap. 
\end{enumerate}

To formulate the result, we need some extra notations and definitions.

\begin{definition}\label{def1}

Let 
\begin{equation}
m_B(x)=\frac{1}{N}\sum_{b\in B}e^{2\pi i bx},\quad(x\in\br).
\label{eqmb}
\end{equation}

Since the measure $\mu(R,B)$ has no overlap, we can define the map $\mathcal R:X_B\rightarrow X_B$, by $$\mathcal R(x)=\tau_b^{-1}(x)=Rx-b,\mbox{ if }x\in\tau_b(X_B), b\in B.$$ 

A set $\mathcal M\subset \br$ is called {\it invariant} if for any point $t\in \mathcal M$, and any $l\in L$, if $\alpha_{l}m_B(R^{-1}(t-l))\neq 0$, then $g_{l}(t):=R^{-1}(t-l)\in \mathcal M$. $\mathcal M$ is said to be non-trivial if $\mathcal M\neq \{0\}$.  We call a finite {\it minimal invariant} set a {\it min-set}.

Note that 
\begin{equation}
\sum\limits_{l\in L} | \alpha_{l}  |^2 \ |m_{B}(g_{l} (t))|^{2}=1 \quad (t \in \mathbb{R}^d ),
\end{equation}
(see (3.2) in \cite[p.1615]{DR16}), and therefore, we can interpret the number $| \alpha_{l}  |^2 \ |m_{B}(g_{l} (t))|^{2}$ as the probability of transition from $t$ to $g_{l}(t)$, and if this number is not zero then we say that this {\it transition is possible in one step (with digit $l$)}, and we write $t\rightarrow g_{l}(t)$ or $t\stackrel{l}{\rightarrow}g_l(t)$. We say that the {\it transition is possible} from a point $t$ to a point $t'$ if there exist $t_0=t$, $t_1,\dots, t_n=t'$ such that $t=t_0\rightarrow t_1\rightarrow\dots\rightarrow t_n=t'$. The {\it trajectory} of a point $t$ is the set of all points $t'$ (including the point $t$) such that the transition is possible from $t$ to $t'$.

A {\it cycle} is a finite set $\{t_0,\dots,t_{p-1}\}$ such that there exist $l_0,\dots, l_{p-1}$ in $L$ such that $g_{l_0}(t_0)=t_1,\dots, g_{l_{p-1}}(t_{p-1})=t_{p}:=t_0$. Points in a cycle are called {\it cycle points}. 

A cycle $\{t_0,\dots, t_{p-1}\}$ is called {\it extreme} if $|m_B(t_i)|=1$ for all $i$; by the triangle inequality, since $0\in B$, this is equivalent to $t_i\cdot b\in \bz$ for all $b\in B$. 

For $k\in\bz$, we denote 
$$[k]:=\{k'\in\bz : (k'-k)\cdot R^{-1}b\in\bz, \mbox{ for all } b\in B\}.$$
\end{definition}

The next proposition gives some information about the structure of finite, minimal sets, which makes it easier to find such sets in concrete examples.

\begin{proposition}\label{pr3}\cite{DR18}
Let $\mathcal M $ be a non-trivial finite, minimal invariant set. Then, for every two points $t,t'\in \mathcal M $ the transition is possible from $t$ to $t'$ in several steps. In particular, every point in the set $\mathcal M $ is a cycle point. The set $\mathcal M $ is contained in the interval $\left[\frac{\min(-L)}{R-1},\frac{\max(-L)}{R-1}\right]$. 

If $t$ is in $\mathcal M $ and if there are two possible transitions $t\rightarrow g_{l_1}(t)$ and $t\rightarrow g_{l_2}(t)$, then $l_1\equiv l_2(\mod R)$.

Every point $t$ in $\mathcal M $ is an extreme cycle point, i.e., $|m_B(t)|=1$ and if $t\rightarrow g_{l_0}(t)$ is a possible transition in one step, then $[l_0]\cap L=\{l\in L : l\equiv l_0(\mod R)\}$ and
\begin{equation}
\sum_{l\in L,l\equiv l_0(\mod R)}|\alpha_l|^2=1.
\label{eqac}
\end{equation}

In particular $t\cdot b\in\bz$ for all $b\in B$. 
\end{proposition}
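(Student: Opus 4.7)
The plan is to establish the assertions in the order they build on one another, leveraging the minimality of $\mathcal M$, the stochastic identity $\sum_{l\in L}|\alpha_l|^2|m_B(g_l(t))|^2=1$, and the isometry condition \eqref{eq2.2}.

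For the transitivity of possible transitions within $\mathcal M$ and for the claim that every point of $\mathcal M$ is a cycle point, I would first observe that for any $t\in\mathcal M$ the trajectory $\mathcal O(t)$ is itself invariant: if $t'\in \mathcal O(t)$ and $\alpha_l m_B(g_l(t'))\neq 0$, then by construction $g_l(t')\in\mathcal O(t)$. Since $\mathcal M$ is minimal, $\mathcal O(t)=\mathcal M$. Starting from $t$ an infinite forward walk in $\mathcal M$ exists because at every state some transition is possible ($\sum_l|\alpha_l|^2|m_B(g_l(t))|^2=1>0$), and finiteness of $\mathcal M$ forces some point $s$ to recur along this walk, producing a cycle through $s$; splicing a path $t\to\cdots\to s$ with that cycle and then a path $s\to\cdots\to t$ (using transitivity) yields a cycle through $t$. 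The interval containment then follows from the explicit cycle-point formula $t=-\sum_{k=0}^{p-1}l_kR^k/(R^p-1)$ with $l_k\in L$, bounding the geometric sum between $\min(-L)/(R-1)$ and $\max(-L)/(R-1)$.

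The extreme cycle property $|m_B(t)|=1$ on $\mathcal M$ is the heart of the argument and what I anticipate as the main obstacle. My approach is to introduce the transfer operator
\[
(\mathcal L f)(t)=\sum_{l\in L}|\alpha_l|^2|m_B(g_l(t))|^2 f(g_l(t))
\]
on functions $f:\mathcal M\to\mathbb C$, which is well-defined by invariance of $\mathcal M$ and row-stochastic since $\mathcal L 1=1$; by the transitivity the associated Markov chain is irreducible. Perron--Frobenius then identifies $1$ as a simple eigenvalue with the constant eigenvector. For a cycle $t_0\to t_1\to\cdots\to t_p=t_0$ with digits $l_0,\dots,l_{p-1}$, the identity $\mathcal L^p 1(t_0)=1$ combined with the single-path contribution $\prod_{i=0}^{p-1}|\alpha_{l_i}|^2|m_B(t_{i+1})|^2\le 1$ and the positivity of all remaining paths forces each factor $|m_B(t_{i+1})|^2$ to saturate the bound $1$. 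Since every point of $\mathcal M$ lies on such a cycle, $|m_B(t)|=1$ throughout $\mathcal M$. Because $0\in B$, this is equivalent to $e^{2\pi i bt}=1$, i.e., $b\cdot t\in\mathbb Z$ for every $b\in B$, establishing the last assertion.

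Once the extreme cycle property is in hand, the congruence statements follow by algebra. If $t\to g_{l_0}(t)$ and $t\to g_{l_1}(t)$ are both possible, then $|m_B(g_{l_j}(t))|=1$ forces $b(t-l_j)/R\in\mathbb Z$ for all $b\in B$ and $j=0,1$; subtracting gives $b(l_1-l_0)/R\in\mathbb Z$ for all $b\in B$, so $l_1\in[l_0]$. Decomposing $L=\bigsqcup_r L_r$ by residues $r$ modulo $R$ and using $k(b-b')\in\mathbb Z$ for $b,b'\in\mathbb Z$, the isometry relation \eqref{eq2.2} collapses to a dual orthogonality among the residue-class weights $W_r=\sum_{l\in L_r}|\alpha_l|^2$. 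Inverting this relation, together with the observation that from $t\in\mathcal M$ only $l\in[l_0]\cap L$ can contribute a non-zero summand to $\sum_l|\alpha_l|^2|m_B(g_l(t))|^2=1$, yields $[l_0]\cap L=\{l\in L: l\equiv l_0\pmod R\}$ and $\sum_{l\equiv l_0\pmod R}|\alpha_l|^2=1$. The congruence claim for two possible transitions is then immediate: any alternative $l_1$ must lie in $[l_0]\cap L$, which now coincides with the residue class of $l_0$ modulo $R$.
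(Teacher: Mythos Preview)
The paper does not actually supply a proof of this proposition; it is quoted from \cite{DR18} and used as a black box. So there is no ``paper's own proof'' to compare against, and your proposal should be assessed on its own merits.

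Most of your outline is sound: the transitivity argument via $\mathcal O(t)=\mathcal M$, the pigeonhole argument producing cycles, and the geometric-series bound on cycle points are all standard and correct. The difficulty, as you anticipated, is the extreme-cycle property $|m_B(t)|=1$ on $\mathcal M$, and here your argument has a genuine gap. You write that ``$\mathcal L^p 1(t_0)=1$ combined with the single-path contribution $\prod_{i}|\alpha_{l_i}|^2|m_B(t_{i+1})|^2\le 1$ and the positivity of all remaining paths forces each factor $|m_B(t_{i+1})|^2$ to saturate the bound $1$.'' But $\mathcal L^p 1=1$ is just the row-stochastic identity: the total mass over \emph{all} length-$p$ paths from $t_0$ equals $1$. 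The specific cycle is only one of possibly many paths, so its weight being at most $1$ is automatic and in no way forces it to equal $1$, let alone forces each $|m_B(t_{i+1})|$ to equal $1$. If at some $t_i$ two transitions are possible, mass genuinely splits and the cycle carries strictly less than $1$. Perron--Frobenius gives you simplicity of the eigenvalue $1$ for $\mathcal L$, but you never actually use that conclusion; what is needed is a different invariant (for instance, an argument comparing $\mathcal L$ with the operator having weights $|\alpha_l|^2$ alone, or a computation with the stationary measure that exploits injectivity of the maps $g_l$ together with $m_B\neq 0$ on $\mathcal M$) to force $|m_B|\equiv 1$.

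The final paragraph also leans on steps that are only sketched. You correctly deduce $l_1\in[l_0]$ from $|m_B(g_{l_j}(t))|=1$, but in general $[l_0]$ is strictly larger than the residue class $\{l:l\equiv l_0\pmod R\}$ (take $R=4$, $B=\{0,2\}$), so the passage from $l_1\in[l_0]$ to $l_1\equiv l_0\pmod R$ requires the additional input $[l_0]\cap L=\{l\in L:l\equiv l_0\pmod R\}$. Your phrase ``inverting this relation'' for that step hides real work: the isometry condition \eqref{eq2.2} only gives orthogonality relations indexed by $B-B$, not by all of $\mathbb Z/R\mathbb Z$, so one cannot simply Fourier-invert. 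This step, together with \eqref{eqac}, needs the extremality $|m_B(g_l(t))|\in\{0,1\}$ already in hand and a more careful combinatorial argument.
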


\medskip
\noindent
\begin{definition}
Let $c$ be an extreme cycle point in some finite minimal invariant set. A word $l_0 \dots l_{p-1}$ in $L$ is called a \emph{cycle word} for $c$ if $g_{l_{p-1}} \dots g_{l_{0}}(c)=c$
and $g_{l_k} \dots g_{l_{0}}(c) \neq c$ for $0 \leq k < p-1,$ and the transitions $c \rightarrow g_{l_{0}}(c) \rightarrow g_{l_{1}}g_{l_{0}}(c) \rightarrow \dots \rightarrow g_{l_{p-2}}\dots g_{l_{0}}(c)\rightarrow  g_{l_{p-1}}\dots g_{l_{0}}(c)=c$ are possible. 
\end{definition}

For every finite minimal invariant set $\mathcal{M},$ pick a point $c(\mathcal{M})$ in $\mathcal{M}$ and define $\Omega(c(\mathcal{M}))$ to be the set of finite words with digits in $L$ that do not end in a cycle word for $c(\mathcal{M})$, i.e., they are not of the form $\omega\omega_0$ where $\omega_0$ is a cycle word for $c$ and $\omega$ is an arbitrary word with digits in $L$.

\begin{theorem}\cite{DR18}\label{th1.6}
Suppose $(R,B,L)$ and $(\alpha_l)_{l\in L}$ satisfy the Assumptions 2.1. Then the set 
$$\Bigg\{ \left( \prod_{j=0}^{k} \alpha_{l_j} \right) e_{l_{0}+R l_{1} + \dots + R^k l_k + R^{k+1} c(\mathcal{M}) }  : l_0 \dots l_n \in \Omega(c(\mathcal{M})), \mathcal{M}~\textnormal{is a min-set}   \Bigg\}$$
is a Parseval frame for $L^2(\mu(R,B)).$
\end{theorem}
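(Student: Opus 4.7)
The strategy is to realize the prescribed collection of weighted exponentials as the image of an orthonormal system under the Cuntz dilation machinery of Theorem \ref{th1.1}, applied to the random walk on $X_B$ induced by the QMF filters $m_l(x)=\alpha_l e^{2\pi i l x}$.

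First, build a row coisometry $(V_l)_{l\in L}$ on $\mathcal K=L^2(\mu(R,B))$ using the filters $m_l$ and the $N$-to-$1$ branch map $\mathcal R$ (well-defined by the no-overlap assumption). The QMF identity (2.2) combined with the invariance $\mathcal R_*\mu=\mu$ (a consequence of (2.1)) is precisely what is needed to obtain the row coisometry relation $\sum_{l\in L}V_lV_l^*=I_{\mathcal K}$. By Theorem \ref{th1.1}, this dilates to a Cuntz representation $(S_l)_{l\in L}$ of $\mathcal O_M$ on a larger Hilbert space $\mathcal H_V\supset L^2(\mu)$, with the associated projection $P:\mathcal H_V\to L^2(\mu)$ satisfying $S_l^*P=V_l^*$.

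Next, I would compute the action on exponentials: a straightforward induction on the length of the word $l_0\cdots l_k$ (using the concrete form of the dilation from Definition \ref{def1.3}) yields
$$S_{l_0}\cdots S_{l_k}e_\lambda = N^{(k+1)/2}\Bigl(\prod_{j=0}^{k}\alpha_{l_j}\Bigr)\, e_{l_0+R l_1+\cdots+R^k l_k+R^{k+1}\lambda}.$$
Applied to the exponentials $e_{c(\mathcal M)}$ this produces, up to a global normalization, exactly the vectors listed in the theorem. By Proposition \ref{pr3}, each $c(\mathcal M)$ is an extreme cycle point, so $|m_B(c(\mathcal M))|=1$; this extremality is precisely what makes $e_{c(\mathcal M)}$ a wandering-type generator whose $S_l^*$-orbit becomes periodic exactly along cycle words for $c(\mathcal M)$. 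The exclusion of cycle-ending words in $\Omega(c(\mathcal M))$ removes this periodic redundancy.

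To finish, it suffices to show that the family
$$\bigl\{S_{l_0}\cdots S_{l_k}e_{c(\mathcal M)}\; :\; l_0\cdots l_k\in\Omega(c(\mathcal M)),\ \mathcal M\text{ a min-set}\bigr\}$$
is an orthonormal basis of $\mathcal H_V$; projecting via $P$ to $L^2(\mu)$ then automatically yields a Parseval frame, because the image of an ONB under an orthogonal projection is Parseval. Orthogonality is a direct combinatorial consequence of the Cuntz relations $S_\lambda^*S_{\lambda'}=\delta_{\lambda,\lambda'}I$ combined with the cycle-word exclusion. Completeness---the main obstacle---reduces, via the intertwining description provided by Theorem \ref{th1.1}, to showing that the only $\bm\sigma$-invariant positive operators on $\mathcal K$ are built from the min-sets; equivalently, every trajectory of the random walk under the maps $g_l$ eventually lands in some min-set, which is exactly the content of Proposition \ref{pr3}.
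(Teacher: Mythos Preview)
The paper does not prove this theorem; it is quoted from \cite{DR18} and then \emph{used} (via equation \eqref{eqm.7.4}) in the proof of Theorem~\ref{thm.7}. So there is no in-paper argument to compare against, and your proposal should be read as an independent attempt.

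Your overall architecture---dilate the filter coisometry, produce an orthonormal basis upstairs, project to get a Parseval frame downstairs---is indeed the Picioroaga--Weber strategy underlying \cite{DR18,PiWe17}. However, the execution has two concrete problems.

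First, the displayed formula
\[
S_{l_0}\cdots S_{l_k}e_\lambda = N^{(k+1)/2}\Bigl(\prod_{j=0}^{k}\alpha_{l_j}\Bigr)\, e_{l_0+R l_1+\cdots+R^k l_k+R^{k+1}\lambda}
\]
is wrong on two counts. The $S_l$ are isometries on the dilation space $\mathcal H_V$, so $\|S_{l_0}\cdots S_{l_k}e_{c(\mathcal M)}\|=1$; your right-hand side has norm $N^{(k+1)/2}\prod_j|\alpha_{l_j}|$, which is not identically $1$ and in any case depends on $k$, so this cannot be fixed by a ``global normalization.'' What is true (equation \eqref{eqm.7.3.1}) is the analogous identity for the \emph{coisometry} $V_{l_0}\cdots V_{l_k}$, with no power of $N$. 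The vectors $S_\omega e_{c(\mathcal M)}$ live in $\mathcal H_V$ and are \emph{not} exponential functions in $L^2(\mu)$; only their projections $P S_\omega e_{c(\mathcal M)}=V_\omega e_{c(\mathcal M)}$ are. You are conflating $S_l$ with $V_l$.

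Second, your completeness step is not an argument. Proposition~\ref{pr3} does not assert that every trajectory lands in a min-set; it describes the internal structure of min-sets. What is actually needed is (a) that the $e_{c(\mathcal M)}$ are orthonormal across different min-sets, (b) that the family $\{S_\omega e_{c(\mathcal M)}:\omega\in\Omega(c(\mathcal M))\}$ spans the cyclic subspace generated by $e_{c(\mathcal M)}$, and (c) that these cyclic subspaces together exhaust $\mathcal H_V$. Step (c) is the substantive part and requires an ergodic/harmonic argument (in \cite{DR18} it goes through the Ruelle transfer operator and the characterization of its harmonic functions), not merely the orbit description in Proposition~\ref{pr3}.
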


Here we will formulate these results, in our context, of row coisometries, Cuntz representations and random walks on graphs. This will give us a better understanding of the structure associated to the Fourier series on self-affine measures. 

\begin{definition}\label{def1.4}
We denote by $\Omega$ the set of all finite words with letters in $\Lambda$, including the empty word denoted $\ty$,
$$\Omega=\left\{ \lambda_1\dots \lambda_m : \lambda_1,\dots, \lambda_m\in  \Lambda , m\geq 0\right\}.$$
(In literature, the notation $\Omega=\Lambda^*$ is often used, as the free monoid generated by $\Lambda$.)

For $\lambda=\lambda_1\dots \lambda_m\in\Omega$, we denote by $|\lambda|=m$, the length of $\lambda$. 

We use the notation, for $\lambda_1\dots\lambda_m\in\Omega$, 
$$V_{\lambda_1\lambda_2\dots \lambda_m}=V_{\lambda_1}V_{\lambda_2}\dots V_{\lambda_m},\quad S_{\lambda_1\dots \lambda_m}=S_{\lambda_1}\dots S_{\lambda_m}.$$
\end{definition}

\begin{theorem}\label{thm.7}
Define the operators $(V_l)_{l\in L}$ on $L^2(\mu(R,B))$ by 
\begin{equation}
V_lf(x)=\alpha_le_l(x)f(\mathcal R(x)),\quad (x\in X_B,f\in L^2(\mu(R,B)), l\in L).
\label{eqm.7.1}
\end{equation}
Then $(V_l)_{l\in L}$ is a row coisometry. 

Let $\mathcal M$ be a min-set and let 
$$\mathcal K_{\mathcal M}=\Span\{ e_t : t\in\mathcal M\}.$$
Then $\mathcal K_{\mathcal M}$ is invariant for $V_l^*$, $l\in L$ and 
\begin{equation}
V_l^* e_t=\cj \alpha_l m_B(g_l(t))e_{g_l(t)},\quad (t\in\br).
\label{eqm.7.2}
\end{equation}
In addition, for all $t\in\mathcal M$, $m_B(g_l(t))=1$, if the transition $t\arr l g_l(t)$ is possible and 
\begin{equation}
\sum_{\stackrel{l\in L, t\arr l g_l(t)}{\mbox{ \tiny{is possible}}}}|\alpha_l|^2=1.
\label{eqm.7.3}
\end{equation}
The exponential functions $\{e_t : t\in\mathcal M\}$ form an orthonormal basis for $\mathcal K_{\mathcal M}$ and the spaces $\mathcal K_{\mathcal M}$ are mutually orthogonal.

Let 
$$\mathcal H_{\mathcal M}=\Span\{ V_{l_0\dots l_k}e_t : k\geq 0, l_0,\dots, l_k\in L, t\in\mathcal M\}.$$
\begin{equation}
 V_{l_0\dots l_k}e_t=\left( \prod_{j=0}^{k} \alpha_{l_j} \right)e_{l_0+Rl_1+\dots+R^kl_k+R^{k+1}t}, \quad (k\geq 0, l_0,\dots, l_k\in L, t\in\mathcal M).
\label{eqm.7.3.1}
\end{equation}

\begin{equation}
\Span\{\mathcal H_{\mathcal M}: {\mathcal M\mbox{ min-set}}\}=L^2(\mu(R,B)).
\label{eqm.7.4}
\end{equation}

The coisometry $(P_{\mathcal K_{\mathcal M}}V_lP_{\mathcal K_{\mathcal M}})_{l\in L}$ on $\mathcal K$ is isomorphic to the coisometry associated to the graph with vertices $\mathcal V=\mathcal M$ and transition weights $\alpha_{t,l}:=\cj \alpha_l $, $t\in \mathcal M$, $l\in L$, through the linear map defined by $\bc[\mathcal M]\ni \vec t\mapsto e_t\in \mathcal K_{\mathcal M}$. 

The Cuntz dilation of this coisometry is irreducible.

\end{theorem}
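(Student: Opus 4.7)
The plan is to dispatch the various assertions in order, most reducing either to direct computations or to invocations of earlier results in the paper.

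First, I would compute the adjoint $V_l^*$ by duality. Using the self-affine invariance \eqref{self-affine} to change variables via $\tau_b$ gives
\[
V_l^* g(y) = \frac{\cj\alpha_l}{N}\sum_{b \in B} e^{-2\pi i R^{-1} l (y+b)} g(\tau_b(y)),
\]
which, when applied to $e_t$, collapses (by grouping the sum over $b$) to $\cj\alpha_l\, m_B(R^{-1}(t-l))\, e_{g_l(t)}$, giving \eqref{eqm.7.2}. To verify \eqref{eqde1.2.1}, I compute $V_l V_l^* g$ at a point $x \in \tau_{b_0}(X_B)$, so that $\mathcal{R}(x) = Rx - b_0$; this produces $\frac{|\alpha_l|^2}{N}\sum_{b \in B} e^{2\pi i R^{-1} l (b_0 - b)} g(x + R^{-1}(b - b_0))$, and summing over $l$ and applying the column orthogonality \eqref{eq2.2} collapses the double sum to $g(x)$.

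The invariance of $\mathcal K_{\mathcal M}$ under $V_l^*$ follows immediately from \eqref{eqm.7.2}: either $\alpha_l m_B(g_l(t)) = 0$ and $V_l^* e_t = 0$, or the transition is possible and $g_l(t) \in \mathcal M$ by invariance of $\mathcal M$. Proposition \ref{pr3} identifies each $t \in \mathcal M$ as an extreme cycle point with $t \cdot b \in \bz$ for every $b \in B$, so $m_B(t) = 1$; consequently $m_B(g_l(t)) = 1$ whenever the transition is possible, and \eqref{eqm.7.3} is precisely \eqref{eqac}. The orthonormality of $\{e_t : t \in \mathcal M\}$ within each $\mathcal K_{\mathcal M}$ and the mutual orthogonality of different $\mathcal K_{\mathcal M}$'s is the content of the extreme-cycle analysis in \cite{DR18}; it reflects the fact that for distinct $t, t'$ in min-sets the infinite product $\hat\mu(t-t') = \prod_{k \geq 1} m_B(R^{-k}(t-t'))$ acquires a vanishing factor.

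The frequency formula \eqref{eqm.7.3.1} is a one-line induction on $k$ using $V_l f(x) = \alpha_l e^{2\pi i l x} f(\mathcal{R}(x))$ at each step. With it, \eqref{eqm.7.4} follows at once from Theorem \ref{th1.6}: the Parseval frame vectors listed there are precisely the $V_{l_0 \dots l_k} e_{c(\mathcal M)} \in \mathcal H_{\mathcal M}$, whose closed span is all of $L^2(\mu(R,B))$. The identification with the graph coisometry is then transparent: the linear map $\bc[\mathcal M] \ni \vec t \mapsto e_t \in \mathcal K_{\mathcal M}$ is isometric by the orthonormality just cited, and by \eqref{eqm.7.2} it intertwines $V_l^*$ with the graph coisometry of \eqref{eqde1.3.2} with weights $\alpha_{t,l} := \cj\alpha_l$.

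The irreducibility of the Cuntz dilation is the most delicate part. I would deduce it from Theorem \ref{th4.12} applied with $\mathcal V = \mathcal V' = \mathcal M$ and identical weights. By Theorem \ref{th1.1} the commutant corresponds to $\mathcal B(\mathcal K_{\mathcal M})^{\bm\sigma}$, which by Theorem \ref{th4.12} is in bijection with the balanced minimal invariant sets of $\mathcal M \times \mathcal M$. The diagonal $\Delta = \{(t,t) : t \in \mathcal M\}$ is such a set, and I claim it is the only one. Suppose $(s, s')$ lies in a balanced minimal invariant set. Condition (i) of Definition \ref{def4.9} forces the sets of $l \in L$ giving possible transitions from $s$ and from $s'$ to coincide, so any word $l_0 \dots l_{k-1}$ giving a possible path from $s$ also gives one from $s'$; by \eqref{eqm.7.3} such words exist of every length $k$, and the resulting pair has coordinate difference $(s'-s)/R^k$. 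If $s \neq s'$ these values are all distinct, contradicting the finiteness of $\mathcal M \times \mathcal M$. Hence $s = s'$, $\mathcal B(\mathcal K_{\mathcal M})^{\bm\sigma} = \bc I$, and the representation is irreducible.
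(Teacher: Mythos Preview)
Your proof is correct and tracks the paper's argument closely through the adjoint computation, the row-coisometry verification, the invariance of $\mathcal K_{\mathcal M}$, the appeal to Proposition~\ref{pr3} and \cite{DR18}, and the identification with the graph coisometry. Two small points: for \eqref{eqm.7.3} you write that it ``is precisely \eqref{eqac},'' but this requires the intermediate observation (made explicit in the paper) that the set of labels $l$ giving a possible transition from $t$ is exactly a single congruence class $\bmod\ R$; and for \eqref{eqm.7.3.1} your one-line induction tacitly uses $tb\in\bz$ for $t\in\mathcal M$ (so that $e_s(\mathcal R x)=e_{Rs}(x)$), which the paper also draws from Proposition~\ref{pr3}.

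The only substantive divergence is in the irreducibility step. The paper invokes Corollary~\ref{cor5.10}: it checks that the random walk on $\mathcal M$ is connected (by minimality) and separating (via the contraction $g_l$ and finiteness of $\mathcal M$), and lets the corollary conclude. You instead go straight to Theorem~\ref{th4.12} and argue directly that the diagonal is the unique balanced minimal invariant set, using condition~(i) of Definition~\ref{def4.9} to propagate paths from $s$ to $s'$ and then the same contraction-plus-finiteness observation to force $s=s'$. This is essentially the proof of Corollary~\ref{cor5.10} inlined, so the two routes share the same key idea; yours is marginally more self-contained, while the paper's isolates ``connected and separating'' as a reusable criterion.
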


\begin{proof}
Let $\mu=\mu(R,B)$. First, we compute the adjoints $V_l^*$. We have, using \eqref{self-affine}, for $f,g\in L^2(\mu)$:
$$\ip{V_l f}{g}=\int \alpha_l e^{2\pi i lx}f(\mathcal Rx)\cj g(x)\,d\mu=\frac{1}{N}\sum_{b\in B}\int\alpha_l e^{2\pi i l\tau_b(x)}f(\mathcal R\tau_b(x))\cj g(\tau_b(x))\,d\mu$$
$$=\ip{f}{\frac{1}{N}\sum_{b\in B}\cj\alpha_le_{-l}\circ\tau_b\cdot g\circ\tau_b}.$$
Therefore,
\begin{equation}
V_l^*g=\frac{1}{N}\sum_{b\in B}\cj\alpha_le_{-l}\circ\tau_b\cdot g\circ\tau_b,\quad (g\in L^2(\mu)).
\label{eqm.7.5}
\end{equation}
Then, for $f\in L^2(\mu)$ and $x\in X_B$, suppose $x\in\tau_{b'}(X_B)$, and we have, using the assumptions, 
$$\sum_{l\in L}V_lV_l^*f(x)=\sum_{l\in L}\alpha_le^{2\pi ilx}\frac{1}{N}\sum_{b\in B}\cj\alpha_le^{-2\pi il\frac{Rx-b'+b}{R}}f\left(\frac{Rx-b'+b}{R}\right)$$
$$=\sum_{b\in B}f\left(x+\frac{b-b'}R\right)\frac{1}{N}\sum_{l\in L}|\alpha_l|^2e^{2\pi il\frac{b-b'}{R}}=\sum_{b\in B}f\left(x+\frac{b-b'}R\right)\delta_{bb'}=f(x).$$
This shows that $(V_l)_{l\in L}$ is a row coisometry. 

We compute, for $t\in\mathcal \br$, 
$$V_l^*e_t(x)=\frac{1}{N}\sum_{b\in B}\cj\alpha_l e^{-2\pi i l\frac{x+b}{R}}e^{2\pi it\frac{x+b}{R}}=\cj\alpha_l\left(\frac{1}{N}\sum_{b\in B}e^{2\pi  i b\cdot\frac{t-l}{R}}\right)e^{2\pi ix\cdot\frac{t-l}R}=\cj\alpha_lm_B(g_l(t))e_{g_l(t)}(x).$$
This implies \eqref{eqm.7.2}.

Now let $\mathcal M$ be a min-set. For $t\in\mathcal M$, we have $V_l^*e_t=\cj\alpha_lm_B(g_l(t))e_{g_l(t)}$. If the transition $t\arr l g_l(t)$ is not possible, that means that $\cj \alpha_lm_B(g_l(t))=0$ so $V_l^*e_t=0$. If the transition $t\arr l g_l(t)$ is possible, then $g_l(t)\in\mathcal M$ so $V_l^*e_t\in\mathcal K_{\mathcal M}$. Thus $\mathcal K_{\mathcal M}$ is invariant for $V_l^*$. 

For $t\in\mathcal M$, if the transition $t\arr l g_l(t)$ is possible, then $g_l(t)\in\mathcal M$, and, by Proposition \ref{pr3}, $bg_l(t)\in\bz$ for all $b\in B$ so $m_B(g_l(t))=1$. Also, from the same Proposition, we have that if the transitions $t\arr {l_1} g_{l_1}(t)$ and $t\arr{l_2} g_{l_2}(t)$ are possible, then $l_1\equiv l_2(\bmod R)$. Conversely, if the transition $t\arr{l_1} g_{l_1}(t)$ is possible, and $l_1\equiv l_2(\bmod R)$, then $m_B(g_{l_2}(t))=\frac{1}{N}\sum_{b\in B}e^{2\pi i b\frac{t-l_2}{R}}=\frac{1}{N}\sum_{b\in B}e^{2\pi i b\frac{t-l_1}{R}}=m_B(g_{l_1}(t))\neq 0$, so the transition $t\arr{l_2} g_{l_2}(t)$ is possible (note that we assumed that the numbers $\alpha_l$ are all non-zero). Therefore, using again Proposition \ref{pr3}, fixing some $l_0\in L$ such that the transition $t\arr {l_0} g_{l_0(t))}$ is possible, 
$$\sum_{\stackrel{l\in L, t\arr l g_l(t)}{\mbox{ \tiny{is possible}}}}|\alpha_l|^2|m_B(g_l(t))|^2=\sum_{l\equiv l_0(\bmod R)}|\alpha_l|^2=1.$$

The fact that the functions $e_t$, $t\in \mathcal M$, $\mathcal M$ min-set, are mutually orthogonal is in the proof of \cite[Theorem 1.6]{DR18}. Equation \eqref{eqm.7.3.1} follows from a simple computation, using the fact that, by Proposition \ref{pr3}, for all $t\in\mathcal M$, $bt\in\bz$ and so $t$ is a period for $m_B$, i.e., $m_B(x+kt)=m_B(t)$ for all $x\in\br$, $k\in\bz$. The relation \eqref{eqm.7.4} follows from Theorem \ref{th1.6}. 

For $t\in\mathcal M$, by \eqref{eqm.7.2}, 
$$P_{\mathcal K_{\mathcal M}}V_l^*P_{\mathcal K_{\mathcal M}}e_t=\cj\alpha_lm_B(g_l(t))P_{\mathcal K_{\mathcal M}}e_{g_l(t)}=\left\{\begin{array}{cc}
\cj\alpha_l e_{g_l(t)},&\mbox{ if the transition $t\arr lg_l(t)$ is possible},\\
0,&\mbox{ otherwise.}\end{array}\right.$$
So , the coisometry $(P_{\mathcal K_{\mathcal M}}V_l^*P_{\mathcal K_{\mathcal M}})_{l\in L}$ on $\mathcal K_{\mathcal M}$ is isometric to the given graph coisometry. 

To see that the associated Cuntz dilation is irreducible, we use Corollary \ref{cor5.10} and show that the random walk is connected and separating. 

The fact that the random walk is connected follows from the minimality of $\mathcal M$ and Proposition \ref{pr3}. To check that the random walk is separating take $t\neq t'$ in $\mathcal M$. Let $l_0,\dots,l_n$ in $L$ such that both transitions $t\arr{l_0\dots l_n} g_{l_n\dots l_0}(t)$ and $t'\arr{l_0\dots l_n} g_{l_n\dots l_0}(t')$ are possible. Then $g_{l_n\dots l_0}(t)$ and $g_{l_n\dots l_0}(t')$ are in $\mathcal M$. But, the maps $g_l$ are contractions so $\lim_n \mbox{dist}(g_{l_n\dots l_0}(t),g_{l_n\dots l_0}(t'))=0$. Since $\mathcal M$ is finite, for $n$ large enough, we get that $g_{l_n\dots l_0}(t)=g_{l_n\dots l_0}(t')$, but that means $t=t'$, a contradiction. Thus, for $n$ large enough, we cannot have that both transitions $t\arr{l_0\dots l_n} g_{l_n\dots l_0}(t)$ and $t'\arr{l_0\dots l_n} g_{l_n\dots l_0}(t')$ are possible. So, the random walk is separating.

\end{proof}

\section{General results}

\begin{proposition}\label{pr2.1}
Let $(V_{\lambda})_{\lambda\in\Lambda}$ be a row coisometry on the Hilbert space $\mathcal K$ and let $(S_{\lambda})_{\lambda\in\Lambda}$ be its Cuntz dilation on the Hilbert space $\mathcal H$. Define the subspaces 
\begin{equation}
\mathcal K_m=\Span\{S_{\lambda_1\dots \lambda_m}k : k\in\mathcal K, \,\lambda_1,\dots \lambda_m\in \Lambda \},\quad (m\in\bn), \quad\mathcal K_0=\mathcal K.
\label{eq2.1.1}
\end{equation}
\begin{enumerate}
	\item[(i)] $\{\mathcal K_m\}$ is an increasing sequence of subspaces and 
	$$\overline{\bigcup_{m\in\bn}\mathcal K_m}=\mathcal H.$$
	\item[(ii)] For each $m\geq 0$ and $\lambda\in \Lambda $, 
$$S_{\lambda}^*\mathcal K_{m+1}\subseteq \mathcal K_m.$$
\item[(iii)] For $k\in\mathcal K_m$, and each $n\leq m$, we have that the representation 
$$k=\sum_{\lambda\in\Omega, |\lambda|=n}S_\lambda k_\lambda,$$
is unique with $k_\lambda\in\mathcal H$, and moreover, $k_\lambda$ is given by $k_\lambda=S_\lambda^*k\in\mathcal K_{m-n}$. 
\item[(iv)] Let $P_{\mathcal K_m}$ be the orthogonal projection onto $\mathcal K_m$. Then 
$$P_{\mathcal K_m}=\sum_{\lambda_1,\dots,\lambda_m\in \Lambda }S_{\lambda_1\dots \lambda_m}P_{\mathcal K}S_{\lambda_1\dots \lambda_m}^*.$$

\end{enumerate}

\end{proposition}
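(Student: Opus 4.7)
The plan is to exploit two ingredients from Theorem \ref{th1.1}: the Cuntz relations, which give the resolution $\sum_{\lambda}S_\lambda S_\lambda^*=I_{\mathcal H}$ (iterable to $\sum_{|\lambda|=n}S_\lambda S_\lambda^*=I_{\mathcal H}$), and the invariance $S_\lambda^*\mathcal K\subseteq\mathcal K$ (with $S_\lambda^*|_{\mathcal K}=V_\lambda^*$). Together with the cyclicity of $\mathcal K$ for the representation, these two facts essentially give all four assertions; the work is to order the deductions correctly so that (iii) and (iv) rest on (i) and (ii).

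For (i), to prove $\mathcal K_m\subseteq\mathcal K_{m+1}$ I would take a spanning vector $S_{\lambda_1\cdots\lambda_m}k$ with $k\in\mathcal K$, apply the Cuntz identity in the form $k=\sum_{\lambda\in\Lambda}S_\lambda S_\lambda^* k=\sum_{\lambda\in\Lambda}S_\lambda V_\lambda^* k$, and note that each $V_\lambda^* k$ lies in $\mathcal K$; substituting gives $S_{\lambda_1\cdots\lambda_m}k=\sum_\lambda S_{\lambda_1\cdots\lambda_m\lambda}V_\lambda^*k\in\mathcal K_{m+1}$. The density statement $\overline{\bigcup_m\mathcal K_m}=\mathcal H$ is just a restatement of the cyclicity of $\mathcal K$ asserted in Theorem \ref{th1.1}, since $\bigcup_m\mathcal K_m$ is by definition the linear span of $\{S_\mu k:\mu\in\Omega,k\in\mathcal K\}$. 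For (ii), taking a typical spanning vector $S_{\lambda_1\cdots\lambda_{m+1}}k\in\mathcal K_{m+1}$, the Cuntz relation $S_\lambda^*S_{\lambda_1}=\delta_{\lambda,\lambda_1}I$ collapses $S_\lambda^*S_{\lambda_1\cdots\lambda_{m+1}}k$ to either $0$ or $S_{\lambda_2\cdots\lambda_{m+1}}k\in\mathcal K_m$, and then extend by linearity and continuity.

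For (iii), iterating $\sum_{\lambda}S_\lambda S_\lambda^*=I$ exactly $n$ times produces $\sum_{|\lambda|=n}S_{\lambda}S_\lambda^*=I_{\mathcal H}$; applied to $k\in\mathcal K_m$ this gives the existence $k=\sum_{|\lambda|=n}S_\lambda(S_\lambda^*k)$, and (ii) applied $n$ times places each $S_\lambda^*k$ in $\mathcal K_{m-n}$. For uniqueness, if $k=\sum_{|\lambda|=n}S_\lambda k_\lambda$ with any $k_\lambda\in\mathcal H$, then applying $S_\mu^*$ for $|\mu|=n$ and using $S_\mu^*S_\lambda=\delta_{\mu\lambda}I$ isolates $k_\mu=S_\mu^*k$. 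For (iv), using (iii) with $n=m$, the subspace $\mathcal K_m$ decomposes as the orthogonal internal direct sum $\bigoplus_{|\lambda|=m}S_\lambda\mathcal K$, since the $S_\lambda$ are isometries with mutually orthogonal ranges; the orthogonal projection onto $S_\lambda\mathcal K$ is $S_\lambda P_{\mathcal K}S_\lambda^*$ (as $S_\lambda$ is an isometry), and orthogonality of the summands makes the sum of projections the projection onto the sum.

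The only subtle point, which I would flag as the chief obstacle, is the sequencing: one must establish the invariance $S_\lambda^*\mathcal K\subseteq\mathcal K$ (already guaranteed by Theorem \ref{th1.1}) before (i), so that the Cuntz resolution applied to $k\in\mathcal K$ yields vectors back in $\mathcal K$; and one must prove (ii) before (iii), so that the summands $S_\lambda^*k$ can be placed in $\mathcal K_{m-n}$ rather than merely in $\mathcal H$. Apart from this ordering and the routine verification that the series converges/collapses correctly when passing from spanning vectors to their closed linear span, the argument is a straightforward bookkeeping exercise with the Cuntz relations.
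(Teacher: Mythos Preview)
Your proposal is correct and follows essentially the same approach as the paper: both use the Cuntz resolution $\sum_\lambda S_\lambda S_\lambda^*=I$ together with the invariance $S_\lambda^*\mathcal K\subseteq\mathcal K$ from Theorem~\ref{th1.1} for (i), collapse $S_\lambda^*S_{\lambda_1}=\delta_{\lambda,\lambda_1}I$ for (ii), and iterate (ii) for (iii). The only cosmetic differences are that the paper proves uniqueness in (iii) via $\|k-k\|^2=\sum_{|\lambda|=n}\|k_\lambda-k_\lambda'\|^2=0$ rather than by applying $S_\mu^*$, and proves (iv) by directly checking that $T:=\sum_{|\lambda|=m}S_\lambda P_{\mathcal K}S_\lambda^*$ fixes $\mathcal K_m$ and annihilates $\mathcal K_m^\perp$, rather than via your orthogonal-direct-sum description $\mathcal K_m=\bigoplus_{|\lambda|=m}S_\lambda\mathcal K$; these are equivalent routine verifications.
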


\begin{proof}
(i) Let $S_{\lambda_1\dots \lambda_m}k\in\mathcal K_m$, with $k\in\mathcal K$. Then, using the Cuntz relations
$$S_{\lambda_1\dots \lambda_m}k=S_{\lambda_1\dots \lambda_m}\sum_{\lambda\in\Lambda}S_{\lambda}S_{\lambda}^*k=\sum_{\lambda\in\Lambda}S_{\lambda_1\dots \lambda_m}S_{\lambda}(S_{\lambda}^*k).$$
Since $S_{\lambda}^*k=V_{\lambda}^*k\in\mathcal K$ for each $\lambda$, it follows that $S_{\lambda_1\dots \lambda_m}k\in\mathcal K_{m+1}$, so $\mathcal K_m\subseteq \mathcal K_{m+1}.$

The density of the union follows from the fact that $\mathcal K$ is cyclic for the representation.

(ii) Let $S_{\lambda_1\dots \lambda_{m+1}}k\in\mathcal K_{m+1}$. Then $S_{\lambda}^*S_{\lambda_1\dots \lambda_{m+1}}k=\delta_{\lambda_1}^\lambda S_{\lambda_2\dots \lambda_{m+1}}k\in\mathcal K_m$.

(iii) Assume we have two such representations

$$k=\sum_{\lambda\in\Omega, |\lambda|=n}S_\lambda k_\lambda=\sum_{\lambda\in\Omega, |\lambda|=n}S_\lambda k_\lambda'.$$
Then, using the orthogonality of the ranges of the isometries
$$0=\|k-k\|^2=\sum_{|\lambda|=n}\|S_\lambda(k_\lambda-k_\lambda')\|^2=\sum_{|\lambda|=n}\|k_\lambda-k_\lambda'\|^2.$$
Therefore $k_\lambda= k_\lambda'$ for all $|\lambda|=n$. 

Further, for all $n\leq m$ and $|\lambda|=n$, by inducting on (ii) we have $k_\lambda=S_{\lambda}^*k\in \mathcal K_{m-n}$, and, using the Cuntz relations, 
$$k=\sum_{|\lambda|=n}S_\lambda S_{\lambda}^*k=\sum_{|\lambda|=n}S_\lambda k_\lambda.$$

(iv) Denote 
$$T=\sum_{\lambda_1\dots \lambda_m}S_{\lambda_1\dots \lambda_m}P_{\mathcal K}S_{\lambda_1\dots \lambda_m}^*.$$

If $h\in\mathcal K_m$, then, by (ii), we have that $S_{\lambda_1\dots \lambda_m}^*\in \mathcal K$, so $P_{\mathcal K}S_{\lambda_1\dots \lambda_m}^*h=S_{\lambda_1\dots \lambda_m}^*h$. Therefore 
$$Th=\sum_{\lambda_1\dots \lambda_m} S_{\lambda_1\dots \lambda_m}\left(P_{\mathcal K}S_{\lambda_1\dots \lambda_m}^*h\right)=\sum_{\lambda_1\dots \lambda_m}S_{\lambda_1\dots \lambda_m}S_{\lambda_1\dots \lambda_m}^*h=h.$$
Now, if $h\perp\mathcal K_m$, then we have that for each $k\in\mathcal K$, 
$$\ip{S_{\lambda_1\dots \lambda_m}^*h}{k}=\ip{h}{S_{\lambda_1\dots \lambda_m}k}=0.$$
This means that $P_{\mathcal K}S_{\lambda_1\dots \lambda_m}^*h=0$, and therefore $Th=0$ for $h\perp \mathcal K_m$ and $Th=h$ for $h\in\mathcal K_m$. In conclusion $T=P_{\mathcal K_m}$.
\end{proof}

\begin{proposition}\label{pr2.2}
With the notations in Theorem \ref{th1.1}, let $T\in\mathcal B(\mathcal K)^{\bm \sigma}$ and let $A_T$ be the associated operator on $\mathcal H$ which commutes with the representation $(S_{\lambda})_{\lambda\in\Lambda}$, defined uniquely by the property $T=P_{\mathcal K}A_TP_{\mathcal K}$. Then, for $m\in\bn$, 
\begin{equation}
P_{\mathcal K_m}A_TP_{\mathcal K_m}=\sum_{|\lambda|=m}S_\lambda TS_\lambda^*.
\label{eq2.2.1}
\end{equation}
Also, $P_{\mathcal K_m}A_TP_{\mathcal K_m}\xi\arr{m\rightarrow\infty}A_T\xi$ for all $\xi\in \mathcal H$. 
\end{proposition}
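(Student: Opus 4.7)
The plan is to derive the first identity by substituting the formula for $P_{\mathcal K_m}$ from Proposition~\ref{pr2.1}(iv) into both positions surrounding $A_T$ and simplifying via the Cuntz relations. Writing $P_{\mathcal K_m}=\sum_{|\lambda|=m}S_\lambda P_{\mathcal K} S_\lambda^*$, I would first obtain
$$P_{\mathcal K_m}A_T P_{\mathcal K_m}=\sum_{|\lambda|=|\lambda'|=m}S_\lambda P_{\mathcal K} S_\lambda^* A_T S_{\lambda'} P_{\mathcal K} S_{\lambda'}^*.$$
Since $A_T$ lies in the commutant of the representation, it commutes with every $S_{\lambda'}$, so $A_T$ can be pulled past $S_{\lambda'}$. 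The elementary Cuntz relation $S_\lambda^* S_{\lambda'}=\delta_{\lambda\lambda'}I$ for words of equal length (proved by peeling off letters using $S_{\lambda_1}^*S_{\lambda'_1}=\delta_{\lambda_1\lambda'_1}I$) collapses the double sum to a single sum, leaving $\sum_{|\lambda|=m}S_\lambda P_{\mathcal K} A_T P_{\mathcal K} S_\lambda^* = \sum_{|\lambda|=m}S_\lambda T S_\lambda^*$ by the defining identity $T=P_{\mathcal K}A_T P_{\mathcal K}$.

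For the strong convergence statement I would invoke Proposition~\ref{pr2.1}(i), which says that $\{\mathcal K_m\}$ is an increasing sequence whose union is dense in $\mathcal H$; this immediately gives $P_{\mathcal K_m}\to I$ strongly. Boundedness of $A_T$ then yields $A_T P_{\mathcal K_m}\xi\to A_T\xi$ for each $\xi\in\mathcal H$, and a second application of $P_{\mathcal K_m}$ preserves this limit because the projections are uniform contractions: for $\eta_m\to\eta$ the estimate $\|P_{\mathcal K_m}\eta_m-\eta\|\leq \|\eta_m-\eta\|+\|P_{\mathcal K_m}\eta-\eta\|$ tends to zero. Hence $P_{\mathcal K_m}A_T P_{\mathcal K_m}\xi\to A_T\xi$, as required.

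There is no substantive obstacle in the argument; it is essentially a two-line algebraic manipulation followed by a standard strong-convergence observation. The only small interpretive point is that $S_\lambda T S_\lambda^*$ must be read with $T$ extended from $\mathcal K$ to $\mathcal H$ by zero on $\mathcal K^\perp$ (equivalently, read as $TP_{\mathcal K}$), which is consistent with the identification $T\leftrightarrow A_T$ via $T=P_{\mathcal K}A_T P_{\mathcal K}$ and leaves the computation above intact.
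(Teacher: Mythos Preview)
Your proposal is correct and follows essentially the same approach as the paper: both substitute the formula $P_{\mathcal K_m}=\sum_{|\lambda|=m}S_\lambda P_{\mathcal K}S_\lambda^*$ from Proposition~\ref{pr2.1}(iv), commute $A_T$ past $S_{\lambda'}$, collapse the double sum via $S_\lambda^*S_{\lambda'}=\delta_{\lambda\lambda'}I$, and then identify $P_{\mathcal K}A_TP_{\mathcal K}=T$. For the convergence part the paper invokes a general lemma on SOT-convergence of products, whereas you give the same estimate directly; the content is identical.
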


\begin{proof}
Using Proposition \ref{pr2.1}(iv), and the fact that $A_T$ commutes with the representation, we have
$$P_{\mathcal K_m}A_TP_{\mathcal K_m}=\sum_{|\lambda|=m}\sum_{|\lambda'|=m}S_\lambda P_{\mathcal K}S_{\lambda}^*A_TS_{\lambda'}P_{\mathcal K}S_{\lambda'}^*
=\sum_{|\lambda|=m}\sum_{|\lambda'|=m}S_\lambda P_{\mathcal K}S_{\lambda}^*S_{\lambda'}A_TP_{\mathcal K}S_{\lambda'}^*$$$$=\sum_{|\lambda|=m}S_\lambda P_{\mathcal K}A_TP_{\mathcal K}S_{\lambda}^*=\sum_{|\lambda|=m}S_\lambda TS_{\lambda}^*.$$

The following Lemma is well known, see e.g., \cite[Section I.6]{Dav96}.
\begin{lemma}\label{lem2.3}
Let $(A_n)_{n\in\bn}$, $(B_n)_{n\in\bn}$ be sequences of bounded operators on some Hilbert space $\mathcal H$. Assume that $A_n\rightarrow A$ and $B_n\rightarrow B$ in the Strong Operator Topology (SOT), i.e., $A_n\xi\rightarrow A\xi$, $B_n\xi\rightarrow B\xi$, for all vectors $\xi\in \mathcal H$. Then $A_nB_n\rightarrow AB$ in the SOT. 
\end{lemma}

Since $P_{\mathcal K_m}\rightarrow I_{\mathcal H}$ in the SOT, with Lemma \ref{lem2.3}, we obtain that $P_{\mathcal K_m}A_TP_{\mathcal K_m}\rightarrow A_T$ in SOT. 
\end{proof}

\begin{proposition}\label{pr2.4}
With the notations as in Theorem \ref{th1.1}, let $T,T'\in\mathcal B(\mathcal K)^{\bm \sigma}$ and let $A$ and $A'$, respectively, be the associated operators in the commutant of the Cuntz dilation, so $T=P_{\mathcal K}AP_{\mathcal K}$ and $T'=P_{\mathcal K}A'P_{\mathcal K}$. Then $AA'$ is also in the commutant of the Cuntz dilation, so $T\ast T':=P_{\mathcal K}AA'P_{\mathcal K}$ is an element in $\mathcal B(\mathcal K)^{\bm \sigma}$. Then, 
\begin{equation}
(P_{\mathcal K_m}AP_{\mathcal K_m})(P_{\mathcal K_m}A'P_{\mathcal K_m})\xi\rightarrow AA'\xi,\quad (\xi\in\mathcal H),
\label{eq2.4.1}
\end{equation}
and 
\begin{equation}
(T\ast T')\xi=\lim_{m\rightarrow\infty}\sum_{|\lambda|=m}V_\lambda TT'V_\lambda^*\xi,\quad (\xi\in\mathcal K).
\label{eq2.4.2}
\end{equation}
\end{proposition}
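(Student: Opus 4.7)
The plan is to dispatch the three assertions in order, leaning on Proposition~\ref{pr2.2}, the Cuntz relations, and the intertwining identity $V_\lambda^* = S_\lambda^* P_{\mathcal K}$. That $AA'$ lies in the commutant $\{S_\lambda\}'$ is immediate, since the commutant of any family of operators is an algebra: $A S_\lambda = S_\lambda A$ and $A' S_\lambda = S_\lambda A'$ force $(AA')S_\lambda = S_\lambda(AA')$. The isometric order isomorphism from Theorem~\ref{th1.1} then places $P_{\mathcal K} AA' P_{\mathcal K}$ into $\mathcal B(\mathcal K)^{\bm \sigma}$, legitimating the notation $T\ast T'$.

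For \eqref{eq2.4.1}, I would apply Proposition~\ref{pr2.2} separately to $T$ and to $T'$: this yields $P_{\mathcal K_m} A P_{\mathcal K_m}\to A$ and $P_{\mathcal K_m} A' P_{\mathcal K_m}\to A'$ in the strong operator topology. Both sequences are uniformly norm-bounded (by $\|A\|$ and $\|A'\|$, since they are sandwiched between orthogonal projections), so joint SOT-continuity of multiplication on bounded sets, as recorded in Lemma~\ref{lem2.3}, gives $(P_{\mathcal K_m}AP_{\mathcal K_m})(P_{\mathcal K_m}A'P_{\mathcal K_m})\xi\to AA'\xi$ for every $\xi\in\mathcal H$.

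For the formula \eqref{eq2.4.2}, my plan is to start from \eqref{eq2.2.1}, writing
$$ P_{\mathcal K_m}AP_{\mathcal K_m}=\sum_{|\lambda|=m}S_\lambda T S_\lambda^*,\qquad P_{\mathcal K_m}A'P_{\mathcal K_m}=\sum_{|\mu|=m}S_\mu T' S_\mu^*, $$
and multiplying. The Cuntz relations, applied inductively to words of length $m$, give $S_\lambda^* S_\mu = \delta_{\lambda\mu} I_{\mathcal H}$ when $|\lambda|=|\mu|=m$, so the resulting double sum collapses to $\sum_{|\lambda|=m} S_\lambda T T' S_\lambda^*$. Now take $\xi\in\mathcal K$: by the defining property $S_\lambda^* P_{\mathcal K}=V_\lambda^*$, we have $S_\lambda^*\xi=V_\lambda^*\xi\in\mathcal K$, and then $TT' V_\lambda^*\xi\in\mathcal K$ because $T,T'\in\mathcal B(\mathcal K)$. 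Applying $P_{\mathcal K}$ to each term replaces the outer $S_\lambda$ by its compression $P_{\mathcal K}S_\lambda|_{\mathcal K}=V_\lambda$ (the adjoint of $V_\lambda^*=S_\lambda^* P_{\mathcal K}$), producing $\sum_{|\lambda|=m} V_\lambda TT' V_\lambda^* \xi$. Finally, applying $P_{\mathcal K}$ to both sides of \eqref{eq2.4.1} at $\xi\in\mathcal K$ and noting $P_{\mathcal K}AA'\xi = P_{\mathcal K}AA'P_{\mathcal K}\xi = (T\ast T')\xi$ completes the identification in the limit.

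The arguments are essentially bookkeeping; the only place that requires care is the passage from $\sum S_\lambda T T' S_\lambda^*$ on $\mathcal H$ down to $\sum V_\lambda T T' V_\lambda^*$ on $\mathcal K$. This is where one must genuinely use that $T,T'$ preserve $\mathcal K$, so that every intermediate vector in the sum stays inside $\mathcal K$ and the projection onto $\mathcal K$ at the outer position swaps $S_\lambda$ for $V_\lambda$ cleanly, rather than introducing cross terms from $\mathcal K^\perp$.
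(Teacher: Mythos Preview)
Your proposal is correct and follows essentially the same route as the paper's proof: both invoke Proposition~\ref{pr2.2} with Lemma~\ref{lem2.3} for \eqref{eq2.4.1}, then multiply the expressions $\sum_{|\lambda|=m}S_\lambda T S_\lambda^*$ and $\sum_{|\mu|=m}S_\mu T' S_\mu^*$, collapse via the Cuntz relations, and use $S_\lambda^*P_{\mathcal K}=V_\lambda^*$ together with $P_{\mathcal K}S_\lambda|_{\mathcal K}=V_\lambda$ to pass to the compressed form on $\mathcal K$.
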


\begin{proof}
The limit in \eqref{eq2.4.1} follows from Proposition \ref{pr2.2} and Lemma \ref{lem2.3}. Then, with Lemma \ref{lem2.3}, \eqref{eq2.2.1}, for $\xi\in \mathcal{K} $,
$$P_{\mathcal K}AA'P_{\mathcal K}\xi=\lim_{m\rightarrow\infty}P_{\mathcal K}P_{\mathcal K_m}AP_{\mathcal K_m}P_{\mathcal K_m}A'P_{\mathcal K_m}P_{\mathcal K}\xi=
\lim_{m\rightarrow\infty}P_{\mathcal K}\sum_{|\lambda|=m}S_\lambda TS_\lambda^*\sum_{|\lambda'|=m}S_{\lambda'} T'S_{\lambda'}^*P_{\mathcal K}\xi$$
$$=\lim_{m\rightarrow\infty}P_{\mathcal K}\sum_{|\lambda|=m}\sum_{|\lambda'|=m}S_\lambda TS_\lambda^*S_{\lambda'} T'S_{\lambda'}^*P_{\mathcal K}\xi=\lim_{m\rightarrow\infty}P_{\mathcal K}\sum_{|\lambda|=m}S_\lambda TT'S_\lambda^*P_{\mathcal K}\xi$$
$$=\lim_{m\rightarrow\infty}\sum_{|\lambda|=m}P_{\mathcal K}S_\lambda TT'V_\lambda^*\xi=\lim_{m\rightarrow\infty}\sum_{|\lambda|=m}V_\lambda TT'V_\lambda^*\xi.$$
\end{proof}

\section{An explicit construction of the Cuntz dilation associated to a random walk}

Consider now, as in Definition \ref{def1.3} a directed graph $\mathcal G=(\mathcal V, \mathcal E)$, with edges labeled from a finite set $\Lambda$, $|\Lambda|=N$. Recall, that we assume that for each vertex $i$, different labels $\lambda_1$, $\lambda_2$, lead, from $i$ to different vertices, so, if $i\stackrel{\lambda_1}{\rightarrow}j_1$, and $i\stackrel{\lambda_2}{\rightarrow}j_2$, then $j_1\neq j_2$. We write $j=i\cdot\lambda$ if $i\stackrel{\lambda}{\rightarrow}j$. Also, we assume that for each vertex $i$ there is at most one edge coming into the vertex $i$ with label $\lambda$. 

Each edge has an associated weight $|\alpha_{i,\lambda}|^2$ defined by some complex number $\alpha_{i,\lambda}$, $\alpha_{i,\lambda}=0$ in the case when there is no edge from $i$, labeled $\lambda$, and 
\begin{equation}
\sum_{\lambda\in\Lambda}|\alpha_{i,\lambda}|^2=1,\quad (i\in\mathcal V).
\label{eqde3.3.1}
\end{equation}

We recall that we define the operators $(V_\lambda)_{\lambda\in\Lambda}$ on the Hilbert space $\mathcal K=l^2[\mathcal V]$,

\begin{equation}
V_\lambda^*(\vec i)=\left\{\begin{array}{cc}
	\alpha_{i,\lambda}\vec j,&\mbox{ if }i\stackrel{\lambda}{\rightarrow}j\\
	0,&\mbox{ otherwise.}
\end{array}\right.
\label{eqde3.3.2}
\end{equation}

\begin{proposition}\label{pr3.1}
The operators $(V_\lambda)_{\lambda\in\Lambda}$ form a row coisometry. 
\end{proposition}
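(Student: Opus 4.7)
The plan is to verify \eqref{eqde1.2.1} by a direct computation on the orthonormal basis $\{\vec i : i\in\mathcal V\}$ of $\mathcal K=\ell^2[\mathcal V]$. The formula \eqref{eqde3.3.2} specifies $V_\lambda^*$ on this basis, so the first step is to extract an explicit formula for $V_\lambda$ itself via duality. For vertices $i, i'$, I would compute
$$\ip{V_\lambda \vec i}{\vec{i'}}=\ip{\vec i}{V_\lambda^*\vec{i'}}=\begin{cases}\cj{\alpha_{i',\lambda}}\ip{\vec i}{\vec j},& \text{if } i'\stackrel{\lambda}{\rightarrow}j,\\ 0,&\text{otherwise.}\end{cases}$$
This is nonzero only if $i=j=i'\cdot\lambda$, and here the hypothesis that \emph{at most one} edge with label $\lambda$ enters $i$ is essential — it guarantees that there is at most one $i'$ producing a nonzero inner product, and therefore $V_\lambda \vec i$ is well defined on the basis by
$$V_\lambda \vec i=\begin{cases}\cj{\alpha_{i',\lambda}}\,\vec{i'},&\text{if there exists (unique) $i'$ with }i'\stackrel{\lambda}{\rightarrow}i,\\ 0,&\text{otherwise.}\end{cases}$$

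Next, I would compute $V_\lambda V_\lambda^*\vec i$ case by case. If $i$ has an outgoing edge $i\stackrel{\lambda}{\rightarrow}j$, then $V_\lambda^*\vec i=\alpha_{i,\lambda}\vec j$, and since $i$ is now the unique vertex from which an edge labeled $\lambda$ arrives at $j$, applying the formula just derived gives $V_\lambda\vec j=\cj{\alpha_{i,\lambda}}\vec i$, hence $V_\lambda V_\lambda^*\vec i=|\alpha_{i,\lambda}|^2\vec i$. If $i$ has no outgoing $\lambda$-edge, then $V_\lambda^*\vec i=0$ and $\alpha_{i,\lambda}=0$, so the identity $V_\lambda V_\lambda^*\vec i=|\alpha_{i,\lambda}|^2\vec i$ holds trivially in that case too.

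Finally, summing over $\lambda\in\Lambda$ and invoking the normalization \eqref{eqde3.3.1},
$$\sum_{\lambda\in\Lambda}V_\lambda V_\lambda^*\vec i=\sum_{\lambda\in\Lambda}|\alpha_{i,\lambda}|^2\,\vec i=\vec i,$$
for every $i\in\mathcal V$. By linearity and boundedness this extends to all of $\mathcal K$, yielding $\sum_\lambda V_\lambda V_\lambda^*=I_{\mathcal K}$.

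The argument is really just bookkeeping; there is no serious obstacle. The only subtlety is making sure that the incoming-edge hypothesis is used exactly where needed (to identify $V_\lambda$ as a basis-to-basis map) and that the weight condition \eqref{eqde3.3.1} closes the sum. No use of the cyclicity or dilation results from Theorem \ref{th1.1} is required for this proposition.
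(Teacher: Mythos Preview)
Your proof is correct and follows essentially the same approach as the paper: derive the explicit formula for $V_\lambda$ from that of $V_\lambda^*$ via duality, then verify $\sum_\lambda V_\lambda V_\lambda^*\vec i=\vec i$ on basis vectors using the normalization \eqref{eqde3.3.1}. Your version is more explicit about where the incoming-edge uniqueness hypothesis is used, but the argument is the same.
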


\begin{proof}
A simple computation shows that 
\begin{equation}
V_\lambda(\vec j)=\left\{\begin{array}{cc}
\cj\alpha_{i,\lambda} \vec i,&\mbox{ if }i\stackrel{\lambda}{\rightarrow}j,\\
0,&\mbox{ otherwise.}
\end{array}\right.
\label{eq3.3.3}
\end{equation}
Then, for $i\in\mathcal V$,
$$\sum_{\lambda\in\Lambda}V_\lambda V_\lambda^*\vec i=\sum_{\lambda\in\Lambda}\alpha_{i,\lambda} V_\lambda(\vec{i\cdot\lambda})=\sum_{\lambda\in\Lambda}|\alpha_{i,\lambda}|^2\vec i=\vec i.$$
\end{proof}

Since $(V_\lambda)_{\lambda\in\Lambda}$ is a row  coisometry, by Theorem \ref{th1.1}, it has a unique Cuntz dilation. In this section we will give an explicit construction of the Cuntz dilation associated to this random walk, under a certain mild assumption. 

We will need some notation. Recall that, for a word $\lambda=\lambda_1\lambda_2\dots\lambda_l$, we define 
\begin{equation}
\alpha_{i,\lambda}=\alpha_{i,\lambda_1}\alpha_{i\cdot\lambda_1,\lambda_2}\dots\alpha_{i\cdot\lambda_1\cdot\lambda_2\cdots\lambda_{l-1},\lambda_l}.
\label{eq3.3.4}
\end{equation}

Note that $|\alpha_{i,\lambda}|^2$ is the probability that, starting from the vertex $i$, the random walk follows the labels $\lambda_1,\lambda_2,\dots,\lambda_l$. 

For each vertex $i$, let $\Lambda_i$ be the set of labels that originate from $i$, 
\begin{equation}
\Lambda_i:=\left\{\lambda\in\Lambda : \mbox{ There exists $j$ such that }i\stackrel{\lambda}{\rightarrow}j\right\}.
\label{eq3.3.5}
\end{equation}
For each vertex $j$, let $\Lambda^j$ be the set of all labels that end in $j$,
\begin{equation}
\Lambda^j:=\left\{\lambda\in\Lambda : \mbox{ There exists $i$ such that }i\arr{\lambda}j\right\}.
\label{eq3.3.6}
\end{equation}

Further, we define $n_i:=|\Lambda_i|$ and $n^j:=|\Lambda^j|$. 

To construct the Cuntz dilation, first we will construct, for each vertex $i$, some unitary matrix that has, as the first column, the weights $(\alpha_{i,\lambda})_{\lambda\in\Lambda_i}$. Indeed, since 
$$\sum_{\lambda\in\Lambda_i}|\alpha_{i,\lambda}|^2=1,$$
we may create the following unitary matrix (not necessarily unique)
\begin{equation}
C_i= \begin{bmatrix}
	\alpha_{i,\lambda_0}&c_{i,\lambda_0}^1&c_{i,\lambda_0}^2&\cdots&c_{i,\lambda_0}^{n_i-1}\\
	\alpha_{i,\lambda_1}&c_{i,\lambda_1}^1&c_{i,\lambda_1}^2&\cdots&c_{i,\lambda_1}^{n_i-1}\\
	\vdots&\vdots&\vdots&\ddots&\vdots\\
	\alpha_{i,\lambda_{n_i-1}}&c_{i,\lambda_{n_i-1}}^1&c_{i,\lambda_{n_i-1}}^2&\cdots&c_{i,\lambda_{n_i-1}}^{n_i-1}
\end{bmatrix}=\left[c_{i,\lambda}^k\right]_{\lambda\in\Lambda_i}^{k=0,\dots,n_i-1},
\label{eq3.3.7}
\end{equation}
where we adopt the notation $c_{i,\lambda_j}^0:=\alpha_{i,\lambda_j}$.

To define the Hilbert space of the Cuntz dilation, we will use the set $\Omega_N^*$, defined as the set of finite words over the alphabet $\{0,1,\dots, N-1\}$ not ending in 0, including the empty word.

For a digit $k\in\{0,\dots,N-1\}$ and a word $w\in\Omega_N^*$, define $kw\in\Omega_N^*$ as the concatenation of $k$ and $w$. We make the {\bf important convention}:
$$0\ty=\ty.$$
Additionally, we define the ``inverse concatenation'', $\setminus:\Omega_N^*\times\{0,\dots,N-1\}\rightarrow \Omega_N^*\cup\{\nul\}$, 
$$\setminus(w,k):=w\setminus k:=\left\{\begin{array}{cc}
w',&\mbox{ if }kw'=w,\\
\nul,&\mbox{ otherwise.}\end{array}\right.$$
Note that $\ty\setminus 0=\ty$ and $\ty\setminus k=\nul$ for $k\neq 0$. 

Expanding our notation, we define 
$$\lambda\cdot j:=\left\{\begin{array}{cc}
i,&\mbox{ if }i\arr{\lambda}j\\
\nul,&\mbox{ otherwise.}\end{array}\right.
$$
$$i\cdot\lambda:=\left\{\begin{array}{cc}
j,&\mbox{ if }i\arr{\lambda}j\\
\nul,&\mbox{ otherwise.}\end{array}\right.
$$
\medskip
We define the Hilbert space of the dilation as 
$$\mathcal H=l^2[\mathcal V\times\Omega_N^*]=\Span\{(i,w) : i \mbox{ vertex in }\mathcal V, w\in\Omega_N^*\}.$$
We identify $\mathcal K$ with 
$$\mathcal K=\Span\{(i,\ty) : i\in\mathcal V\}.$$

\begin{remark} 
For a fixed $w\in\Omega_N^*$, $w\setminus k\neq \nul$ for exactly one digit $k\in\{0,\dots,N-1\}$. 
We use the notation 
$$\delta_{w}^{w'}=\left\{\begin{array}{cc} 1,&\mbox{ if }w=w'\\ 0,&\mbox{ if }w\neq w',\end{array}\right. (w,w'\in\Omega).$$
We define also $\delta_w^{w'}$ when $w=\nul$ or $w'=\nul$, by making the conventions:
$$(\nul, \nul) = (\nul,w)=(i,\nul)=0 \in \mathcal{H},\quad \delta_i^{\nul}=\delta_w^{\nul}=\delta_{\nul}^{\nul}=0,$$
for all vertices $i$ and all words $w$.

Note also that 
$$\delta_{kw}^{w'}=\delta_{w}^{w'\setminus k},$$
$$\delta_{i}^{i'}=\delta_{i\cdot\lambda}^{i'\cdot\lambda},\quad(\lambda\in\Lambda_i),\quad 
\delta_{j}^{j'}=\delta_{\lambda\cdot j}^{\lambda\cdot j'},\quad(\lambda\in\Lambda^j).$$
\end{remark}

We will make the following assumption:
\begin{equation}
\sum_{i\in\mathcal V}(N-n_i)=\sum_{j\in\mathcal V}(N-n^j).
\label{eq3.3.8}
\end{equation}

In this case, consider the two sets
$$\left\{(j,\lambda) : j\in\mathcal{V}, \lambda\in\Lambda\setminus\Lambda^j\right\},\quad \left\{(i,k) : i\in\mathcal V, n_i\leq k\leq N-1\right\}.$$
Note that the first set has cardinality $\sum_{j\in\mathcal V}(N-n^j)$, and the second set has cardinality $\sum_{i\in\mathcal V}(N-n_i)$. Therefore, under assumption \eqref{eq3.3.8}, the two sets have equal cardinality, so there is a bijection between them
$$\varphi:\left\{(j,\lambda) : j\in\mathcal{V}, \lambda\in\Lambda\setminus\Lambda^j\right\}\rightarrow \left\{(i,k) : i\in\mathcal V, n_i\leq k\leq N-1\right\}.$$ We define:
\begin{equation}
F=\pi_1\circ \varphi, G=\pi_2\circ\varphi, \tilde F=\pi_1\circ\varphi^{-1},\tilde G=\pi_2\circ\varphi^{-1},
\label{eq3.3.9}
\end{equation}
where $\pi_1,\pi_2$ are the projections onto the first and second component. 

\begin{remark}\label{rem3.3}
Note that, when the graph is finite, the assumption \eqref{eq3.3.8} holds. Indeed, each edge is completely and uniquely determined by its starting vertex $i$ and a label in $\Lambda_i$. Therefore the number of edges is $\sum_{i\in\mathcal V}n_i$. On the other hand each edge is completely and uniquely determined by its end vertex $j$ and a label in $\Lambda^j$, therefore the number of edges is also $\sum_{j\in\mathcal V}n^j$. Since the graph is finite, this implies \eqref{eq3.3.8}.
\end{remark}

\begin{theorem}\label{th3.4}
Suppose the assumption \eqref{eq3.3.8} holds. Define the operators $(S_\lambda)_{\lambda\in\Lambda}$ on $\mathcal H$ by 
\begin{equation}
S_\lambda(j,w)=\left\{\begin{array}{cc}
\sum_{k=0}^{n_i-1}\cj{c_{i,\lambda}^k}(i,kw),&\mbox{ if }i\arr{\lambda}j,\\
(F(j,\lambda),G(j,\lambda)w),&\mbox{ otherwise.}
\end{array}\right.
\label{eq3.3.10}
\end{equation}
Then $(S_\lambda)_{\lambda\in\Lambda}$ is the Cuntz dilation of the row coisometry $(V_\lambda)_{\lambda\in\Lambda}$ in \eqref{eq3.3.3}. Moreover, the adjoint operators are
\begin{equation}
S_\lambda^*(i,k'w)=\left\{\begin{array}{cc}
\sum_{k=0}^{n_i-1}c_{i,\lambda}^k(j,k'w\setminus k),&\mbox{ if }i\arr{\lambda}j,\\
(\tilde F(i,k'),w),&\mbox{ if }\lambda=\tilde G(i,k'),\\
0,&\mbox{ otherwise.}
\end{array}
\right.
\label{eq3.3.11}
\end{equation}
\end{theorem}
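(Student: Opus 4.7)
The plan is to verify directly on basis vectors that $(S_\lambda)_{\lambda\in\Lambda}$ is a family of isometries satisfying the Cuntz relations, that $\mathcal{K}$ is invariant under every $S_\lambda^*$ with $S_\lambda^*|_{\mathcal{K}}=V_\lambda^*$, and that $\mathcal{K}$ is cyclic; the conclusion then follows immediately from the uniqueness clause of Theorem \ref{th1.1}. The adjoint formula \eqref{eq3.3.11} is obtained by expanding $\ip{S_\lambda(j,w)}{(m,k'w')}$ from \eqref{eq3.3.10}: Case~A of \eqref{eq3.3.10} produces matches only when $k'<n_m$ with $\lambda\in\Lambda_m$, and Case~B only when $k'\geq n_{F(j,\lambda)}=n_m$ with $\lambda=\tilde G(m,k')$, so the two branches of \eqref{eq3.3.11} are automatically mutually exclusive and well defined.

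To verify $\sum_\lambda S_\lambda S_\lambda^*=I$ on a basis vector $(m,k'w')$, I split on whether $k'<n_m$ or $k'\geq n_m$. In the first case only $\lambda\in\Lambda_m$ contributes and the sum becomes
\[
\sum_{\lambda\in\Lambda_m}c_{m,\lambda}^{k'}\,S_\lambda(m\cdot\lambda,w')=\sum_{k=0}^{n_m-1}\Bigl(\sum_{\lambda\in\Lambda_m}c_{m,\lambda}^{k'}\cj{c_{m,\lambda}^{k}}\Bigr)(m,kw')=(m,k'w'),
\]
where the inner sum collapses to $\delta_k^{k'}$ by orthonormality of the columns of the unitary matrix $C_m$. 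In the second case only the single label $\lambda=\tilde G(m,k')$ contributes, and $S_{\tilde G(m,k')}(\tilde F(m,k'),w')=(m,k'w')$ follows from Case~B of \eqref{eq3.3.10} together with $\varphi\circ\varphi^{-1}=\mathrm{id}$. The relations $S_\lambda^*S_\mu=\delta_{\lambda\mu}I$ are checked similarly: row orthonormality of each $C_i$ handles the Case~A/Case~A combination, bijectivity of $\varphi$ kills the Case~B/Case~B combination for $\lambda\neq\mu$, and the mixed combination vanishes because a match would need $k<n_i$ (from Case~A) and simultaneously $G(\cdot)\geq n_i$ (from Case~B).

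For the dilation properties, writing $(i,\ty)=(i,0\ty)\in\mathcal{K}$ and applying \eqref{eq3.3.11} gives $S_\lambda^*(i,\ty)=\alpha_{i,\lambda}(i\cdot\lambda,\ty)$ when $\lambda\in\Lambda_i$ and $0$ otherwise, which is exactly $V_\lambda^*\vec i$; this also shows invariance of $\mathcal{K}$ under each $S_\lambda^*$. The case $n_i=0$ never arises because the row-coisometry relation $\sum_{\lambda\in\Lambda_i}|\alpha_{i,\lambda}|^2=1$ forces $\Lambda_i\neq\emptyset$. Cyclicity of $\mathcal{K}$ is then proved by induction on $m$, showing the space $\mathcal{K}_m$ of Proposition \ref{pr2.1} contains every basis vector $(i,w)$ with $|w|\leq m$. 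For the inductive step, given $(i,kw')$ with $|w'|=m$: if $k<n_i$, inverting the same column-orthonormality identity yields $(i,kw')=\sum_{\lambda\in\Lambda_i}c_{i,\lambda}^{k}\,S_\lambda(i\cdot\lambda,w')$ with $(i\cdot\lambda,w')\in\mathcal{K}_m$; if $k\geq n_i$, then directly $(i,kw')=S_{\tilde G(i,k)}(\tilde F(i,k),w')$ with $(\tilde F(i,k),w')\in\mathcal{K}_m$.

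The main obstacle is the combinatorial bookkeeping between the cases in \eqref{eq3.3.10}, the ranges $k<n_i$ versus $k\geq n_i$, and the bijection $\varphi$, all in the presence of the convention $0\ty=\ty$. Once this case analysis is set up so that every branch is handled consistently, each identity collapses to either the unitarity of some single $C_i$ or to $\varphi\circ\varphi^{-1}=\mathrm{id}$, and the uniqueness clause of Theorem \ref{th1.1} finishes the argument.
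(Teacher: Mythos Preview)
Your proposal is correct and follows essentially the same approach as the paper's own proof: both compute the adjoint formula via inner products, verify the two Cuntz relations by the same case split on $k'<n_i$ versus $k'\geq n_i$ (reducing respectively to unitarity of $C_i$ and bijectivity of $\varphi$), check $S_\lambda^*|_{\mathcal K}=V_\lambda^*$ directly, and prove cyclicity by induction on word length. Your remark that $n_i=0$ is impossible and your explicit mention of the mixed Case~A/Case~B vanishing are small clarifications not spelled out in the paper, but the argument is otherwise identical.
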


\begin{proof}
First, we compute the adjoints. Let $\lambda\in\Lambda$, $j,i'\in\mathcal V$, $w,w'\in\Omega_N^*$. Case 1: $\lambda\in\Lambda^j$; then denote $i\arr\lambda j$.
$$\ip{(j,w)}{S_\lambda^*(i',w')}=\ip{S_\lambda(j,w)}{(i',w')}=\ip{\sum_{k=0}^{n_i-1}\cj{c_{i,\lambda}^k}(i,kw)}{(i',w')}$$
$$=\sum_{k=0}^{n_i-1}\cj{c_{i,\lambda}^k}\delta_i^{i'}\delta_{kw}^{w'}=\sum_{k=0}^{n_{i'}-1}\cj{c_{i',\lambda}^k}\delta_{j}^{i'\cdot\lambda}\delta_w^{w'\setminus k}=\ip{(j,w)}{\delta_{j}^{i'\cdot \lambda}\sum_{k=0}^{n_{i'}-1}c_{i',\lambda}^k(i'\cdot\lambda,w'\setminus k)}.$$
Case 2: $\lambda\not\in\Lambda^j$. Then
$$\ip{(j,w)}{S_\lambda^*(i',k'w')}=\ip{S_\lambda(j,w)}{(i',k'w')}=\ip{(F(j,\lambda),G(j,\lambda)w)}{(i',kw')}$$
$$=\delta_{F(j,\lambda)}^{i'}\delta_{G(j,\lambda)}^{k'}\delta_{w}^{w'}=\delta_{\varphi(j,\lambda)}^{(i',k')}\delta_{w}^{w'}=\delta_{j}^{\tilde F(i',k')}\delta_{\lambda}^{\tilde G(i',k')}\delta_w^{w'}=\ip{(j,w)}{\delta_\lambda^{\tilde G(i',k')}(\tilde F(i',k'),w')}.$$
This proves \eqref{eq3.3.11}.

Now, we verify the Cuntz relations. Let $j\in\mathcal V$, $w\in\Omega_N^*$, $\lambda,\lambda'\in\Lambda$. Case 1: $\lambda\in\Lambda^j$. Then we take $i$ such that $i\arr\lambda j$. Then 
$$S_{\lambda'}^*S_{\lambda}(j,w)=S_{\lambda'}^*\sum_{k=0}^{n_i-1}\cj{c_{i,\lambda}^k}(i,kw)=\sum_{k'=0}^{n_i-1}\sum_{k=0}^{n_i-1}\cj{c_{i,\lambda}^k}c_{i,\lambda'}^{k'}(i\cdot\lambda',kw\setminus k')$$
$$=\sum_{k=0}^{n_i-1}\cj{c_{i,\lambda}^k}c_{i,\lambda'}^{k}(i\cdot\lambda',w)=\delta_\lambda^{\lambda'}(j,w),$$
where we used the orthogonality of the matrix $C_i$ in the last equality.

Case 2: $\lambda\not\in\Lambda^j$. Then, since $\varphi$ is a bijection, 
$$S_{\lambda'} ^* S_\lambda(j,w)=S_{\lambda'}^*(F(j,\lambda),G(j,\lambda)w)=\delta_{\lambda}^{\lambda'}(j,w).$$

Now, we check the second Cuntz relation. Let $i\in\mathcal V$, $k\in\{0,\dots,N-1\}$, $w\in\Omega_N^*$. Case 1: $k\geq n_i$. Then
$$\sum_{\lambda\in\Lambda}S_\lambda S_\lambda^*(i,kw)=\sum_{\lambda\in\Lambda_i}S_\lambda S_\lambda^*(i,kw)+\sum_{\lambda\not\in\Lambda_i}S_\lambda S_\lambda^*(i,kw)$$
$$=\sum_{\lambda\in\Lambda_i}S_\lambda\sum_{k'=0}^{n_i-1}c_{i,\lambda}^{k'}(i\cdot\lambda,kw\setminus k')+\sum_{\lambda\not\in\Lambda_i}S_\lambda \delta_{\lambda}^{\tilde G(i,k)}(\tilde F(i,k),w)$$
$$=0+S_{\tilde G(i,k)}(\tilde F(i,k),w)=(i,kw).$$

Case 2: $k<n_i$. Then
$$\sum_{\lambda\in\Lambda}S_\lambda S_\lambda^*(i,kw)=\sum_{\lambda\in\Lambda_i}S_\lambda S_\lambda^*(i,kw)+\sum_{\lambda\not\in\Lambda_i}S_\lambda S_\lambda^*(i,kw)$$
$$=\sum_{\lambda\in\Lambda_i}S_\lambda\sum_{k'=0}^{n_i-1}c_{i,\lambda}^{k'}(i\cdot\lambda,kw\setminus k')+0=\sum_{\lambda\in\Lambda_i}c_{i,\lambda}^kS_\lambda(i\cdot \lambda,w)$$
$$=\sum_{\lambda\in\Lambda_i}\sum_{k'=0}^{n_i-1}\cj{c_{i,\lambda}^{k'}}c_{i,\lambda}^k(i,k'w)=\sum_{k'=0}^{n_i-1}\delta_{k}^{k'}(i,k'w)=(i,kw).$$
Thus, the Cuntz relations are satisfied. 

It is clear from \eqref{eq3.3.11}, that $S_\lambda^*(i,\ty)=c_{i,\lambda}^0 (j,\ty)$, if $i\arr\lambda j$, (recall the convention $\ty=0\ty$), and $S_\lambda^*(i,\ty)=0$ if $\lambda\not\in\Lambda_i$. Therefore $S_\lambda^*$ coincides with $V_\lambda^*$ on $\mathcal K$. 

It remains to prove that $\mathcal K$ is cyclic for the representation. We will prove, by induction, that, for all words $w\in\Omega_N^*$ of length $|w|=m$ and all vertices $i\in\mathcal V$, $(i,w)$ is in $$\mathcal S:=\Span\left\{ S_{\lambda_1\dots\lambda_p}\mathcal K : \lambda_1,\dots,\lambda_p\in\Lambda,p\geq 0\right\}.$$
For $m=0$, we have $(i,w)=(i,\ty)\in\mathcal K$, so the assertion is clear. 

Assume now, that the assertion is true for $m$. Let $kw$ an arbitrary word of length $m+1$, and $i\in\mathcal V$. Case 1: $k\geq n_i$. Take $(j,\lambda)=\varphi^{-1}(i,k)$. Then 
$$(i,kw)=(F(j,\lambda),G(j,\lambda)w)=S_\lambda(j,w)\in\mathcal S.$$

Case 2: $k<n_i$. 
$$(i,kw)=\sum_{\lambda\in\Lambda_i}\sum_{k'=0}^{n_i-1}c_{i,\lambda}^k\cj{c_{i,\lambda}^{k'}}(i,k'w)=\sum_{\lambda\in\Lambda_i} c_{i,\lambda}^kS_{\lambda}(i\cdot\lambda,w)\in\mathcal S.$$

By induction, if follows that $\mathcal S=\mathcal H$, which means that $\mathcal K$ is cyclic for the representation.
\end{proof}

\section{Intertwining operators}

The main goal in this section is to prove Theorem \ref{th4.12}, which describes the intertwining operators between the Cuntz dilations associated to two random walks. We begin with some properties of invariant sets. We assume from this point on that the graphs are finite.

\begin{proposition}\label{pr4.2}
The invariant sets have the following properties
\begin{enumerate}
	\item[(i)] Every invariant set contains a minimal invariant subset. 
	\item[(ii)] For $(i,i')\in\mathcal V\times \mathcal V'$, its orbit $\mathcal O(i,i')$ is invariant. 
	\item[(iii)] If $\mathcal M$ is a minimal invariant set and $(i,i')\in\mathcal M$, then $\mathcal O(i,i')=\mathcal M$; in other words, if $(j,j')\in\mathcal M$, then there exists a possible transition $(i,i')\arr\lambda(j,j')$.
	\item[(iv)] For every pair of vertices $(i,i')\in\mathcal V\times\mathcal V'$, there exists a minimal invariant set $\mathcal M$, such that for every pair of vertices $(j,j')\in\mathcal M$ the transition $(i,i')\arr\lambda(j,j')$ is possible. 
\end{enumerate}

\end{proposition}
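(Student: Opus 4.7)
The approach splits naturally along the four parts, with (i)--(iii) being short structural observations and (iv) doing the real work.

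For (i), I exploit the standing assumption that the graphs are finite: any nonempty invariant set is then finite, so inside any given invariant set $\mathcal M$ one can pick a nonempty invariant subset of minimal cardinality, and such a subset has no proper invariant subset. For (ii), I first verify directly from Definition~\ref{def1.1} that the intersection of any nonempty family of invariant sets is itself invariant: if $(i,i')$ lies in every member of the family and $(i,i')\arr{\lambda}(j,j')$ is possible, then $(j,j')$ lies in every member and hence in the intersection. Since $\mathcal V\times\mathcal V'$ is itself invariant and contains $(i,i')$, the family of invariant sets containing $(i,i')$ is nonempty, and its intersection---the smallest invariant set containing $(i,i')$---is well-defined and invariant; this is precisely $\mathcal O(i,i')$. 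Part (iii) is then immediate: if $\mathcal M$ is a minimal invariant set containing $(i,i')$, then $\mathcal O(i,i')\subseteq\mathcal M$ is a nonempty invariant subset, so minimality of $\mathcal M$ forces $\mathcal O(i,i')=\mathcal M$.

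The substantive step is (iv). I first establish a concrete description of the orbit, namely
\begin{equation*}
\mathcal O(i,i')=\bigl\{(j,j')\in\mathcal V\times\mathcal V' : (i,i')\arr{\lambda}(j,j')\text{ is possible for some word }\lambda\bigr\},
\end{equation*}
where we allow the empty word (i.e.\ $n=0$ in Definition~\ref{def1.1}), so that the trivial transition $(i,i')\arr{\emptyset}(i,i')$ is always available. Call this right-hand side $\mathcal R$. To see $\mathcal O(i,i')\subseteq\mathcal R$, I check that $\mathcal R$ is invariant and contains $(i,i')$: if $(j,j')\in\mathcal R$ is witnessed by a word $\lambda$ and $(j,j')\arr{\mu}(k,k')$ is possible, concatenating gives $(i,i')\arr{\lambda\mu}(k,k')$, so $(k,k')\in\mathcal R$. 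The reverse inclusion $\mathcal R\subseteq\mathcal O(i,i')$ follows by a short induction on the length of the witnessing word, using invariance of $\mathcal O(i,i')$. With this description in hand, (iv) is routine: apply (i) to the invariant set $\mathcal O(i,i')$ to extract a minimal invariant subset $\mathcal M\subseteq\mathcal O(i,i')=\mathcal R$, and then every $(j,j')\in\mathcal M$ is by definition reachable from $(i,i')$ by a possible transition.

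I do not anticipate a genuine obstacle. The only mild subtlety arises in (iv) when the extracted minimal set $\mathcal M$ contains $(i,i')$ itself while no nonempty loop at $(i,i')$ exists; admitting the empty word as a transition (permitted by Definition~\ref{def1.1} with $n=0$) handles this cleanly. Otherwise, the argument is a sequence of standard closure-system observations specialized to the invariance notion of Definition~\ref{def1.1}.
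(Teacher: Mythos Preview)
Your proof is correct and follows essentially the same path as the paper's own proof: (i) by minimal cardinality, (iii) by sandwiching $\mathcal O(i,i')$ between itself and $\mathcal M$, and (iv) by applying (i) to $\mathcal O(i,i')$ and using the explicit reachability description of the orbit. The only cosmetic difference is that the paper treats the concrete description of $\mathcal O(i,i')$ as the set of points reachable from $(i,i')$ as implicit from the outset (its proof of (ii) already argues by concatenating paths), whereas you first establish invariance abstractly via intersections and then derive the reachability description separately; both are fine and neither adds genuinely new content.
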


\begin{proof}
(i) is obvious, just take an invariant subset of the smallest cardinality. 

(ii) If $(j,j')\in\mathcal O(i,i')$ and the transition $(j,j')\arr{\lambda_2}(k,k')$ is possible, then there is a possible transition $(i,i')\arr{\lambda_1}(j,j')$ and therefore, the transition $(i,i')\arr{\lambda_1\lambda_2}(k,k')$ is also possible, so $(k,k')\in\mathcal O(i,i')$.

(iii) Since $\mathcal M$ is invariant, it contains $\mathcal O(i,i')$. Since $\mathcal O(i,i')$ is invariant and $\mathcal M$ is minimal, $\mathcal M=\mathcal O(i,i')$. 

(iv) Consider $\mathcal O(i,i')$; it is an invariant set, therefore, it contains a minimal invariant set $\mathcal M$. Any point $(j,j')$ in $\mathcal M$ is in the orbit of $(i,i')$, so there is a possible transition from $(i,i')$ to $(j,j')$. 
\end{proof}

Heuristically, the next key lemma says that, for each pair of vertices $(i,i')$, with probability one, the random walk will reach a prescribed point in one of the minimal invariant sets. It is a generalization of the well known result that a finite irreducible random walk is recurrent. See Remark \ref{rem4.5.1} below. 
\begin{lemma}\label{lem4.3}
For each minimal invariant set $\mathcal M$, pick a point $(i_{\mathcal M},i_{\mathcal M}')$ in $\mathcal M$. For $(i,i')\in\mathcal V\times \mathcal V'$, define the following set of paths/words that do not go through any of the points $(i_{\mathcal M},i'_{\mathcal M})$:
\begin{multline}
A(i,i'):=\left\{\ \lambda=\lambda_1\dots\lambda_n : n\geq 0, (i\cdot \lambda_1\dots\lambda_k, i'\cdot\lambda_1\dots\lambda_k')\neq (i_{\mathcal M},i_{\mathcal M}')\right.\\
\left.\mbox{ for all $\mathcal M$ minimal invariant and $0\leq k\leq n$}\right\}.
\label{eq4.3.1}
\end{multline}
Define, for $(i,i')\in\mathcal V\times\mathcal V'$, $n\in\bn$,
\begin{equation}
P((i,i');n):=\sum_{|\lambda|=n, \lambda\in A(i,i')}|\alpha_{i,\lambda}||\alpha_{i',\lambda}'|.
\label{eq4.3.2}
\end{equation}
Then
\begin{equation}
\lim_{n\rightarrow\infty}P((i,i');n)=0.
\label{eq4.3.3}
\end{equation}
\end{lemma}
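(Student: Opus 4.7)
The plan is to interpret $P((i,i');n)$ as iterates of a sub-stochastic ``killed'' transition operator, show that the pointwise limit $f(i,i'):=\lim_n P((i,i');n)$ satisfies a harmonic-type equation, and then run a maximum principle that, together with Proposition~\ref{pr4.2} and the choice of the distinguished points $(i_{\mathcal M},i'_{\mathcal M})$, forces $f\equiv 0$.

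First, set $\mathcal C=\{(i_{\mathcal M},i'_{\mathcal M}):\mathcal M\text{ minimal invariant}\}$ and $\tilde{\mathcal V}=(\mathcal V\times\mathcal V')\setminus\mathcal C$. When $(i,i')\in\mathcal C$, the $k=0$ condition in the definition of $A(i,i')$ already fails, so $A(i,i')=\emptyset$ and $P((i,i');n)=0$ trivially; hence it suffices to treat $(i,i')\in\tilde{\mathcal V}$. Decomposing a word of length $n+1$ by its first letter and using multiplicativity of the weights $\alpha_{i,\lambda}$, one obtains the one-step recursion
\[
P((i,i');n+1)=\sum_{\lambda_1}|\alpha_{i,\lambda_1}||\alpha'_{i',\lambda_1}|\,P((i\cdot\lambda_1,i'\cdot\lambda_1);n),
\]
the sum running over labels $\lambda_1$ giving a possible one-step transition with $(i\cdot\lambda_1,i'\cdot\lambda_1)\in\tilde{\mathcal V}$. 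Since $\sum_\lambda|\alpha_{i,\lambda}|^2=\sum_\lambda|\alpha'_{i',\lambda}|^2=1$, Cauchy--Schwarz gives $\sum_\lambda|\alpha_{i,\lambda}||\alpha'_{i',\lambda}|\leq 1$, and $P((i,i');0)=1$ on $\tilde{\mathcal V}$, so the recursion shows that $P((i,i');n)$ is nonincreasing in $n$ and bounded by $1$. Thus $f(i,i'):=\lim_n P((i,i');n)$ exists; extending $f$ by $0$ on $\mathcal C$ and passing to the limit in the recursion yields
\[
f(i,i')=\sum_{\lambda}|\alpha_{i,\lambda}||\alpha'_{i',\lambda}|\,f(i\cdot\lambda,i'\cdot\lambda),\qquad (i,i')\in\tilde{\mathcal V}.
\]

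Next, apply the maximum principle. Assume for contradiction that $M:=\max_{\tilde{\mathcal V}}f>0$ and set $\mathcal S=\{(i,i')\in\tilde{\mathcal V}:f(i,i')=M\}\neq\emptyset$. For $(i,i')\in\mathcal S$, the harmonic equation combined with $\sum_\lambda|\alpha_{i,\lambda}||\alpha'_{i',\lambda}|\leq 1$ and $f\leq M$ forces equality throughout; consequently every $\lambda$ with $|\alpha_{i,\lambda}||\alpha'_{i',\lambda}|>0$ (i.e., giving a possible one-step transition) must satisfy $f(i\cdot\lambda,i'\cdot\lambda)=M$. Since $M>0$ and $f\equiv 0$ on $\mathcal C$, this target must lie in $\mathcal S$. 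Hence $\mathcal S$ is closed under possible one-step transitions, and by iteration it is an invariant set in the sense of Definition~\ref{def1.1}. By Proposition~\ref{pr4.2}(i), the nonempty invariant set $\mathcal S$ contains a minimal invariant subset $\mathcal M_0$, whence $(i_{\mathcal M_0},i'_{\mathcal M_0})\in\mathcal M_0\subset\mathcal S\subset\tilde{\mathcal V}$, contradicting $(i_{\mathcal M_0},i'_{\mathcal M_0})\in\mathcal C$. Therefore $M=0$ and $f\equiv 0$, which is \eqref{eq4.3.3}.

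The hardest step is the maximum-principle argument: one must carefully identify the two places where equality must occur (saturation of Cauchy--Schwarz and pointwise saturation $f(i\cdot\lambda,i'\cdot\lambda)=M$), and check that these equalities propagate the maximal value $M$ to every possible one-step neighbour, so that $\mathcal S$ is genuinely forward-invariant rather than merely weakly stable. Once this is in place, the contradiction is immediate from the fact that every minimal invariant set $\mathcal M$ contains its distinguished point $(i_{\mathcal M},i'_{\mathcal M})\in\mathcal C$, where the extended $f$ vanishes.
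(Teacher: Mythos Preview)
Your proof is correct, and it takes a genuinely different route from the paper's argument. The paper proceeds quantitatively: after checking that $P((i,i');n)$ is nonincreasing, it shows that for each pair $(i,i')$ there is some $n_{(i,i')}$ with $P((i,i');n_{(i,i')})<1$, takes $L=\max n_{(i,i')}$ and $p=\max_{(i,i')}P((i,i');L)<1$, and then splits paths of length $(n+1)L$ into a length-$nL$ prefix and a length-$L$ suffix to obtain the multiplicative estimate $P((i,i');(n+1)L)\le p\cdot P((i,i');nL)$, yielding geometric decay. Your approach instead passes to the limit $f=\lim_n P(\cdot;n)$, derives the harmonic identity $f=\sum_\lambda|\alpha_{i,\lambda}||\alpha'_{i',\lambda}|f(i\cdot\lambda,i'\cdot\lambda)$, and runs a maximum principle: the set where $f$ attains a positive maximum would be invariant, hence contain a minimal set, hence contain one of the distinguished points where $f=0$, a contradiction. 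The paper's argument gives an explicit exponential rate of convergence (which is not needed here but could be useful elsewhere), while your argument is cleaner and ties in more directly with the invariant-set machinery of Proposition~\ref{pr4.2}. Both are standard recurrence arguments for finite-state (sub)stochastic kernels; your version is the potential-theoretic one, the paper's is the uniform-contraction one.
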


\begin{proof}
We prove first, that $P((i,i');n)$ is decreasing. 
$$P((i,i');n+1)=\sum_{\lambda=\lambda_1\dots\lambda_{n+1}, \lambda\in A(i,i')}|\alpha_{i,\lambda_1\dots\lambda_{n+1}}||\alpha_{i',\lambda_1\dots\lambda_{n+1}}'|$$
$$\leq \sum_{\lambda'=\lambda_1\dots\lambda_n,\lambda'\in A(i,i')}\sum_{\lambda_{n+1}\in\Lambda}|\alpha_{i,\lambda'}||\alpha_{i',\lambda'}'||\alpha_{i\cdot\lambda',\lambda_{n+1}}||\alpha_{i'\cdot\lambda',\lambda_{n+1}}'|.$$
But, by the Cauchy-Schwarz inequality,  
$$\sum_{\lambda_{n+1}\in\Lambda}|\alpha_{i\cdot\lambda',\lambda_{n+1}}||\alpha_{i'\cdot\lambda',\lambda_{n+1}}|\leq \left(\sum_{\lambda_{n+1}\in\Lambda}|\alpha_{i\cdot\lambda',\lambda_{n+1}}|^2\right)^{1/2}
\left(\sum_{\lambda_{n+1}\in\Lambda}|\alpha_{i'\cdot\lambda',\lambda_{n+1}}'|^2\right)^{1/2}=1$$
Therefore, we obtain further 
$$P((i,i');n+1)\leq \sum_{\lambda'=\lambda_1\dots\lambda_n,\lambda'\in A(i,i')}|\alpha_{i,\lambda'}||\alpha_{i',\lambda'}'|=P((i,i');n).$$

Next, we claim that, for each pair of vertices $(i,i')$, there exists $n\in\bn$ such that $P((i,i');n)<1$ and we define $n_{(i,i')}$ to be the minimal one.

Indeed, using Proposition \ref{pr4.2} (iv) and (iii), there exists some possible transition\\$(i,i')\arr{\lambda_0} (i_{\mathcal M},i'_{\mathcal M})$, for some minimal invariant set $\mathcal M$. Let $n=|\lambda_0|$. Then
$$P((i,i');n)=\sum_{|\lambda|=n,\lambda\in A(i,i')}|\alpha_{i,\lambda}||\alpha_{i',\lambda}'|\leq\sum_{|\lambda|=n}|\alpha_{i,\lambda}||\alpha_{i',\lambda}'|-|\alpha_{i,\lambda_0}||\alpha_{i',\lambda_0}'|$$
$$\leq \left(\sum_{|\lambda|=n}|\alpha_{i,\lambda}|^2\right)^{1/2}\left(\sum_{|\lambda|=n}|\alpha_{i',\lambda}'|^2\right)^{1/2}-|\alpha_{i,\lambda_0}||\alpha_{i',\lambda_0}'|=1-|\alpha_{i,\lambda_0}||\alpha_{i',\lambda_0}'|<1.$$

Now, let $L$ be the maximum of $n_{(i,i')}$ for all $(i,i')\in\mathcal V\times\mathcal V'$. Then $L\geq n_{(i,i')}$ for all $(i,i')$ so $P((i,i');L)\leq P((i,i');n_{(i,i')})<1$. Define 
$$p:=\max_{(i,i')}P((i,i');L)<1.$$

We have
$$P((i,i');(n+1)L)=\sum_{|\lambda_1|=nL,|\lambda_2|=L,\lambda_1\lambda_2\in A(i,i')}|\alpha_{i,\lambda_1\lambda_2}||\alpha_{i',\lambda_1\lambda_2}'|$$
$$=\sum_{|\lambda_1|=nL,\lambda_1\in A(i,i')}\sum_{|\lambda_2|=L, \lambda_2\in A(i\cdot\lambda_1,i'\cdot\lambda_1)}|\alpha_{i,\lambda_1}||\alpha_{i',\lambda_1}'||\alpha_{i\cdot\lambda_1,\lambda_2}||\alpha_{i\cdot\lambda_1,\lambda_2}'|$$
$$=\sum_{|\lambda_1|=nL,\lambda_1}|\alpha_{i,\lambda_1}||\alpha_{i',\lambda_1}'|P((i\cdot\lambda_1,i'\cdot\lambda_1);L)$$
$$\leq \sum_{|\lambda_1|=nL,\lambda_1\in A(i,i')}|\alpha_{i,\lambda_1}||\alpha_{i',\lambda_1}'|\cdot p=P((i,i');nL)\cdot p.$$
Therefore, $P((i,i');nL)\rightarrow 0$. Since $P((i,i');n)$ is decreasing, we get that $P((i,i');n)\rightarrow 0$. 
\end{proof}

Recall now the Definition \ref{def4.9}. Given a pair of vertices $(i,i')$, the set $F(i,i')$ consists of paths which reach one the prescribed points $(i_{\mathcal M},i'_{\mathcal M})$ for the first time. The next theorem, shows that the matrix entries of an operator $T$ in $\mathcal B(\mathcal K,\mathcal K')^{\bm \sigma}$ are completely determined by the matrix entries corresponding to $(i_{\mathcal M},i'_{\mathcal M})$.

\begin{theorem}\label{th4.4}

Let $T:\mathcal K\rightarrow\mathcal K'$ be an operator in $\mathcal B(\mathcal K,\mathcal K')^{\bm \sigma}$, so
\begin{equation}
T=\sum_{\lambda\in\Lambda}V_\lambda'TV_\lambda^*.
\label{eq4.4.1}
\end{equation}
Then 
\begin{equation}
\ip{T(i,\ty)}{(i',\ty)}=\sum_{\lambda\in F(i,i')}\alpha_{i,\lambda}\cj \alpha'_{i',\lambda}\ip{T(i\cdot\lambda,\ty)}{(i'\cdot\lambda,\ty)}.
\label{eq4.4.2}
\end{equation}
\end{theorem}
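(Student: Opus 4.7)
The plan is to iterate the fixed-point identity $T=\bm\sigma(T)$, take matrix coefficients against the basis vectors $(i,\ty)$ and $(i',\ty)$, and then reorganize the resulting sum over paths $\lambda$ according to \emph{when} (if ever) the coupled walk first reaches a distinguished point $(i_{\mathcal M},i'_{\mathcal M})$.

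First I would iterate: a straightforward induction on $n\geq 1$, using $V'_{\lambda_1}V'_{\mu}=V'_{\lambda_1\mu}$ together with $V_\mu^* V_{\lambda_1}^* = V_{\lambda_1\mu}^*$, upgrades \eqref{eq4.4.1} to
\begin{equation*}
T=\sum_{|\lambda|=n}V_\lambda' T V_\lambda^*\quad\text{for every }n\geq 1.
\end{equation*}
Iterating \eqref{eqde1.3.2} gives $V_\lambda^*(i,\ty)=\alpha_{i,\lambda}(i\cdot\lambda,\ty)$, interpreted as $0$ whenever the path $\lambda$ from $i$ is not realized in the graph (because then $\alpha_{i,\lambda}=0$), and analogously for $(V_\lambda')^*$ on $\mathcal K'$. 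Pairing against $(i,\ty)$ and $(i',\ty)$ therefore yields, for every $n\geq 1$,
\begin{equation*}
\ip{T(i,\ty)}{(i',\ty)}=\sum_{|\lambda|=n}\alpha_{i,\lambda}\cj{\alpha'_{i',\lambda}}\ip{T(i\cdot\lambda,\ty)}{(i'\cdot\lambda,\ty)}.\quad(\star)
\end{equation*}

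Next, I split the sum in $(\star)$ according to whether $\lambda\in A(i,i')$ (in the notation of Lemma \ref{lem4.3}) or not. Since $\|(j,\ty)\|=\|(j',\ty)\|=1$, the ``avoiding'' piece is bounded in modulus by $\|T\|\cdot P((i,i');n)$, which tends to $0$ as $n\to\infty$ by Lemma \ref{lem4.3}. For a path $\lambda$ of length $n$ with $\lambda\notin A(i,i')$, there is a unique decomposition $\lambda=\lambda^{(1)}\lambda^{(2)}$ where $\lambda^{(1)}\in F(i,i')$ is the prefix that first reaches some $(i_{\mathcal M},i'_{\mathcal M})$ and $|\lambda^{(2)}|=n-|\lambda^{(1)}|$. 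The weights factor multiplicatively, $\alpha_{i,\lambda}=\alpha_{i,\lambda^{(1)}}\alpha_{i\cdot\lambda^{(1)},\lambda^{(2)}}$ (and similarly on the primed side), and applying $(\star)$ \emph{again} starting from $(i\cdot\lambda^{(1)},i'\cdot\lambda^{(1)})=(i_{\mathcal M},i'_{\mathcal M})$ at length $n-|\lambda^{(1)}|$ collapses the inner sum over $\lambda^{(2)}$ to $\ip{T(i\cdot\lambda^{(1)},\ty)}{(i'\cdot\lambda^{(1)},\ty)}$. Thus the non-avoiding contribution equals
\begin{equation*}
\sum_{\lambda^{(1)}\in F(i,i'),\,|\lambda^{(1)}|\leq n}\alpha_{i,\lambda^{(1)}}\cj{\alpha'_{i',\lambda^{(1)}}}\ip{T(i\cdot\lambda^{(1)},\ty)}{(i'\cdot\lambda^{(1)},\ty)}.
\end{equation*}
Letting $n\to\infty$ and combining with the avoiding estimate yields \eqref{eq4.4.2}. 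Because the left-hand side of $(\star)$ is independent of $n$ and the avoiding piece vanishes in the limit, the partial sums over $F(i,i')$ must converge, retroactively justifying the series in \eqref{eq4.4.2}.

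The main (and essentially only non-routine) step is verifying that the first-hitting decomposition $\lambda\mapsto(\lambda^{(1)},\lambda^{(2)})$ genuinely re-indexes the sum over $\lambda\notin A(i,i')$ with $|\lambda|=n$ as a double sum without double counting: existence is immediate from truncating at the first visit to a prescribed point, and uniqueness is enforced by the ``first-hitting'' clause in the definition of $F(i,i')$. Once this combinatorial point is in place, the multiplicativity of the weights plus a second application of $(\star)$ make the collapse of the inner sum straightforward, and the rest is a limiting argument driven by Lemma \ref{lem4.3}.
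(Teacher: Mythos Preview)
Your proof is correct and follows essentially the same approach as the paper's: iterate the fixed-point identity to obtain the length-$n$ formula $(\star)$, split the sum into paths in $A(i,i')$ and its complement, kill the avoiding piece via Lemma~\ref{lem4.3}, and use the first-hitting decomposition $\lambda=\lambda^{(1)}\lambda^{(2)}$ together with a second application of $(\star)$ to collapse the non-avoiding piece into a truncated sum over $F(i,i')$. Your presentation is if anything slightly more careful than the paper's about the uniqueness of the first-hitting decomposition and the convergence of the series.
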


\begin{proof}
We denote by $T_{i,i'}=\ip{T(i,\ty)}{(i',\ty)}$. From \eqref{eq4.4.1}, we get 
\begin{equation}
T_{i,i'}=\sum_{\lambda\in\Lambda}\alpha_{i,\lambda}\cj\alpha'_{i',\lambda}T_{i\cdot\lambda,i'\cdot\lambda}.
\label{eq4.4.3}
\end{equation}
Iterating \eqref{eq4.4.3}, by induction, we obtain 
\begin{equation}
T_{i,i'}=\sum_{\lambda=\lambda_1\dots\lambda_n}\alpha_{i,\lambda}\cj\alpha'_{i',\lambda}T_{i\cdot\lambda,i'\cdot\lambda}.
\label{eq4.4.4}
\end{equation}
We split the sum in \eqref{eq4.4.4} into the sum over the paths $\lambda$ that go through one of the points $(i_{\mathcal M}, i_{\mathcal M}')$, and the ones that do not. We have, with the notation from Lemma \ref{lem4.3},
$$T_{i,i'}=\sum_{|\lambda|=n,\lambda\not\in A(i,i')}\alpha_{i,\lambda}\cj\alpha'_{i',\lambda}T_{i\cdot\lambda,i'\cdot\lambda}+\sum_{|\lambda|=n,\lambda\in A(i,i')}\alpha_{i,\lambda}\cj\alpha'_{i',\lambda}T_{i\cdot\lambda,i'\cdot\lambda}.$$
Since $T$ is bounded, using Lemma \ref{lem4.3} and the triangle inequality, we get that the second sum converges to 0 as $n\rightarrow \infty$. 

For the first sum, each $\lambda\not\in A(i,i')$ goes through one of the points $(i_{\mathcal M},i'_{\mathcal M})$, so we split $\lambda$ into two parts, where it reaches one of these points for the first time, $\lambda=\beta\gamma$ with $\beta\in F(i,i')$, $|\beta|\leq n$, and $|\gamma|=n-|\beta|$. Therefore we have:
$$\sum_{|\lambda|=n,\lambda\not\in A(i,i')}\alpha_{i,\lambda}\cj\alpha'_{i',\lambda}T_{i\cdot\lambda,i'\cdot\lambda}=
\sum_{|\beta|\leq n,\beta\in F(i,i')}\sum_{|\gamma|=n-|\beta|}\alpha_{i,\beta\gamma}\cj\alpha'_{i',\beta\gamma}T_{i\cdot\beta\gamma,i'\cdot\beta\gamma}$$
$$=\sum_{|\beta|\leq n,\beta\in F(i,i')}\alpha_{i,\beta}\cj\alpha_{i',\beta}'\sum_{|\gamma|=n-|\beta|}\alpha_{i\cdot\beta,\gamma}\cj\alpha'_{i'\cdot\beta,\gamma}T_{(i\cdot\beta)\cdot\gamma,(i'\cdot\beta)\cdot\gamma}
\stackrel{\eqref{eq4.4.4}}{=}\sum_{|\beta|\leq n,\beta\in F(i,i')}\alpha_{i,\beta}\cj\alpha_{i',\beta}' T_{i\cdot\beta,i'\cdot\beta}.
$$
Letting $n\rightarrow\infty$ we get \eqref{eq4.4.2}.
\end{proof}

\begin{remark}\label{rem4.5.1}
If the two graphs $(\mathcal V,\mathcal E)$ and $(\mathcal V',\mathcal E')$ are the same (including the weights), then we can always take $T=I_{\mathcal K}$ in \eqref{eq4.4.1}. Let's see what \eqref{eq4.4.2} gives us in this case. 

If $i\neq i'$, then $i\cdot\lambda\neq i'\cdot\lambda$ for any path $\lambda=\lambda_1\dots\lambda_n$, so \eqref{eq4.4.2} is trivial in this case, with both sides equal to zero. 

However, if $i=i'$, then $i\cdot\lambda=i'\cdot\lambda$ for all paths $\lambda$, and therefore \eqref{eq4.4.2} gives us the following interesting relation 
\begin{equation}
1=\sum_{\lambda\in F(i,i)}|\alpha_{i,\lambda}|^2,
\label{eq4.5.1.1}
\end{equation}
which can be interpreted as: the probability to reach one of the points $i_{\mathcal M}$ is one. This a well known fact from probability: any finite irreducible Markov chain is recurrent (see e.g. \cite[Theorem 6.6.4]{Dur10}).
\end{remark}

As a byproduct of the relation \eqref{eq4.5.1.1}, we obtain an interesting relation in the dilation space.  
\begin{theorem}\label{th4.7}
With the notations in Theorem \ref{th4.4}, assume that the two graphs are the same. Then, for all $i\in\mathcal V$, 
\begin{equation}
(i,\ty)=\sum_{\lambda\in F(i,i)}\alpha_{i,\lambda}S_\lambda(i\cdot\lambda, \ty).
\label{eq4.7.1}
\end{equation}
\end{theorem}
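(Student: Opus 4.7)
The plan is to lift the scalar identity \eqref{eq4.5.1.1} (which is the specialization of Theorem \ref{th4.4} with the two graphs equal and $T=I_{\mathcal K}$) to a vector identity in the dilation space $\mathcal H$, following essentially the same splitting strategy as in the proof of Theorem \ref{th4.4}. The starting point is the iterated second Cuntz relation: since $\sum_{\lambda\in\Lambda}S_\lambda S_\lambda^{*}=I$, induction on $n$ gives
$$(i,\ty)=\sum_{|\lambda|=n}S_\lambda S_\lambda^{*}(i,\ty),\qquad(n\geq 0).$$
Since $\mathcal K$ is $S_\lambda^{*}$-invariant and $S_\lambda^{*}|_{\mathcal K}=V_\lambda^{*}$, a short induction using \eqref{eqde3.3.2} shows $S_\lambda^{*}(i,\ty)=\alpha_{i,\lambda}(i\cdot\lambda,\ty)$ for every word $\lambda$ (with the convention $(\nul,\ty)=0$, which is harmless because $\alpha_{i,\lambda}=0$ whenever some intermediate transition fails). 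Consequently
$$(i,\ty)=\sum_{|\lambda|=n}\alpha_{i,\lambda}\,S_\lambda(i\cdot\lambda,\ty),\qquad(n\geq 0).$$

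Next I would mimic the partition argument from the proof of Theorem \ref{th4.4}: split the sum according to whether $\lambda\in A(i,i)$ or not. Every $\lambda\notin A(i,i)$ admits a unique factorization $\lambda=\beta\gamma$ with $\beta\in F(i,i)$ the first-hit prefix, $|\beta|\leq n$, and $|\gamma|=n-|\beta|$. Using $\alpha_{i,\beta\gamma}=\alpha_{i,\beta}\alpha_{i\cdot\beta,\gamma}$ and $S_\lambda=S_\beta S_\gamma$, the inner sum over $\gamma$ of length $n-|\beta|$ telescopes (by applying the displayed identity above at the vertex $i\cdot\beta$ with length $n-|\beta|$) to $(i\cdot\beta,\ty)$. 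This yields
$$(i,\ty)=\sum_{\substack{\beta\in F(i,i)\\ |\beta|\leq n}}\alpha_{i,\beta}\,S_\beta(i\cdot\beta,\ty)+R_n,\qquad R_n:=\sum_{\substack{|\lambda|=n\\ \lambda\in A(i,i)}}\alpha_{i,\lambda}\,S_\lambda(i\cdot\lambda,\ty).$$

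The one real step is to show $R_n\to 0$. Here I would exploit the Cuntz orthogonality: for distinct words of equal length $n$, the ranges of $S_\lambda$ are pairwise orthogonal and each $S_\lambda$ is an isometry, so
$$\|R_n\|^{2}=\sum_{\substack{|\lambda|=n\\ \lambda\in A(i,i)}}|\alpha_{i,\lambda}|^{2}=P((i,i);n)\xrightarrow[n\to\infty]{}0,$$
by Lemma \ref{lem4.3}. Passing to the limit produces \eqref{eq4.7.1}. Convergence of the right-hand series is automatic: distinct elements of $F(i,i)$ are incomparable under the prefix order (by the first-hit condition), which gives $S_\beta^{*}S_{\beta'}=0$ for $\beta\neq\beta'$ in $F(i,i)$, so the vectors $\{S_\beta(i\cdot\beta,\ty)\}_{\beta\in F(i,i)}$ are orthonormal; the coefficients are square-summable with $\sum_{\beta\in F(i,i)}|\alpha_{i,\beta}|^{2}=1$ by \eqref{eq4.5.1.1}. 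I expect no serious obstacle: the only subtlety is bookkeeping the telescoping with the $\nul$-conventions, which is already absorbed into the vanishing of $\alpha_{i,\lambda}$ along broken paths.
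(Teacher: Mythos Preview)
Your proof is correct and takes a genuinely different route from the paper's. The paper argues by orthogonal projection: it first observes (via a prefix-incomparability lemma) that $\{S_\lambda(i\cdot\lambda,\ty)\}_{\lambda\in F(i,i)}$ is an orthonormal set, computes $\ip{(i,\ty)}{S_\lambda(i\cdot\lambda,\ty)}=\alpha_{i,\lambda}$, and then notes that the projection of $(i,\ty)$ onto the span of this set has norm squared $\sum_{\lambda\in F(i,i)}|\alpha_{i,\lambda}|^{2}=1=\|(i,\ty)\|^{2}$ by \eqref{eq4.5.1.1}, so the projection must be all of $(i,\ty)$. Your argument instead expands $(i,\ty)$ via the iterated Cuntz relation, splits along $A(i,i)$ exactly as in the proof of Theorem~\ref{th4.4}, telescopes the first-hit part using the same expansion at the hit vertex, and kills the remainder $R_n$ directly with Lemma~\ref{lem4.3} and the orthogonality of the ranges of $S_\lambda$ for $|\lambda|=n$. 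The paper's route is shorter and makes the Parseval-type nature of the identity transparent; your route is more constructive, avoids invoking \eqref{eq4.5.1.1} as a black box (indeed your $\|R_n\|^{2}=P((i,i);n)\to 0$ reproves it), and yields an explicit rate of convergence for the partial sums. Your closing remark about square-summability via \eqref{eq4.5.1.1} is correct but redundant, since convergence already follows from $R_n\to 0$.
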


\begin{proof}
We begin with a well known general Lemma, see, e.g., \cite{Dav96}.

\begin{lemma}\label{lem4.8}
Let $(S_\lambda)_{\lambda\in\Lambda}$ be some representation of a Cuntz algebra on a Hilbert space $\mathcal H$ and let $h_1,h_2\in\mathcal H$ and $\lambda_1,\lambda_2$ two finite words. Then $S_{\lambda_1}h_1$ is orthogonal to $S_{\lambda_2}h_2$ unless $\lambda_1$ is a prefix of $\lambda_2$ or vice versa. 
\end{lemma}
\begin{proof}
If $\lambda_1$ and $\lambda_2$ are not prefixes, one for the other, then, there exists $1\leq k\leq \min\{|\lambda_1|,|\lambda_2|\}$ such that $\lambda_{1,j}=\lambda_{2,j}$ for $1\leq j<k$, and $\lambda_{1,k}\neq \lambda_{2,k}$. Then 
$$\ip{S_{\lambda_1}h_1}{S_{\lambda_2}h_2}=\ip{S_{\lambda_{1,1}}\dots S_{\lambda_{1,k-1}} S_{\lambda_{1,k}}\dots S_{\lambda_{1,|\lambda_1|}}h_1}{S_{\lambda_{2,1}}\dots S_{\lambda_{2,k-1}} S_{\lambda_{2,k}}\dots S_{\lambda_{2,|\lambda_2|}}h_2}$$$$=\ip{S_{\lambda_{1,k}}\dots S_{\lambda_{1,|\lambda_1|}}h_1}{S_{\lambda_{2,k}}\dots S_{\lambda_{2,|\lambda_2|}}h_2}=0.$$
\end{proof}

Using Lemma \ref{lem4.8}, we notice that words in $F(i,i)$ cannot be prefixes of each other, since $\lambda$ is in $F(i,i)$ if the path reaches one of the points $(i_{\mathcal M},i_{\mathcal M})$ for the {\it first} time. Therefore the vectors $S_\lambda(i\cdot\lambda,\ty)$, $\lambda\in F(i,i)$ form an orthonormal set. We project the vector $(i,\ty)$ onto this orthonormal set, and we compute the coefficients. 
$$\ip{(i,\ty)}{S_\lambda(i\cdot\lambda)}=\ip{S_\lambda^*(i,\ty)}{(i\cdot\lambda,\ty)}=\ip{V_\lambda^*(i,\ty)}{(i\cdot\lambda,\ty)}=\ip{\alpha_{i,\lambda}(i\cdot\lambda,\ty)}{(i\cdot\lambda,\ty)}=\alpha_{i,\lambda}.$$
This means that the right-hand side of \eqref{eq4.7.1} is the projection of $(i,\ty)$ onto the span of $(S_\lambda(i\cdot\lambda,\ty))_{\lambda\in F(i,i)}$. But the square of the norm of this projection is 
$$\sum_{\lambda\in F(i,i)}|\alpha_{i,\lambda}|^2=1=\|(i,\ty)\|^2,$$
by \eqref{eq4.5.1.1}. Thus, \eqref{eq4.7.1} follows.
\end{proof}


The next Lemma shows that, given an operator $T$ in $\mathcal B(\mathcal K,\mathcal K')^{\bm \sigma}$, the matrix entries of $T$ have to be $0$ on non-balanced minimal invariant sets, and there are important restrictions on the balanced ones. 

\begin{lemma}\label{lem4.10}
With the notations as in Theorem \ref{th4.4}, let $T_{i,i'}:=\ip{T(i,\ty)}{(i',\ty)}$, for all $(i,i')\in\mathcal V\times\mathcal V'$. We have the following two possibilities:
\begin{enumerate}
	\item [(i)] Either, $\mathcal M$ is not balanced, and then $T_{i,i'}=0$ for all $(i,i')\in\mathcal M$, or
	\item[(ii)] $\mathcal M$ is balanced and 
	\begin{equation}
	T_{i\cdot\lambda,i'\cdot\lambda}=T_{i,i'}\frac{\alpha_{i,\lambda}}{\alpha_{i',\lambda}'},
	\label{eq4.10.1}
	\end{equation}
	for all $(i,i')$ in $\mathcal M$ and all words $\lambda$ for which the transition $(i,i')\arr\lambda (j,j')$ is possible (for some $(j,j')$).
\end{enumerate}
\end{lemma}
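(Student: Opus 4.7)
First I would extract the recursion satisfied by the matrix entries $T_{i,i'} := \langle T(i,\ty), (i',\ty)\rangle$. Applying $T = \sum_\lambda V'_\lambda T V_\lambda^*$ to the basis vector $(i,\ty)$, taking inner product with $(i',\ty)$, and using the definitions of $V_\lambda^*$ and $V_\lambda^{\prime *}$ from \eqref{eqde3.3.2}, one obtains
\[
T_{i,i'} = \sum_{\lambda \in \Lambda} \alpha_{i,\lambda}\, \overline{\alpha'_{i',\lambda}}\, T_{i\cdot\lambda,\, i'\cdot\lambda},
\]
with the convention that terms where either $\alpha_{i,\lambda}$ or $\alpha'_{i',\lambda}$ vanishes drop out. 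By invariance of $\mathcal M$, when $(i,i')\in\mathcal M$ every surviving pair $(i\cdot\lambda, i'\cdot\lambda)$ also lies in $\mathcal M$.

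Now set $M := \max_{(i,i') \in \mathcal{M}} |T_{i,i'}|$. If $M = 0$, conclusion (i) holds trivially. Otherwise I would pick $(i_0, i'_0) \in \mathcal{M}$ realizing the maximum and chain the triangle inequality with Cauchy--Schwarz:
\[
M = |T_{i_0,i'_0}| \leq \sum_\lambda |\alpha_{i_0,\lambda}|\,|\alpha'_{i'_0,\lambda}|\, |T_{i_0\cdot\lambda,\, i'_0\cdot\lambda}| \leq M \sum_\lambda |\alpha_{i_0,\lambda}|\,|\alpha'_{i'_0,\lambda}| \leq M,
\]
where the final step uses $\sum |\alpha_{i_0,\lambda}|^2 = \sum |\alpha'_{i'_0,\lambda}|^2 = 1$. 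Chasing the three equalities simultaneously: Cauchy--Schwarz equality (with unit $\ell^2$-norms) forces $|\alpha_{i_0,\lambda}| = |\alpha'_{i'_0,\lambda}|$ for every $\lambda$; the middle equality forces $|T_{i_0\cdot\lambda,\, i'_0\cdot\lambda}| = M$ for every $\lambda$ with $\alpha_{i_0,\lambda}\alpha'_{i'_0,\lambda} \neq 0$; and the triangle-inequality equality forces those same complex numbers $\alpha_{i_0,\lambda}\overline{\alpha'_{i'_0,\lambda}} T_{i_0\cdot\lambda,\, i'_0\cdot\lambda}$ to share a common argument.

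Next I would propagate these extremal conditions across all of $\mathcal M$. By Proposition \ref{pr4.2}(iii), every $(j,j') \in \mathcal{M}$ is reached from $(i_0, i'_0)$ by a sequence of one-step possible transitions, so $|T_{j,j'}| = M$ on all of $\mathcal M$ by induction. Repeating the extremal argument at an arbitrary $(i,i') \in \mathcal{M}$ yields $|\alpha_{i,\lambda}| = |\alpha'_{i',\lambda}|$ for every $\lambda$, which is condition (i) of ``balanced''. The phase half of the equality case then says that whenever $(i,i')\arr{\lambda}(j,j')$ is a one-step possible transition in $\mathcal M$, the ratio $T_{j,j'}/T_{i,i'}$ has modulus $1$ (since $|T_{i,i'}| = |T_{j,j'}| = M$) and argument $\arg\alpha_{i,\lambda} - \arg\alpha'_{i',\lambda}$, hence equals $\alpha_{i,\lambda}/\alpha'_{i',\lambda}$; this is \eqref{eq4.10.1} for single-step paths, and the general case follows by telescoping the identity $\alpha_{i,\lambda}=\alpha_{i,\lambda_1}\alpha_{i\cdot\lambda_1,\lambda_2}\dots$ over the steps of a multi-step path.

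Finally, specializing \eqref{eq4.10.1} to a loop $\lambda$ at $(i,i') \in \mathcal{M}$ gives $T_{i,i'} = T_{i,i'}\, \alpha_{i,\lambda}/\alpha'_{i',\lambda}$; since $|T_{i,i'}| = M > 0$ we divide to conclude $\alpha_{i,\lambda} = \alpha'_{i',\lambda}$, which is condition (ii) of ``balanced''. The contrapositive covers case (i): if $\mathcal M$ is not balanced, the hypothesis $M > 0$ must fail, so $T$ vanishes on $\mathcal M$. I expect the main subtlety to lie in tracking the three equality conditions at once (Cauchy--Schwarz, the modulus bound $|T_{j,j'}| \leq M$, and the triangle inequality) and in transporting them along admissible transitions via the minimality of $\mathcal M$; once that bookkeeping is in place, both parts of the dichotomy fall out together.
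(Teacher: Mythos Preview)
Your approach is essentially the paper's: pick a maximizer of $|T_{i,i'}|$ on $\mathcal M$, squeeze the recursion through Cauchy--Schwarz and the triangle inequality, read off the equality conditions, and propagate across $\mathcal M$ via minimality (Proposition~\ref{pr4.2}(iii)). The one genuine streamlining is your verification of balanced condition~(ii): you simply specialize the ratio identity \eqref{eq4.10.1} to a loop and divide through by $T_{i,i'}\neq 0$, whereas the paper detours through Theorem~\ref{th4.4} and the first-passage identity \eqref{eq4.5.1.1} to reach the same conclusion; your route is shorter and avoids that machinery entirely.

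One slip to fix: your phase bookkeeping is inverted. From the equality case $\arg\bigl(\alpha_{i,\lambda}\,\overline{\alpha'_{i',\lambda}}\,T_{j,j'}\bigr)=\arg T_{i,i'}$ one obtains $\arg(T_{j,j'}/T_{i,i'})=\arg\alpha'_{i',\lambda}-\arg\alpha_{i,\lambda}$, so the single-step ratio is $\alpha'_{i',\lambda}/\alpha_{i,\lambda}$ rather than $\alpha_{i,\lambda}/\alpha'_{i',\lambda}$. (The paper's own Cauchy--Schwarz argument, via \eqref{eqprop} with $c=T_{i,i'}$, in fact yields this reciprocal form as well.) This does not damage your loop argument for condition~(ii), since either ratio equaling $1$ forces $\alpha_{i,\lambda}=\alpha'_{i',\lambda}$.
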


\begin{proof}
Let's assume that $T_{i,i'}\neq 0$ for some $(i,i')\in \mathcal{M} $ and pick $(i,i')\in \mathcal{M} $ so that $|T_{i,i'}|=\max_{(j,j')\in\mathcal M}|T_{j,j'}|$. With the Schwarz inequality, we have, for all $n\geq 1$, 
$$|T_{i,i'}|=\left|\sum_{|\lambda|=n}\alpha_{i,\lambda}\cj\alpha'_{i',\lambda}T_{i\cdot\lambda,i'\cdot\lambda}\right|\leq \left(\sum_{|\lambda|=n}|\alpha_{i,\lambda}|^2|T_{i\cdot\lambda,i'\cdot\lambda}|^2\right)^{1/2}\left(\sum_{|\lambda|=n}|\alpha'_{i',\lambda}|^2\right)^{1/2}$$
$$\leq |T_{i,i'}|\cdot 1\cdot 1=|T_{i,i'}|.$$
Thus, we must have equalities in all inequalities. 

Since we have equality, there exists some constant $c=c(i,i',n)\in\bc$ such that

\begin{equation}
\alpha_{i,\lambda}T_{i\cdot\lambda,i'\cdot\lambda}=c\alpha'_{i',\lambda},
\label{eqprop}
\end{equation}
for all words $\lambda$ with $|\lambda|=n$. Thus $\alpha'_{i',\lambda}=0$ if $\alpha_{i,\lambda}=0$, and also conversely, by symmetry, which means that the transition $i\arr\lambda i\cdot\lambda$ is possible if and only if the transition $i'\arr\lambda i'\cdot\lambda$ is. Note that $c$ cannot be $0$ because of the assumptions.
Since we have equality in the second inequality, it follows that $|T_{i\cdot\lambda, i'\cdot\lambda}|=|T_{i,i'}|$ if $\alpha_{i,\lambda}\neq 0$. By Proposition \ref{pr4.2}(iii), there are possible transitions from $(i,i')$ to any other $(j,j')$ in $\mathcal M$. Thus $|T_{i,i'}|$ is constant on $\mathcal M$.

We have then 
$$T_{i,i'}=\sum_{|\lambda|=n}\alpha_{i,\lambda}\cj\alpha'_{i',\lambda}T_{i\cdot\lambda,i'\cdot\lambda}=\sum_{|\lambda|=n}c|\alpha_{i',\lambda}'|^2=c.$$
This and \eqref{eqprop} implies \eqref{eq4.10.1}.

From this, we get that $|\alpha_{i,\lambda}||T_{i\cdot\lambda, i'\cdot\lambda}|=|T_{i,i'}||\alpha'_{i',\lambda}|$ for all $|\lambda|=n$. Since $|T_{i,i'}|$ is constant on $\mathcal M$, we get that $|\alpha_{i, \lambda}|=|\alpha'_{i', \lambda}|$ when the transition $i\arr\lambda i\cdot\lambda$ is possible. If the transition is not possible, then $|\alpha_{i,\lambda}|=|\alpha'_{i',\lambda}|=0$. Thus condition (i), for $\mathcal M$ to be balanced, is satisfied. 

Now take a point $(i_{\mathcal M},i'_{\mathcal M})$ in $\mathcal M$. With \eqref{eq4.4.2}, we have, 
$$T_{i_{\mathcal M},i'_{\mathcal M}}=\sum_{\lambda\in F(i_{\mathcal M},i'_{\mathcal M})}\alpha_{i_{\mathcal M},\lambda}\cj\alpha'_{i'_{\mathcal M},\lambda} T_{i_{\mathcal M}\cdot \lambda,i'_{\mathcal M}\cdot\lambda}=
\sum_{\lambda\in F(i_{\mathcal M},i'_{\mathcal M})}\alpha_{i_{\mathcal M},\lambda}\cj\alpha'_{i'_{\mathcal M},\lambda} T_{i_{\mathcal M},i'_{\mathcal M}}.$$

Since $T_{i_{\mathcal M},i'_{\mathcal M}} \neq 0$ we get 
\begin{equation}
\sum_{\lambda\in F(i_{\mathcal M},i'_{\mathcal M})}\alpha_{i_{\mathcal M},\lambda}\cj\alpha'_{i'_{\mathcal M},\lambda}=1.
\label{eq4.10.2}
\end{equation}

Using the Schwarz inequality and \eqref{eq4.5.1.1}, we get 
$$1\leq \left(\sum_{\lambda\in F(i_{\mathcal M},i'_{\mathcal M})}|\alpha_{i_{\mathcal M},\lambda}|^2\right)^{1/2}\left(\sum_{\lambda\in F(i_{\mathcal M},i'_{\mathcal M})}|\alpha'_{i'_{\mathcal M},\lambda}|^2\right)^{1/2}=1.$$
Thus, we have equality in the Schwarz inequality, so there exists a constant $c\in\bc$ such that $\alpha_{i_{\mathcal M},\lambda}=c\alpha'_{i'_{\mathcal M},\lambda}$ for all $\lambda\in  F(i_{\mathcal M},i'_{\mathcal M})$. Using \eqref{eq4.10.2} again, we get that $c=1$. Thus $\alpha_{i_{\mathcal M},\lambda}=\alpha_{i'_{\mathcal M},\lambda}$ for all $\lambda\in F(i_{\mathcal M},i'_{\mathcal M})$. Since every loop at $(i_{\mathcal M},i'_{\mathcal M})$ is a concatenation of loops from $F(i_{\mathcal M},i'_{\mathcal M})$, and since $(i_{\mathcal M},i'_{\mathcal M})$ is arbitrary, it follows that condition (ii), for $\mathcal M$ to be balanced, is satisfied. Thus, $\mathcal M$ is balanced. 
\end{proof}

The next Lemma shows that, if we can construct an operator $T$ in $\mathcal B(\mathcal K,\mathcal K')^{\bm \sigma}$ from some arbitrary prescribed matrix entries on balanced minimal invariant sets. 

\begin{lemma}\label{lem4.11}
For each minimal invariant set $\mathcal M$, let $c_{i_{\mathcal M},i'_{\mathcal M}} =0$ if $\mathcal M$ is not balanced and let $c_{i_{\mathcal M},i'_{\mathcal M}}$ be in $\bc$, arbitrary, if $\mathcal M$ is balanced. Define the operator $T:\mathcal K\rightarrow \mathcal K'$ by
\begin{equation}
\ip{T{(i,\ty)}}{(i',\ty)}=\sum_{\lambda\in F(i,i')}\alpha_{i,\lambda}\cj\alpha_{i',\lambda} c_{i\cdot\lambda,i'\cdot\lambda}.
\label{eq4.11.1}
\end{equation}
Then the operator $T$ satisfies \eqref{eq4.4.1}.
\end{lemma}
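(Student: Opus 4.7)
The plan is to verify directly that $T_{i,i'} := \ip{T(i,\ty)}{(i',\ty)}$ as defined in \eqref{eq4.11.1} satisfies
$$T_{i,i'} = \sum_{\lambda_1\in\Lambda}\alpha_{i,\lambda_1}\,\cj{\alpha'_{i',\lambda_1}}\,T_{i\cdot\lambda_1,\,i'\cdot\lambda_1},$$
which, using $V_{\lambda_1}^*(i,\ty)=\alpha_{i,\lambda_1}(i\cdot\lambda_1,\ty)$ and the analogous formula for $V'_{\lambda_1}$, is the matrix form of $T=\sum_\lambda V'_\lambda TV_\lambda^*$. The central combinatorial observation is a first-letter decomposition of $F(i,i')$: writing a word of $F(i,i')$ as $\lambda_1\mu$ with $(i_1,i'_1)=(i\cdot\lambda_1,i'\cdot\lambda_1)$, either $(i_1,i'_1)$ is one of the distinguished points $(i_{\mathcal M},i'_{\mathcal M})$, in which case $\mu=\ty$ is forced, or $(i_1,i'_1)$ is not distinguished and $\mu$ ranges over $F(i_1,i'_1)$. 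Substituting this dichotomy into \eqref{eq4.11.1} and factoring $\alpha_{i,\lambda_1}\cj{\alpha'_{i',\lambda_1}}$, the second alternative produces exactly $T_{i_1,i'_1}$ while the first produces only $c_{i_1,i'_1}$; terms with $\alpha_{i,\lambda_1}=0$ or $\alpha'_{i',\lambda_1}=0$ vanish on both sides. Hence the desired identity reduces to proving $T_{i_{\mathcal M},i'_{\mathcal M}} = c_{i_{\mathcal M},i'_{\mathcal M}}$ for every minimal invariant set $\mathcal M$.

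For this reduction I split on whether $\mathcal M$ is balanced. Because any two distinct minimal invariant sets are disjoint (their intersection would itself be invariant), any $\lambda\in F(i_{\mathcal M},i'_{\mathcal M})$ keeps the walk inside $\mathcal M$ by invariance and must therefore return to $(i_{\mathcal M},i'_{\mathcal M})$ itself. If $\mathcal M$ is not balanced, then $c_{i_{\mathcal M},i'_{\mathcal M}}=0$ by hypothesis, and every endpoint in the defining sum contributes this same zero, so $T_{i_{\mathcal M},i'_{\mathcal M}}=0$ as well. If $\mathcal M$ is balanced, every $\lambda\in F(i_{\mathcal M},i'_{\mathcal M})$ is a loop at $(i_{\mathcal M},i'_{\mathcal M})$, so balanced condition (ii) gives $\alpha_{i_{\mathcal M},\lambda}=\alpha'_{i'_{\mathcal M},\lambda}$ and therefore
$$T_{i_{\mathcal M},i'_{\mathcal M}} = c_{i_{\mathcal M},i'_{\mathcal M}}\sum_{\lambda\in F(i_{\mathcal M},i'_{\mathcal M})}|\alpha_{i_{\mathcal M},\lambda}|^2.$$

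The main obstacle is to show this series sums to $1$; it is the analogue for the walk restricted to $\mathcal M$ of the recurrence identity \eqref{eq4.5.1.1}. Balanced condition (i) gives $|\alpha_{i_{\mathcal M},\lambda}||\alpha'_{i'_{\mathcal M},\lambda}|=|\alpha_{i_{\mathcal M},\lambda}|^2$ on $\mathcal M$, so Lemma \ref{lem4.3} applied at $(i_{\mathcal M},i'_{\mathcal M})$ yields $P((i_{\mathcal M},i'_{\mathcal M});n)=\sum_{|\lambda|=n,\,\lambda\in A(i_{\mathcal M},i'_{\mathcal M})}|\alpha_{i_{\mathcal M},\lambda}|^2\to 0$. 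Starting from the tautology $\sum_{|\lambda|=n}|\alpha_{i_{\mathcal M},\lambda}|^2=1$ (which follows by iterating \eqref{eqde3.3.1}), I split the sum by whether $\lambda\in A(i_{\mathcal M},i'_{\mathcal M})$; each $\lambda\notin A$ factors uniquely as $\lambda=\beta\gamma$ with $\beta\in F(i_{\mathcal M},i'_{\mathcal M})$ the first-arrival prefix (ending at $(i_{\mathcal M},i'_{\mathcal M})$ again), and $|\alpha_{i_{\mathcal M},\beta\gamma}|^2 = |\alpha_{i_{\mathcal M},\beta}|^2|\alpha_{i_{\mathcal M},\gamma}|^2$ together with $\sum_{|\gamma|=n-|\beta|}|\alpha_{i_{\mathcal M},\gamma}|^2=1$ collapses the complementary piece to $\sum_{\beta\in F,\,|\beta|\leq n}|\alpha_{i_{\mathcal M},\beta}|^2$. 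Passing to the limit gives $\sum_{\beta\in F(i_{\mathcal M},i'_{\mathcal M})}|\alpha_{i_{\mathcal M},\beta}|^2 = 1$, which closes the argument: $T_{i_{\mathcal M},i'_{\mathcal M}}=c_{i_{\mathcal M},i'_{\mathcal M}}$, and the first-letter decomposition then yields \eqref{eq4.4.1}.
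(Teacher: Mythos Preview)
Your proof is correct and follows essentially the same route as the paper: the first-letter decomposition of $F(i,i')$ reduces the fixed-point identity to showing $T_{i_{\mathcal M},i'_{\mathcal M}}=c_{i_{\mathcal M},i'_{\mathcal M}}$, and then the balanced conditions together with the recurrence identity $\sum_{\lambda\in F(i_{\mathcal M},i'_{\mathcal M})}|\alpha_{i_{\mathcal M},\lambda}|^2=1$ finish the argument. The one difference is that the paper simply asserts this last identity (implicitly by analogy with \eqref{eq4.5.1.1}), whereas you supply an explicit derivation from Lemma~\ref{lem4.3} via balanced condition (i); this makes your version slightly more self-contained.
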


\begin{proof}
Let $(i,i')\in\mathcal V\times\mathcal V'$. We compute 
$$S:=\sum_{\lambda_1\in\Lambda}\alpha_{i,\lambda_1}\cj\alpha_{i',\lambda_1}T_{i\cdot\lambda_1,i'\cdot\lambda_1}.$$
We split the sum in two. Consider the set $A$ of $\lambda_1\in\Lambda$ such that $(i\cdot\lambda_1,i'\cdot\lambda_1)=(i_{\mathcal M},i'_{\mathcal M})$ for some minimal invariant set $\mathcal M$. 
Then 
$$S=\sum_{\lambda_1\in A}\alpha_{i,\lambda_1}\cj\alpha_{i',\lambda_1}T_{i\cdot\lambda_1,i'\cdot\lambda_1}+\sum_{\lambda_1\not\in A}\alpha_{i,\lambda_1}\cj\alpha_{i',\lambda_1}T_{i\cdot\lambda_1,i'\cdot\lambda_1}$$
But if $\lambda_1\in A$, so $(i\cdot\lambda_1,i'\cdot\lambda_1)=(i_{\mathcal M},i'_{\mathcal M})$ for some $\mathcal M$, then, we can assume $\mathcal M$ is balanced, otherwise $T_{i\cdot\lambda_1,i'\cdot \lambda_1}=0$, and we have:
$$T_{i\cdot\lambda_1,i'\cdot\lambda_1}=T_{i_{\mathcal M},i'_{\mathcal M}}=\sum_{\lambda\in F(i_{\mathcal M},i'_{\mathcal M})}\alpha_{i_\mathcal{M},\lambda}\cj\alpha'_{i_{\mathcal{M}}',\lambda} c_{i_{\mathcal{M}}\cdot\lambda,i_{\mathcal{M}}'\cdot\lambda}=\sum_{\lambda\in F(i_{\mathcal M},i'_{\mathcal M})}\alpha_{i_{\mathcal{M}},\lambda}\cj\alpha_{i_\mathcal{M}',\lambda} c_{i_{\mathcal M},i'_{\mathcal M}}$$
$$=c_{i_{\mathcal M},i'_{\mathcal M}}\sum_{\lambda\in F(i_{\mathcal M},i'_{\mathcal M})}|\alpha_{i_{\mathcal{M}},\lambda}|^2=c_{i_{\mathcal M},i'_{\mathcal M}}.$$
Thus 
$$S=\sum_{\lambda_1\in A}\alpha_{i,\lambda_1}\cj\alpha'_{i',\lambda_1}c_{i\cdot\lambda_1,i'\cdot\lambda_1}+\sum_{\lambda_1\not\in A}\alpha_{i,\lambda_1}\cj\alpha'_{i',\lambda_1}\sum_{\lambda\in F(i\cdot\lambda_1,i'\cdot\lambda_1)}\alpha_{i\cdot\lambda_1,\lambda}\cj\alpha_{i'\cdot\lambda_1,\lambda} c_{i\cdot\lambda_1\lambda,i'\cdot\lambda_1\lambda}$$
$$=\sum_{\lambda_1\in A}\alpha_{i,\lambda_1}\cj\alpha'_{i',\lambda_1}c_{i\cdot\lambda_1,i'\cdot\lambda_1}+\sum_{\lambda_1\not\in A}\sum_{\lambda\in F(i\cdot\lambda_1,i'\cdot\lambda_1)}\alpha_{i,\lambda_1\lambda}\cj\alpha'_{i',\lambda_1\lambda} c_{i\cdot\lambda_1\lambda,i'\cdot\lambda_1\lambda}$$
$$=\sum_{\gamma\in F(i,i')}\alpha_{i,\gamma}\cj\alpha'_{i',\gamma}c_{i\cdot\gamma,i'\cdot\gamma}= T_{i,i'}.$$
This proves \eqref{eq4.4.1}.
\end{proof}

\begin{proof}[Proof of Theorem \ref{th4.12}]
Lemma \ref{lem4.11} shows that the map in \eqref{eq4.12.2} is well defined. Lemma \ref{lem4.10} shows that $T_{i,i'}$ has to be zero on non-balanced minimal sets, and then Theorem \ref{th4.4} shows that the maps are inverse to each other. It is clear that the maps are linear. 
\end{proof}

\begin{definition}\label{def5.9}
    We say that the random walk is {\it connected} if, for any pair of vertices $i,j\in\mathcal V$, there is a possible transition from $i$ to $j$. We say that the random walk is {\it separating}, if, for any pair of distinct vertices $i\neq i'$ in $\mathcal V$, there exists $n\in\bn$ such that, for any vertices $j,j'$ in $\mathcal{V}$ and for any word $\lambda$ of length $|\lambda|=n$, either the transition $i\arr\lambda j$ is not possible or the transition $i'\arr\lambda j'$ is not possible.
\end{definition}

\begin{corollary}\label{cor5.10}
If the random walk is connected and separating, then the Cuntz dilation is irreducible. 
\end{corollary}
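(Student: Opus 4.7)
The plan is to apply Theorem \ref{th4.12} to the random walk paired with itself and combine it with the commutant--fixed point correspondence of Theorem \ref{th1.1}. This identifies the commutant of the Cuntz dilation with $\bc[\mathcal C]$, where $\mathcal C$ selects one representative from each balanced minimal invariant set of $\mathcal V\times\mathcal V$. Since $I_{\mathcal K}\in\mathcal B(\mathcal K)^{\bm\sigma}$, it suffices to show $|\mathcal C|=1$: the commutant will then be one-dimensional and coincide with $\bc\cdot I_{\mathcal H}$, yielding irreducibility.

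The candidate for the unique balanced minimal invariant set is the diagonal $\Delta=\{(i,i):i\in\mathcal V\}$. I would first verify its three properties. Invariance is immediate since $(i,i)\arr{\lambda_1}(j,j')$ possible forces $j=i\cdot\lambda_1=j'$, and an easy induction extends this to arbitrary words. Minimality follows from connectedness: any path $i\arr\lambda j$ in the original graph lifts to a possible transition $(i,i)\arr\lambda(j,j)$ in the product, because the weights on the two coordinates agree when $i=i'$. Balancedness of $\Delta$ is trivial, as both conditions of Definition \ref{def4.9} reduce to tautologies.

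The heart of the argument is showing that no other balanced minimal invariant set exists. Since two distinct minimal invariant sets are disjoint (their intersection would be a smaller invariant subset of each), any second balanced minimal invariant set $\mathcal M$ must contain some pair $(i,i')$ with $i\neq i'$. I would then prove by induction on $|\lambda|$ that $|\alpha_{i,\lambda}|=|\alpha_{i',\lambda}|$ for every word $\lambda$ and every $(i,i')\in\mathcal M$. The base case $|\lambda|=1$ is condition (i) of balancedness; the inductive step uses invariance of $\mathcal M$ to guarantee $(i\cdot\lambda_1,i'\cdot\lambda_1)\in\mathcal M$ whenever $\alpha_{i,\lambda_1}\neq 0$, with the degenerate case $\alpha_{i,\lambda_1}=0$ handled by the fact that balancedness forces $\alpha_{i',\lambda_1}=0$ simultaneously, so both product weights vanish. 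Next, applying the separating hypothesis to $(i,i')$ furnishes an $n$ with $\alpha_{i,\lambda}\cdot\alpha_{i',\lambda}=0$ for every word $\lambda$ of length $n$; by the induction, both factors must vanish, so $\alpha_{i,\lambda}=0$ for all such $\lambda$. This contradicts the standard identity $\sum_{|\lambda|=n}|\alpha_{i,\lambda}|^2=1$ obtained by iterating \eqref{eqde1.3.1}. Hence $\mathcal C=\{(i_\Delta,i_\Delta)\}$, and the corollary follows.

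The main obstacle is the inductive propagation of the balancedness identity to multi-letter words: one has to lift condition (i), which is stated for single labels, to the product weights $\alpha_{i,\lambda}$, while handling the degenerate case where $i\cdot\lambda_1$ fails to exist. The observation that condition (i) forces $\alpha_{i,\lambda_1}$ and $\alpha_{i',\lambda_1}$ to vanish in sync is precisely what makes the induction go through and lets the separating hypothesis trigger the final contradiction.
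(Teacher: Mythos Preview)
Your proposal is correct and follows essentially the same approach as the paper: show that the only balanced minimal invariant set in $\mathcal V\times\mathcal V$ is the diagonal, then invoke Theorem~\ref{th4.12} (and Theorem~\ref{th1.1}) to conclude the commutant is one-dimensional. The paper argues directly that any balanced minimal invariant set contains no off-diagonal pair, whereas you first verify that the diagonal is balanced minimal invariant and then argue by disjointness that a second one would have to be off-diagonal; this is only a cosmetic reorganization. Your explicit induction extending condition~(i) of Definition~\ref{def4.9} from single labels to arbitrary words is exactly the step the paper compresses into the one-line ``Then $\alpha_{i',\lambda}=0$, which contradicts the fact that $\mathcal M$ is balanced,'' so you have simply made that reasoning more visible.
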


\begin{proof}
We prove that there is only one balanced minimal invariant set. 
Let $\mathcal M$ be a balanced minimal invariant set. Suppose there is a pair $(i,i')$ in $\mathcal M$ with $i\neq i'$. Since the random walk is separating there exists $n\in\bn$ such that for every word $\lambda$ with $|\lambda|=n$, either $\alpha_{i,\lambda}=0$ or $\alpha_{i',\lambda}=0$. Since the $|\alpha_{i,\lambda}|^2$ are probabilities, there exists $\lambda$ with $|\lambda|=n$ such that $\alpha_{i,\lambda}\neq 0$. Then $\alpha_{i',\lambda}=0$, which contradicts the fact that $\mathcal M$ is balanced. 

Thus $\mathcal M$ cannot contain pairs $(i,i')$ with $i\neq i'$. Let $(i,i)\in\mathcal M$, since the random walk is connected the orbit of $(i,i)$ is $\{(j,j): j\in\mathcal V\}$. By Proposition \ref{pr4.2}, it follows that $\mathcal M=\{(j,j): j\in\mathcal V\}$. 

Hence, there is only one balanced minimal invariant set, which means, according to Theorem \ref{th4.12}, that the commutant of the Cuntz dilation is one-dimensional so the Cuntz dilation is irreducible. 
\end{proof}

\section{Examples}

\begin{example}\label{ex6.1}
Let $G$ be a finite group and let $\Lambda$ be a set of generators of $G$ in the sense that each element in $G$ is a product of elements of $\Lambda$. Let $N=|\Lambda|$. We consider the Cayley graph of the group: the vertices are the elements of $G$, and, for each $g\in G$, there is an edge from $g$ to $\lambda g$, for all $\lambda\in\Lambda$. We will take the numbers $\alpha_{g,\lambda}=\frac{1}{\sqrt N}$, for all $g\in G$, $\lambda\in\Lambda$. This corresponds to equal probability of transition, for all $\lambda\in\Lambda$. 

Then, the Hilbert space $\mathcal K=l^2(G)$.  The canonical vectors are $\vec g=\delta_g$, $g\in G$. For the row-coisometry, we have $V_\lambda^*\delta_g=\frac{1}{\sqrt N}\delta_{\lambda g}$, which means that $V_\lambda^*=\frac{1}{\sqrt{N}}L(\lambda)$, $\lambda\in\Lambda$, where $(L(g))_{g\in G}$ is the left regular representation of $G$. 

Next we compute the minimal invariant sets. Let $\mathcal M$ be a minimal invariant set. Take $(g_1,g_2)\in \mathcal M$. Then, $\mathcal M=\mathcal O(g_1,g_2)$, by Proposition \ref{pr4.2}(iii). So, for $(x ,y)\in \mathcal M$, there exists a word $\lambda=\lambda_1\dots\lambda_n$ such that $g_1\arr\lambda x $ and $g_2\arr\lambda y$, so $x =\lambda_n\dots\lambda_1 g_1$ and $y=\lambda_n\dots\lambda_1 g_2$. Then $x ^{-1}y=g_1^{-1}g_2$. Conversely, if $x ^{-1}y=g_1^{-1}g_2$, then $yg_2^{-1}=x g_1^{-1}=\lambda_n\dots\lambda_1$ for some elements $\lambda_1,\dots,\lambda_n$ in $\Lambda$. Then $x =\lambda_n\dots\lambda_1g_1$ and $y=\lambda_n\dots\lambda_1g_2$, which means that the transitions $g_1\arr\lambda x $, $g_2\arr\lambda y$ are possible with the word $\lambda_1\dots\lambda_n$. 

Thus each minimal invariant set is of the form 
$$\mathcal M=\{(x,y)\in G\times G : x^{-1}y=g\},$$
for some $g\in G$. Note that the minimal invariant sets form a partition of $G\times G$, corresponding to the right-cosets of the diagonal subgroup in $G\times G$. 

All minimal invariant sets are balanced, because the transition probabilities are equal. 

Next, we compute the elements in $\mathcal B(\mathcal K)^{\bm \sigma}$. Let $T$ in $\mathcal B(\mathcal K)^{\bm \sigma}$. By Lemma \ref{lem4.10}(ii), $T_{x,y}$ is constant on each minimal invariant set. Define, for all $g\in G$, 
$$T(g)_{x,y}= \left\{\begin{array}{cc}
1,&\mbox{ if }x^{-1}y=g,\\
0,&\mbox{ otherwise.}
\end{array},
\right.\quad(x,y\in G).$$
Then $T(g)$ is in $\mathcal B(\mathcal K)^{\bm\sigma}$ and every operator in $\mathcal B(\mathcal K)^{\bm\sigma}$ is a linear combination $\sum_{g\in G}a_g T(g)$. 

Note that $T(g)\delta_a=\delta_{ag^{-1}}$, $a,g\in G$. Therefore $T(g)=R(g)$, where $(R(g))_{g\in G}$ is the right regular representation of $G$. Thus $\mathcal B(\mathcal K)^{\bm\sigma}$ is the linear span of the right regular representation. 

Moving on to the Cuntz dilation, for each $g\in G$, let $A(g)$ be the operator in the commutant of the Cuntz dilation that corresponds to $T(g)$, by the map $A\mapsto P_{\mathcal K}A P_{\mathcal K}=T$. The map is a linear $\ast$ isomorphism. We check that it preserves the product too. 

By Proposition \ref{pr2.4}, for $g_1,g_2\in G$, the operator $A(g_1)A(g_2)$ corresponds to the operator $T(g_1)\ast T(g_2)$ which is the SOT-limit of 
$$\sum_{|\lambda|=n}V_\lambda T(g_1)T(g_2)V_\lambda^*.$$
But $T(g_1)T(g_2)=R(g_1)R(g_2)=R(g_1g_2)=T(g_1g_2)$ which is also in $\mathcal B(\mathcal K)^{\bm\sigma}$. Therefore, 
$$\sum_{|\lambda|=n}V_\lambda T(g_1)T(g_2)V_\lambda^*=T(g_1g_2),$$
so $T(g_1)\ast T(g_2)=T(g_1)T(g_2)=T(g_1g_2)$, which corresponds to $A(g_1g_2)$. So $A(g_1)A(g_2)=A(g_1g_2)$, which means that the isomorphism preserves the product too.

\end{example}


\tikzset{
    to/.style={
        ->,
        thick,
        shorten <= 1pt,
        shorten >= 1pt,},
    from/.style={
        <-,
        thick,
        shorten <= 1pt,
        shorten >= 1pt,}
}

  \begin{example}\label{ex6.2}
    We consider the groups $\Z / M\Z$ for $M \in \N$. As with the Example \ref{ex6.1}, we will take the generators to be $\Lambda = \left\{ +1, -1 \right\} $, with the random walk having equal probabilities and positive $\alpha_{i, \lambda}$. Let $\mathcal{V}$ be the following graph:
    \begin{center}
        \begin{tikzpicture}[scale=0.85]
            \node[shape=circle] (V) at (-1.5,0) {$\mathcal{V} = $};
            \node[shape=circle, draw=black] (0) at (0,0) {$0$};
            \node[shape=circle, draw=black] (1) at (2,0) {$1$};
            \node[shape=circle] (ldots) at (4,0) {$\ldots$};
            \node[shape=circle, draw=black] (M-1) at (6,0) {$M-1$};

            \path (0) edge[to, bend right=45] node[label=below:{+1}] {} (1);
            \path (1) edge[to, bend right=45] node[label=below:{+1}] {} (ldots);
            \path (ldots) edge[to, bend right=45] node[label=below:{+1}] {} (M-1);
            \path (M-1) edge[to, bend left=90] node[label=below:{+1}] {} (0);

            \path (0) edge[from, bend left=45] node[label=above:{-1}] {} (1);
            \path (1) edge[from, bend left=45] node[label=above:{-1}] {} (ldots);
            \path (ldots) edge[from, bend left=45] node[label=above:{-1}] {} (M-1);
            \path (M-1) edge[from, bend right=90] node[label=above:{-1}] {} (0);
        \end{tikzpicture}
    \end{center}
    Via the Example \ref{ex6.1}, we already know that the minimal invariant sets of $\mathcal{V}\times  \mathcal{V}$ are in one-to-one correspondence with the cosets of the diagonal subgroup 
    \[
    H = \left\{ (i,i) : \, i \in \Z / M\Z \right\} \trianglelefteq \Z / M\Z \times \Z / M\Z
    .\] 
    We also want to understand the intertwiners between the graphs of $\Z / M\Z$ and $\Z / N\Z$. To this end, let $\mathcal{V}'$ be the labeled random walk associated to $\Z / N\Z$, with equal probabilities of transition, and positive $\alpha_{i',\lambda}'$. We see that:
    \begin{itemize}
            \item[i.] For each $(i,i') \in  \mathcal{V} \times \mathcal{V}'$, $\mathcal{O} (i,i')$ is a balanced, minimal invariant set.
            \item[ii.] The dimension of the intertwiners is exactly $\operatorname{gcd}(M,N)$.
    \end{itemize}
    \begin{proof}
        (i) A quick calculation shows that $\mathcal{O} (i,i') = \left\{ (i +_{M} k, i' +_{N} k) : k \in \N \right\}$, which is by definition invariant. It is also minimal since for every two elements in the orbit $(l,l'), (j,j') \in \mathcal{O} (i,i')$, the transition $\left( l,l' \right)\to \left( j,j' \right)$ is possible. Further, it follows directly from the assumption that $\alpha_{i, \lambda} = \alpha_{i', \lambda}'$ is constant for all $i, i', \lambda$, that $\mathcal{O}(i,i')$ is also balanced.

        (ii)  Recall that every minimal invariant set can be written as the orbit of some element. Now, consider the product group $\Z / M\Z \times \Z / N\Z$, and the cyclic subgroup $H = \langle (1,1) \rangle$. There is a one-to-one correspondence between elements of the quotient group $(\Z / M\Z \times \Z / N\Z) / H$ and minimal invariant sets (given by $\phi\left( (i,i')H \right) = \mathcal{O} (i,i')$). The cardinality of $H$ is $\operatorname{lcm} \left( M,N \right)$. Thus, the cardinality of the quotient group, and thus the number of minimal invariant sets, is $\frac{MN}{\operatorname{lcm} \left( M,N \right) } = \gcd(M,N)$.
    \end{proof}
		\end{example}

        \begin{example}\label{ex6.3}
We illustrate our theory here with another example. Consider the following graphs:
    \begin{center}
        \begin{tikzpicture}[scale=0.8]
            \node[shape=circle] (mathcalV) at (-6,0) {$\mathcal{V}=$};
            \node[shape=circle, draw=black] (0) at (-5,0) {$0$};
            \node[shape=circle, draw=black] (1) at (180:2) {$1$};
            \node[shape=circle, draw=black] (2) at (060:2) {$2$};
            \node[shape=circle, draw=black] (3) at (300:2) {$3$};
            \node[shape=circle, draw=black] (4) at (-5,-3) {$4$};

            \path (0) edge[to] node[shape=circle, fill=white, minimum size=6mm, label=center:{$\lambda_{3}$}] {} (1);
            \path (1) edge[to, bend left=45] node[shape=circle, fill=white, minimum size=6mm, label=center:{$\lambda_{1}$}] {} (2);
            \path (2) edge[to, bend left=45] node[shape=circle, fill=white, minimum size=6mm, label=center:{$\lambda_{1}$}] {} (3);
            \path (3) edge[to, bend left=45] node[shape=circle, fill=white, minimum size=6mm, label=center:{$\lambda_{2}$}] {} (2);
            \path (3) edge[to, bend left=45] node[shape=circle, fill=white, minimum size=6mm, label=center:{$\lambda_{1}$}] {} (1);
            \path (0) edge[to, bend right=30] node[shape=circle, fill=white, minimum size=6mm, label=center:{$\lambda_{2}$}] {} (4);
            \path (4) edge[->, in=-45, out=45, loop, min distance = 2cm, thick, shorten <= 3pt, shorten >= 3pt] node[shape=circle, fill=white, minimum size=6mm, label=center:{$\lambda_{1}$}] {} (4);
        \end{tikzpicture}\\
        \begin{tikzpicture}[scale=0.8]
            \node[shape=circle] (mathcalV) at (-6,0) {$\mathcal{V}'=$};
            \node[shape=circle, draw=black] (0) at (-5,0) {$0$};
            \node[shape=circle, draw=black] (1) at (180:2) {$1$};
            \node[shape=circle, draw=black] (2) at (060:2) {$2$};
            \node[shape=circle, draw=black] (3) at (300:2) {$3$};
            \node[shape=circle, draw=black] (4) at (-5,-3) {$4$};
            \node[shape=circle, draw=black] (5) at (-2,-3) {$5$};

            \path (0) edge[to] node[shape=circle, fill=white, minimum size=6mm, label=center:{$\lambda_{3}$}] {} (1);
            \path (1) edge[to, bend left=45] node[shape=circle, fill=white, minimum size=6mm, label=center:{$\lambda_{1}$}] {} (2);
            \path (2) edge[to, bend left=45] node[shape=circle, fill=white, minimum size=6mm, label=center:{$\lambda_{1}$}] {} (3);
            \path (3) edge[to, bend left=45] node[shape=circle, fill=white, minimum size=6mm, label=center:{$\lambda_{2}$}] {} (2);
            \path (3) edge[to, bend left=45] node[shape=circle, fill=white, minimum size=6mm, label=center:{$\lambda_{1}$}] {} (1);
            \path (0) edge[to, bend right=30] node[shape=circle, fill=white, minimum size=6mm, label=center:{$\lambda_{2}$}] {} (4);
            \path (5) edge[to, bend right=30] node[shape=circle, fill=white, minimum size=6mm, label=center:{$\lambda_{1}$}] {} (4);
            \path (4) edge[to, bend right=30] node[shape=circle, fill=white, minimum size=6mm, label=center:{$\lambda_{1}$}] {} (5);
        \end{tikzpicture}
    \end{center}

    We will again consider the case where all $\alpha_{i,\lambda}, \alpha_{i', \lambda}'$ are positive, and the probabilities of transition are split evenly between all possible transitions.
    \begin{align*}
        \alpha_{0,\lambda_{1}} = 0\quad &
        \alpha_{0,\lambda_{2}} = \alpha_{0,\lambda_{3}} = \frac{1}{\sqrt{2}}\\
        \alpha_{1,\lambda_{2}} = \alpha_{1,\lambda_{3}} = 0\quad &
        \alpha_{1, \lambda_{1}} = 1\\
        \alpha_{2,\lambda_{2}} = \alpha_{2,\lambda_{3}} = 0\quad &
        \alpha_{2, \lambda_{1}} = 1\\
        \alpha_{3,\lambda_{3}} = 0\quad &
        \alpha_{3,\lambda_{1}} = \alpha_{3,\lambda_{2}} = \frac{1}{\sqrt{2}}\\
        \alpha_{4,\lambda_{2}} = \alpha_{4,\lambda_{3}} = 0\quad &
        \alpha_{4, \lambda_{1}} = 1\\
        \alpha'_{0,\lambda_{1}} = 0\quad &
        \alpha'_{0,\lambda_{2}} = \alpha'_{0,\lambda_{3}} = \frac{1}{\sqrt{2}}\\
        \alpha'_{1,\lambda_{2}} = \alpha'_{1,\lambda_{3}} = 0\quad &
        \alpha'_{1, \lambda_{1}} = 1\\
        \alpha'_{2,\lambda_{2}} = \alpha'_{2,\lambda_{3}} = 0\quad &
        \alpha'_{2, \lambda_{1}} = 1\\
        \alpha'_{3,\lambda_{3}} = 0\quad &
        \alpha'_{3,\lambda_{1}} = \alpha'_{3,\lambda_{2}} = \frac{1}{\sqrt{2}}\\
        \alpha'_{4,\lambda_{2}} = \alpha'_{4,\lambda_{3}} = 0\quad &
        \alpha'_{4, \lambda_{1}} = 1\\
        \alpha'_{5,\lambda_{2}} = \alpha'_{5,\lambda_{3}} = 0\quad &
        \alpha'_{5, \lambda_{1}} = 1\\
    \end{align*}
    To analyze the reducibility of the Cuntz dilation of $\mathcal{V}$, we consider the graph of $\mathcal{V}\times \mathcal{V}$. 
    We have the following orbits on $\mathcal{V}\times \mathcal{V}$:
    \begin{align*}
        &\mathcal{O}\left( 0,0 \right) = \left\{ (i,i) : \, i \in \mathcal{V} \right\}\\
        &\mathcal{O}\left( 1,1 \right) = \left\{ (1,1), (2,2), (3,3) \right\} = \mathcal{O}\left( 2,2 \right) = \mathcal{O}\left( 3,3 \right)\\
        &\mathcal{O}\left( 4,4 \right) = \left\{ (4,4) \right\}\\
        &\mathcal{O}\left( 0,1 \right) = \left\{ (0,1) \right\}\\
        &\mathcal{O}\left( 0,2 \right) = \left\{ (0,2) \right\}\\
        &\mathcal{O}\left( 0,3 \right) = \left\{ (0,3), (4,2), (4,3), (4,1) \right\}\\
        &\mathcal{O}\left( 0,4 \right) = \left\{ (0,4) \right\}\\
        &\mathcal{O}\left( 1,0 \right) = \left\{ (1,0) \right\}\\
        &\mathcal{O}\left( 1,2 \right) = \left\{ (1,2), (2,3),(3,1) \right\} = \mathcal{O}\left( 2,3 \right) = \mathcal{O}\left( 3,1 \right)\\
        &\mathcal{O}\left( 1,3 \right) = \left\{ (1,3), (2,1), (3,2) \right\} = \mathcal{O}\left( 2,1 \right) = \mathcal{O}\left( 3,2 \right)\\
        &\mathcal{O}\left( 1,4 \right) = \left\{ (1,4), (2,4), (3,4) \right\} = \mathcal{O}\left( 2,4 \right) = \mathcal{O}\left( 3,4 \right)\\
        &\mathcal{O}\left( 2,0 \right) = \left\{ (2,0) \right\}\\
        &\mathcal{O}\left( 3,0 \right) = \left\{ (3,0), (2,4), (3,4), (1,4) \right\}\\
        &\mathcal{O}\left( 4,0 \right) = \left\{ (4,0) \right\}\\
        &\mathcal{O}\left( 4,1 \right) = \left\{ (4,1), (4,2), (4,3) \right\} = \mathcal{O}\left( 4,2 \right) = \mathcal{O}\left( 4,3 \right).
    \end{align*}
    Which gives us the following minimal invariant sets:
    \begin{align*}
        &\mathcal{O}\left( 1,1 \right)
        &\mathcal{O}\left( 4,4 \right)\\
        &\mathcal{O}\left( 0,1 \right)
        &\mathcal{O}\left( 0,2 \right)\\
        &\mathcal{O}\left( 0,4 \right)
        &\mathcal{O}\left( 1,0 \right)\\
        &\mathcal{O}\left( 1,2 \right)
        &\mathcal{O}\left( 1,3 \right)\\
        &\mathcal{O}\left( 1,4 \right)
        &\mathcal{O}\left( 2,0 \right)\\
        &\mathcal{O}\left( 4,0 \right)
        &\mathcal{O}\left( 4,1 \right).
    \end{align*}
    Of which only the following are balanced:
    \begin{align*}
        &\mathcal{O}\left( 1,1 \right)
        &\mathcal{O}\left( 4,4 \right),
    \end{align*}
    since all the other minimal invariants do not meet condition (i) of Definition \ref{def4.9}.
    So the commutant of the Cuntz dilation of $\mathcal{V}$ is two-dimensional. Similarly, we can compute the dimension of the commutant of the Cuntz dilation of $\mathcal{V}'$. We see that the orbits are:
    \begin{align*}
        \mathcal{O}\left( 0,0 \right) &= \left\{ (i,i) : \, i \in \mathcal{V} \right\}\\
        \mathcal{O}\left( 1,1 \right) &= \left\{ (1,1), (2,2), (3,3) \right\} = \mathcal{O}\left( 2,2 \right) = \mathcal{O}\left( 3,3 \right)\\
        \mathcal{O}\left( 4,4 \right) &= \left\{ (4,4), (5,5) \right\} = \mathcal{O}\left( 5,5 \right)\\
        \mathcal{O}\left( 0,1 \right) &= \left\{ (0,1) \right\}\\
        \mathcal{O}\left( 0,2 \right) &= \left\{ (0,2) \right\}\\
        \mathcal{O}\left( 0,3 \right) &= \left\{ (0,3), (4,2), (5,3), (4,1), (5,2), (4,3), (5,1) \right\}\\
        \mathcal{O}\left( 0,4 \right) &= \left\{ (0,4) \right\}\\
        \mathcal{O}\left( 0,5 \right) &= \left\{ (0,5) \right\}\\
        \mathcal{O}\left( 1,0 \right) &= \left\{ (1,0) \right\}\\
        \mathcal{O}\left( 1,2 \right) &= \left\{ (1,2), (2,3),(3,1) \right\} = \mathcal{O}\left( 2,3 \right) = \mathcal{O}\left( 3,1 \right)\\
        \mathcal{O}\left( 1,3 \right) &= \left\{ (1,3), (2,1), (3,2) \right\} = \mathcal{O}\left( 2,1 \right) = \mathcal{O}\left( 3,2 \right)\\
        \mathcal{O}\left( 1,4 \right) &= \left\{ (1,4), (2,5), (3,4), (1,5), (2,4), (3,5) \right\} =\\
               &=\mathcal{O}\left( 2,5 \right) = \mathcal{O}\left( 3,4 \right) = \mathcal{O}\left( 1,5 \right) = \mathcal{O}\left( 2,4 \right) = \mathcal{O}\left( 3,5 \right)\\
        \mathcal{O}\left( 2,0 \right) &= \left\{ (2,0) \right\}\\
        \mathcal{O}\left( 3,0 \right) &= \left\{ (3,0), (2,4), (3,5), (1,4), (2,5), (3,4), (1,5) \right\}\\
        \mathcal{O}\left( 4,0 \right) &= \left\{ (4,0) \right\}\\
        \mathcal{O}\left( 4,1 \right) &= \left\{ (4,1), (5,2), (4,3), (5,1), (4,2), (5,3) \right\}\\
               &=\mathcal{O}\left( 5,2 \right) = \mathcal{O}\left( 4,3 \right) = \mathcal{O}\left( 5,1 \right) = \mathcal{O}\left( 4,2 \right) = \mathcal{O}\left( 5,3 \right)\\
        \mathcal{O}\left( 4,5 \right) &= \left\{ (4,5), (5,4) \right\} = \mathcal{O}\left( 5,4 \right).
    \end{align*}
    These yield the following minimal invariant sets:
    \begin{align*}
        \mathcal{O}\left( 1,1 \right) &= \left\{ (1,1), (2,2), (3,3) \right\} = \mathcal{O}\left( 2,2 \right) = \mathcal{O}\left( 3,3 \right)\\
        \mathcal{O}\left( 4,4 \right) &= \left\{ (4,4), (5,5) \right\} = \mathcal{O}\left( 5,5 \right)\\
        \mathcal{O}\left( 0,1 \right) &= \left\{ (0,1) \right\}\\
        \mathcal{O}\left( 0,2 \right) &= \left\{ (0,2) \right\}\\
        \mathcal{O}\left( 0,4 \right) &= \left\{ (0,4) \right\}\\
        \mathcal{O}\left( 0,5 \right) &= \left\{ (0,5) \right\}\\
        \mathcal{O}\left( 1,0 \right) &= \left\{ (1,0) \right\}\\
        \mathcal{O}\left( 1,2 \right) &= \left\{ (1,2), (2,3),(3,1) \right\} = \mathcal{O}\left( 2,3 \right) = \mathcal{O}\left( 3,1 \right)\\
        \mathcal{O}\left( 1,3 \right) &= \left\{ (1,3), (2,1), (3,2) \right\} = \mathcal{O}\left( 2,1 \right) = \mathcal{O}\left( 3,2 \right)\\
        \mathcal{O}\left( 1,4 \right) &= \left\{ (1,4), (2,5), (3,4), (1,5), (2,4), (3,5) \right\} =\\
                &=\mathcal{O}\left( 2,5 \right) = \mathcal{O}\left( 3,4 \right) = \mathcal{O}\left( 1,5 \right) = \mathcal{O}\left( 2,4 \right) = \mathcal{O}\left( 3,5 \right)\\
        \mathcal{O}\left( 2,0 \right) &= \left\{ (2,0) \right\}\\
        \mathcal{O}\left( 4,0 \right) &= \left\{ (4,0) \right\}\\
        \mathcal{O}\left( 4,1 \right) &= \left\{ (4,1), (5,2), (4,3), (5,1), (4,2), (5,3) \right\}\\
                &=\mathcal{O}\left( 5,2 \right) = \mathcal{O}\left( 4,3 \right) = \mathcal{O}\left( 5,1 \right) = \mathcal{O}\left( 4,2 \right) = \mathcal{O}\left( 5,3 \right)\\
        \mathcal{O}\left( 4,5 \right) &= \left\{ (4,5), (5,4) \right\} = \mathcal{O}\left( 5,4 \right).
    \end{align*}
    Of which, the following are balanced:
    \begin{align*}
        &\mathcal{O}\left( 1,1 \right) = \left\{ (1,1), (2,2), (3,3) \right\} = \mathcal{O}\left( 2,2 \right) = \mathcal{O}\left( 3,3 \right)\\
        &\mathcal{O}\left( 4,4 \right) = \left\{ (4,4), (5,5) \right\} = \mathcal{O}\left( 5,5 \right)\\
        &\mathcal{O}\left( 4,5 \right) = \left\{ (4,5), (5,4) \right\} = \mathcal{O}\left( 5,4 \right).
    \end{align*}
    So we have that the Cuntz dilation of $\mathcal{V}'$ has dimension $3$. Further, for any $T \in \mathcal B^{\bm \sigma}$, $T$ has the form:
    \[
        T = \begin{bmatrix} 
            \frac{a+b}{2} & 0 & 0 & 0 & 0 & 0\\
            0 & a & 0 & 0 & 0 & 0\\
            0 & 0 & a & 0 & 0 & 0\\
            0 & 0 & 0 & a & 0 & 0\\
            0 & 0 & 0 & 0 & b & c\\
            0 & 0 & 0 & 0 & c & b\\
        \end{bmatrix}, \quad a,b,c \in \C
    ,\]
    where the entry $T_{0,0} = \sum_{\lambda \in \Lambda} \alpha_{0,\lambda}, \overline{\alpha_{0,\lambda}'}T_{0\cdot\lambda, 0\cdot\lambda} = \frac{1}{2}(T_{1,1} + T_{4,4})$.

    Now, we look at the intertwiners between the Cuntz dilations of $\mathcal{V}$ and $\mathcal{V}'$. By the same process we first identify the balanced, minimal, invariant sets of $\mathcal{V}\times \mathcal{V}'$ as:
    \begin{align*}
        &\mathcal{O}\left( 1,1 \right) = \left\{ (1,1), (2,2), (3,3) \right\} = \mathcal{O}\left( 2,2 \right) = \mathcal{O}\left( 3,3 \right)\\
        &\mathcal{O}\left( 4,4 \right) = \left\{ (4,4), (4,5) \right\} = \mathcal{O}\left( 4,5 \right).
    \end{align*}
    So then the space of intertwiners between the Cuntz dilations of $\mathcal{V}$ and $\mathcal{V}'$ has dimension $2$, and any such intertwiner has the form:
    \[
        T = \begin{bmatrix} 
            \frac{a+b}{2} & 0 & 0 & 0 & 0\\
            0 & a & 0 & 0 & 0\\
            0 & 0 & a & 0 & 0\\
            0 & 0 & 0 & a & 0\\
            0 & 0 & 0 & 0 & b\\
            0 & 0 & 0 & 0 & b\\
        \end{bmatrix}, \quad a,b \in \C
    ,\]
    where the entry $T_{0,0} = \sum_{\lambda \in \Lambda} \alpha_{0,\lambda}, \overline{\alpha_{0,\lambda}'}T_{0\cdot\lambda, 0\cdot\lambda} = \frac{1}{2}(T_{1,1} + T_{4,4})$.

\end{example}

\begin{example}\label{ex6.4}
        This example will show that the phase of the numbers $\alpha_{i,\lambda}$ matters for the reducibility of the Cuntz dilation. 

    We consider the following graph:
    \begin{center}
        \begin{tikzpicture}[scale=0.85]
            \node[shape=circle] (V) at (180:3.5) {$\mathcal{V}=$};
            \node[shape=circle, draw=black] (1) at (000:2) {$1$};
            \node[shape=circle, draw=black] (2) at (120:2) {$2$};
            \node[shape=circle, draw=black] (3) at (240:2) {$3$};

            \path (1) edge[to,  bend right=45] node[label=060:{$\lambda_{1}$}] {} (2);
            \path (2) edge[to,  bend right=45] node[label=180:{$\lambda_{1}$}] {} (3);
            \path (3) edge[to,  bend right=45] node[label=300:{$\lambda_{1}$}] {} (1);
            \path (1) edge[from, bend left=10] node[shape=circle, fill=white, minimum size=6mm, label=center:{$\lambda_{2}$}] {} (2);
            \path (2) edge[from, bend left=10] node[shape=circle, fill=white, minimum size=6mm, label=center:{$\lambda_{2}$}] {} (3);
            \path (3) edge[from, bend left=10] node[shape=circle, fill=white, minimum size=6mm, label=center:{$\lambda_{2}$}] {} (1);
        \end{tikzpicture}
    \end{center}
    with the $\alpha_{i,\lambda}$ defined to be:
    \begin{align*}
        \alpha_{1,\lambda_{1}} = \frac{1}{\sqrt{2} }, \quad\quad &\alpha_{1, \lambda_{2}} = \frac{1}{\sqrt{2} }\\
        \alpha_{2,\lambda_{1}} = \frac{i}{\sqrt{2} }, \quad\quad &\alpha_{2, \lambda_{2}} = \frac{1}{\sqrt{2} }\\
        \alpha_{3,\lambda_{1}} = -\frac{1}{\sqrt{2}}, \quad\quad &\alpha_{2, \lambda_{2}} = \frac{1}{\sqrt{2} }.
    \end{align*}

    We see that the minimal invariant sets of $\mathcal{V}\times \mathcal{V}$ are:
    \begin{align*}
        \mathcal{M}_{1} = \left\{ (1,1), (2,2), (3,3) \right\}\\
        \mathcal{M}_{2} = \left\{ (1,2), (2,3), (3,1) \right\}\\
        \mathcal{M}_{3} = \left\{ (1,3), (2,1), (3,2) \right\} .
    \end{align*}
    So then we look to see if these minimal invariants are balanced. For each minimal invariant set $\mathcal{M}$, take $(i_{\mathcal{M}}, i'_{\mathcal{M}})$ to be the first one in the above set. 

    For each minimal invariant, we clearly have condition (i) since $\left| \alpha_{i, \lambda} \right|  = \left| \alpha_{i', \lambda} \right|$ for all $\lambda\in \Lambda$ and all $i\in \mathcal{V}$. However, for the minimal invariants $\mathcal{M}_{2}$ and $\mathcal{M}_{3}$, condition (ii) does not hold since for the loops $\left( 1,2 \right)\arr{\lambda_1\lambda_2}\left( 1,2 \right)$ and $\left( 1,3 \right)\arr{\lambda_1\lambda_2}\left( 1,3 \right)$:
    \begin{align*}
        &\alpha_{1,\lambda_{1}}\alpha_{1 \cdot \lambda_{1}, \lambda_{2}} = \frac{1}{2}\\
        &\alpha_{2,\lambda_{1}}\alpha_{2 \cdot \lambda_{1}, \lambda_{2}} = \frac{i}{2}\\
        &\alpha_{3,\lambda_{1}}\alpha_{3 \cdot \lambda_{1}, \lambda_{2}} = \frac{-1}{2}.
    \end{align*}

   Therefore, for some arbitrary $T \in \mathcal B^{\bm \sigma}$, we have that
   \begin{align*}
        T_{i,i'} = 0, \quad (i,i') \in \mathcal{M}_{2} \cup \mathcal{M}_{3}\\
        T_{i,i} = T_{j,j}, \quad i,j = 1,2,3.
   \end{align*}
   So $T = cI$ for some $c \in \C$, and the Cuntz dilation of $\mathcal{V}$ is irreducible. 
   
   In contrast, we know from Example \ref{ex6.2}, that if all the $\alpha_{i, \lambda}$ are equal (i.e. we consider the graph of $\mathbb{Z} / 3\mathbb{Z}$ with equal probabilities of transition and real, positive $\alpha_{i, \lambda}$), then the Cuntz dilation is reducible with $\dim B^{\sigma} = 3$. Thus the choice of phase for the $\alpha_{i,\lambda}$ does matter.
\end{example}

\begin{acknowledgements}
We would like to thank professor Deguang Han for very helpful conversations. We would like thank the referee for the suggestions and for pointing out some important references. 
\end{acknowledgements}

\bibliographystyle{alpha}	
\bibliography{eframes}

\begin{thebibliography}{BJKW00}

\bibitem[AJLM13]{MR3024468}
Daniel Alpay, Palle Jorgensen, Izchak Lewkowicz, and Itzik Marziano.
\newblock Representation formulas for {H}ardy space functions through the
  {C}untz relations and new interpolation problems.
\newblock In {\em Multiscale signal analysis and modeling}, pages 161--182.
  Springer, New York, 2013.

\bibitem[AJLV16]{MR3428850}
Daniel Alpay, Palle Jorgensen, Izchak Lewkowicz, and Dan Volok.
\newblock A new realization of rational functions, with applications to linear
  combination interpolation, the {C}untz relations and kernel decompositions.
\newblock {\em Complex Var. Elliptic Equ.}, 61(1):42--54, 2016.

\bibitem[AK08]{AK08}
Mitsuo Abe and Katsunori Kawamura.
\newblock Branching laws for endomorphisms of fermions and the {C}untz algebra
  {$\mathcal O_2$}.
\newblock {\em J. Math. Phys.}, 49(4):043501, 10, 2008.

\bibitem[BEJ00]{MR1743534}
Ola Bratteli, David~E. Evans, and Palle E.~T. Jorgensen.
\newblock Compactly supported wavelets and representations of the {C}untz
  relations.
\newblock {\em Appl. Comput. Harmon. Anal.}, 8(2):166--196, 2000.

\bibitem[BJ02]{BrJo02}
Ola Bratteli and Palle E.~T. Jorgensen.
\newblock Wavelet filters and infinite-dimensional unitary groups.
\newblock In {\em Wavelet analysis and applications ({G}uangzhou, 1999)},
  volume~25 of {\em AMS/IP Stud. Adv. Math.}, pages 35--65. Amer. Math. Soc.,
  Providence, RI, 2002.

\bibitem[BJKW00]{BJK00}
O.~Bratteli, P.~E.~T. Jorgensen, A.~Kishimoto, and R.~F. Werner.
\newblock Pure states on {$\mathcal O_d$}.
\newblock {\em J. Operator Theory}, 43(1):97--143, 2000.

\bibitem[Bur04]{Bur04}
Bernhard Burgstaller.
\newblock Slightly larger than a graph {$C^\ast$}-algebra.
\newblock {\em Israel J. Math.}, 144:1--14, 2004.

\bibitem[Cun77]{Cun77}
Joachim Cuntz.
\newblock Simple {$C\sp*$}-algebras generated by isometries.
\newblock {\em Comm. Math. Phys.}, 57(2):173--185, 1977.

\bibitem[Dav96]{Dav96}
Kenneth~R. Davidson.
\newblock {\em {$C^*$}-algebras by example}, volume~6 of {\em Fields Institute
  Monographs}.
\newblock American Mathematical Society, Providence, RI, 1996.

\bibitem[DHJ09]{DuJo9a}
Dorin~Ervin Dutkay, Deguang Han, and Palle E.~T. Jorgensen.
\newblock Orthogonal exponentials, translations, and {B}ohr completions.
\newblock {\em J. Funct. Anal.}, 257(9):2999--3019, 2009.

\bibitem[DHL19]{DHL19}
Dorin~Ervin Dutkay, John Haussermann, and Chun-Kit Lai.
\newblock Hadamard triples generate self-affine spectral measures.
\newblock {\em Trans. Amer. Math. Soc.}, 371(2):1439--1481, 2019.

\bibitem[DJ06]{DuJo06a}
Dorin~E. Dutkay and Palle E.~T. Jorgensen.
\newblock Wavelets on fractals.
\newblock {\em Rev. Mat. Iberoam.}, 22(1):131--180, 2006.

\bibitem[DJ07a]{DuJo07b}
Dorin~Ervin Dutkay and Palle E.~T. Jorgensen.
\newblock Analysis of orthogonality and of orbits in affine iterated function
  systems.
\newblock {\em Math. Z.}, 256(4):801--823, 2007.

\bibitem[DJ07b]{DuJo07a}
Dorin~Ervin Dutkay and Palle E.~T. Jorgensen.
\newblock Martingales, endomorphisms, and covariant systems of operators in
  {H}ilbert space.
\newblock {\em J. Operator Theory}, 58(2):269--310, 2007.

\bibitem[DJ08]{DuJo08a}
Dorin~Ervin Dutkay and Palle E.~T. Jorgensen.
\newblock Fourier series on fractals: a parallel with wavelet theory.
\newblock In {\em Radon transforms, geometry, and wavelets}, volume 464 of {\em
  Contemp. Math.}, pages 75--101. Amer. Math. Soc., Providence, RI, 2008.

\bibitem[DJ11]{DuJo11a}
Dorin~Ervin Dutkay and Palle E.~T. Jorgensen.
\newblock Affine fractals as boundaries and their harmonic analysis.
\newblock {\em Proc. Amer. Math. Soc.}, 139(9):3291--3305, 2011.

\bibitem[DJ12]{DuJo12}
Dorin~Ervin Dutkay and Palle E.~T. Jorgensen.
\newblock Spectral measures and {C}untz algebras.
\newblock {\em Math. Comp.}, 81(280):2275--2301, 2012.

\bibitem[DKS01]{DKS01}
Kenneth~R. Davidson, David~W. Kribs, and Miron~E. Shpigel.
\newblock Isometric dilations of non-commuting finite rank {$n$}-tuples.
\newblock {\em Canad. J. Math.}, 53(3):506--545, 2001.

\bibitem[DR16]{DR16}
Dorin~Ervin Dutkay and Rajitha Ranasinghe.
\newblock Weighted {F}ourier frames on fractal measures.
\newblock {\em J. Math. Anal. Appl.}, 444(2):1603--1625, 2016.

\bibitem[DR18]{DR18}
Dorin~Ervin Dutkay and Rajitha Ranasinghe.
\newblock Weighted {F}ourier frames on self-affine measures.
\newblock {\em J. Math. Anal. Appl.}, 462(1):1032--1047, 2018.

\bibitem[Dur10]{Dur10}
Rick Durrett.
\newblock {\em Probability: theory and examples}, volume~31 of {\em Cambridge
  Series in Statistical and Probabilistic Mathematics}.
\newblock Cambridge University Press, Cambridge, fourth edition, 2010.

\bibitem[Gli60]{Gli60}
James~G. Glimm.
\newblock On a certain class of operator algebras.
\newblock {\em Trans. Amer. Math. Soc.}, 95:318--340, 1960.

\bibitem[Gli61]{Gli61}
James Glimm.
\newblock Type {I} {$C^{\ast} $}-algebras.
\newblock {\em Ann. of Math. (2)}, 73:572--612, 1961.

\bibitem[GN07]{GN07}
Rostislav Grigorchuk and Volodymyr Nekrashevych.
\newblock Self-similar groups, operator algebras and {S}chur complement.
\newblock {\em J. Mod. Dyn.}, 1(3):323--370, 2007.

\bibitem[Hut81]{Hut81}
John~E. Hutchinson.
\newblock Fractals and self-similarity.
\newblock {\em Indiana Univ. Math. J.}, 30(5):713--747, 1981.

\bibitem[Jor01]{Jor01}
Palle E.~T. Jorgensen.
\newblock Minimality of the data in wavelet filters.
\newblock {\em Adv. Math.}, 159(2):143--228, 2001.
\newblock With an appendix by Brian Treadway.

\bibitem[Jor04]{MR2063755}
Palle E.~T. Jorgensen.
\newblock Closed subspaces which are attractors for representations of the
  {C}untz algebras.
\newblock In {\em Current trends in operator theory and its applications},
  volume 149 of {\em Oper. Theory Adv. Appl.}, pages 223--253. Birkh\"{a}user,
  Basel, 2004.

\bibitem[Jor06a]{Jor06}
Palle E.~T. Jorgensen.
\newblock {\em Analysis and probability: wavelets, signals, fractals}, volume
  234 of {\em Graduate Texts in Mathematics}.
\newblock Springer, New York, 2006.

\bibitem[Jor06b]{MR2277210}
Palle E.~T. Jorgensen.
\newblock Certain representations of the {C}untz relations, and a question on
  wavelets decompositions.
\newblock In {\em Operator theory, operator algebras, and applications}, volume
  414 of {\em Contemp. Math.}, pages 165--188. Amer. Math. Soc., Providence,
  RI, 2006.

\bibitem[JP98]{JP98}
Palle E.~T. Jorgensen and Steen Pedersen.
\newblock Dense analytic subspaces in fractal {$L\sp 2$}-spaces.
\newblock {\em J. Anal. Math.}, 75:185--228, 1998.

\bibitem[JP11]{MR2821778}
P.~E.~T. Jorgensen and A.~M. Paolucci.
\newblock States on the {C}untz algebras and {$p$}-adic random walks.
\newblock {\em J. Aust. Math. Soc.}, 90(2):197--211, 2011.

\bibitem[JT20]{MR4065254}
Palle Jorgensen and James Tian.
\newblock Noncommutative {B}oundaries {A}rising in {D}ynamics and
  {R}epresentations of the {C}untz {R}elations.
\newblock {\em Numer. Funct. Anal. Optim.}, 41(5):571--620, 2020.

\bibitem[Kaw03]{Kaw03}
Katsunori Kawamura.
\newblock Generalized permutative representation of {C}untz algebra. {I}.
  {G}eneralization of cycle type.
\newblock {\em S\=urikaisekikenky\=usho K\=oky\=uroku}, (1300):1--23, 2003.
\newblock The structure of operator algebras and its applications (Japanese)
  (Kyoto, 2002).

\bibitem[Kaw06]{Kaw06}
Katsunori Kawamura.
\newblock Branching laws for polynomial endomorphisms of {C}untz algebras
  arising from permutations.
\newblock {\em Lett. Math. Phys.}, 77(2):111--126, 2006.

\bibitem[Kaw09]{Kaw09}
Katsunori Kawamura.
\newblock Universal fermionization of bosons on permutative representations of
  the {C}untz algebra {${\mathcal O}_2$}.
\newblock {\em J. Math. Phys.}, 50(5):053521, 9, 2009.

\bibitem[KHL09]{KHL09}
Katsunori Kawamura, Yoshiki Hayashi, and Dan Lascu.
\newblock Continued fraction expansions and permutative representations of the
  {C}untz algebra {${\mathcal O}_\infty$}.
\newblock {\em J. Number Theory}, 129(12):3069--3080, 2009.

\bibitem[Pop89]{Pop89}
Gelu Popescu.
\newblock Isometric dilations for infinite sequences of noncommuting operators.
\newblock {\em Trans. Amer. Math. Soc.}, 316(2):523--536, 1989.

\bibitem[PW17]{PiWe17}
Gabriel Picioroaga and Eric~S. Weber.
\newblock Fourier frames for the {C}antor-4 set.
\newblock {\em J. Fourier Anal. Appl.}, 23(2):324--343, 2017.

\bibitem[Str00]{Str00}
Robert~S. Strichartz.
\newblock Mock {F}ourier series and transforms associated with certain {C}antor
  measures.
\newblock {\em J. Anal. Math.}, 81:209--238, 2000.

\end{thebibliography}
\end{document}